\newtheorem{Def}{Definition}[section]
\newtheorem{Th}{Theorem}[section]
\newtheorem{Ex}{Example}[section]
\newtheorem{Lemma}{Lemma}[section]
\newtheorem{Prop}{Proposition}[section]
\newtheorem{Cor}{Corollary}[section]
\newcommand{\br}{\mathbb{R}}
\newcommand{\bn}{\mathbb{N}}
\newcommand{\jc}{\mathcal{J}}
\newcommand{\ic}{\mathcal{I}}
\newcommand{\ac}{\mathcal{A}}
\newcommand{\fc}{\mathcal{F}}
\newcommand{\pc}{\mathcal{P}}
\newcommand{\cs}{\mathcal{S}}
\newcommand{\cu}{\mathcal{U}}
\begin{document}
\title[ On certain generalized notions using $\mathcal{I}$-convergence in topological spaces]
{On certain generalized notions using $\mathcal{I}$-convergence in topological spaces}

\author[  P. Das, U. Samanta, S. Lin]{  Pratulananda Das$^*$, Upasana Samanta$^*$,  Shou Lin$^{\dag}$\ }
\newcommand{\acr}{\newline\indent}
\address{\llap{*\,}Department of Mathematics, Jadavpur University, Kolkata-32, West Bengal, India}
\email{pratulananda@yahoo.co.in, samantaupasana@gmail.com}

\address{\llap{$\dag$\,}Department of Mathematics, Ningde Normal University, Ningde, 352100, Fujian, People’s Republic of China}
\email{shoulin60@163.com}

\thanks{ The first author
is thankful to NBHM for granting the project (sanction no. 02011/9/2022/NBHM(RP)/RD II/10378) during the tenure of which this work was done. }

\subjclass[2010]{Primary: 54A20, 54B15, 54C08 Secondary: 40A05, 26A03 }

\maketitle

\begin{abstract}
In this paper, we consider certain topological properties along with certain types of mappings on these spaces defined by the notion of ideal convergence. In order to do that, we primarily follow in the footsteps of the earlier studies of ideal convergence done by using functions (from an infinite set $S$ to $X$) in \cite{CS, das4, das5}, as that is the most general perspective and use functions instead of sequences/nets/double sequences etc. This functional approach automatically provides the most general settings for such studies and consequently extends and unifies the proofs of several old and recent results in the literature about spaces like sequential, Fr\'{e}chet-Uryshon spaces and sequential, quotient and covering maps. In particular, we introduce and investigate the notions of $\ic$-functional spaces, $\ic$-functional continuous, quotient and covering mappings and finally $\ic$-functional Fr\'{e}chet-Uryshon spaces. In doing so, we take help of certain set theoretic and other properties of ideals.
\end{abstract}
\smallskip
\textbf{ Key words and phrases:} Ideal, ideal convergence of functions, $\ic$-functional space, $\ic$-functional continuous, quotient and covering mappings, $\ic$-functional Fr\'{e}chet-Uryshon space.

\section{\textbf{Introduction}}
\paragraph{}
\vskip .3cm
The idea of statistical convergence of sequence was introduced in \cite{fast,steinhaus} as an extension of the usual notion of convergence. Apart from a lot of investigations in the fields of summability theory, measure theory, functional analysis etc., this idea has led to various investigations in the settings of topological spaces (for example see \cite{lin1,lin2,mk,li,liul,tang,Zhou1,Zhou2}). The most important generalization of almost all types of convergence including statistical convergence had been proposed by Kostyrko et al. \cite{kostyrko2} who had introduced the concepts of $\ic\mbox{-}$convergence and $\ic^{\ast}\mbox{-}$convergence in metric spaces using ideals of the set of all natural
numbers. Following the line of Kostyrko et al., the same has been studied for sequences in general topological spaces \cite{Lahiri1}, for nets in topological and uniform spaces \cite{Lahiri2} (subsequently studied in \cite{das2, das3}) and for functions in topological spaces \cite{CS, das5}, uniform spaces \cite{das4} for example, where other references can be found.\\

On the other hand, it is a well known fact that the topology of a topological space, in general, can not be determined by convergent sequences, unlike metric spaces where sequences play a much more important role in characterizing several notions. From the beginning, it has been a very rich and challenging topic of investigations as to, in which topological spaces sequences play a better role. The first countable, Fr\'{e}chet-Uryshon and sequential spaces are examples of some such spaces that are determined by convergence of sequences \cite{engelking,Lin1}. Instead of usual convergence of sequences, first in \cite{renukadevi,tang} the authors have worked with statistical convergence to define statistical counterparts of Fr\'{e}chet-Uryshon and sequential spaces. Subsequently the more general idea of ideal convergence of sequences has been widely used to introduce these notions as also several other new ideas in topological settings (for example one can see \cite{Zhou1,Zhou2,I-QN,renukadevi, skpal}).

In particular in \cite{Zhou2}, Zhou and his co-authors defined $\ic\mbox{-}$continuous, $\ic\mbox{-}$quotient and $\ic\mbox{-}$covering mappings and checked how they interact with $\ic\mbox{-}$sequential, $\ic\mbox{-}$Fr\'{e}chet spaces.\\

As a natural consequence, in this paper, we further generalize the whole setting of such investigations by considering ideals of an arbitrary infinite set $S$, and as a natural replacement, instead of sequences in $X$ we take functions from $S$ to $X.$ This approach unifies the two directions mentioned above and provides the most general type of results. Primarily we use the idea of $\ic\mbox{-}$convergence of functions to introduce $\ic$-functional open sets, $\ic$-functional closed sets, $\ic\mbox{-}$functional spaces and $\ic$-functional Fr\'{e}chet-Uryshon spaces and establish several properties. We also proceed in the same way to extend the ideas of $\ic\mbox{-}$continuous, $\ic\mbox{-}$quotient and $\ic\mbox{-}$covering mappings and subsequently investigate their counterparts, namely,  $\ic$-functional continuous, quotient and covering mappings and their effects on $\ic$-functional spaces, $\ic$-functional Fr\'{e}chet-Uryshon spaces. In order to clear ambiguity and for the sake of continuity, we call all mappings with domain $S$ ``functions" (continuing the nomenclature of \cite{CS, das4, das5}) and mappings from one topological space to another as just ``mappings".

As a consequence, not only the results of \cite{Zhou1,Zhou2,I-QN,renukadevi, skpal} become special cases of our results, also the whole treatment seems much more simplified, at the same time underscoring the focal point that, several topological concepts can actually be studied without restricting the domain set.\\

\section{\textbf{Preliminaries}}
\vskip .3cm
Let $\bn$ denote the set of all natural numbers and let $K\subset \bn.$ Recall that the \textit{natural} or \textit{asymptotic} density of $K$ is defined by $d(K)=\displaystyle{\lim_{n\longrightarrow \infty}\dfrac{1}{n}|\{k\in \bn:k\leq n\}|}$ if the limit exists. If $X$ is a topological space then a sequence $(x_n:n\in \bn)$ in $X$ is \textit{statistically convergent} to $x\in X$ if for each neighbourhood $U$ of $x$ in $X,~d(\{n\in \bn:x_n\not\in U\})=0$ \cite{mk}.

The notion of statistical convergence has subsequently been extended to the notion of
$\ic\mbox{-}$convergence, which is based on the notion of ideal of subsets of $\bn.$ Let $Y$ be a non-empty set and let $\pc(Y)$ be the family of all subsets of $Y.$  A family $\ic (\subset \pc(Y))$ of subsets of a non-empty set
$Y$ is said to be an \textit{ideal} of $Y$ if $(i)~A,~B\in \ic$
imply $ A\cup B \in \ic ~(ii)~A\in \ic,~B\subset A$ imply
$B\in \ic$, while an \textit{admissible ideal} $\ic$ of $Y$
covers $Y$. Such ideals
are also called free ideals. If $\ic$ is a proper non-trivial ideal
of $Y$ (i.e. $Y\notin \ic,~\ic\neq \emptyset )$, then the
family of sets $\mathcal{F}(\ic)= \{M\subset Y: Y\setminus M
\in \ic\}$ is a \textit{filter} (called the dual filter)  of $Y$
whereas the \textit{coideal} of $\ic$ is $\ic^{+}=\{A\subset Y:A\not\in \ic\}.$
We denote the
ideal consisting of all finite subsets and density zero subsets of
$\bn$ by $\ic_{fin}$ and $\mathcal{I}_{d}$ respectively.

If $\ic$ is a maximal ideal then for any $A\subset S,$ we have
either $A\in \ic$ or $S\setminus A \in \ic.$ For each ideal $\ic$ of $S,$
the set of all maximal ideals $\jc$ of $S$ such that $\ic\subset \jc$
is denoted by $\Theta(\ic).$ It is known that $\ic=\displaystyle{\bigcap_{\jc\in \Theta(\ic)}\jc}$
\cite{givant}. Recall that
$B\subset \bn$ is said to be a \textit{pseudounion} of a family $\ac \subset \pc(\bn)$ if
$\bn\setminus B$ is infinite and $A\setminus B$ is finite for each $A\in \ac$ \cite{I-QN}.\\

A sequence $(x_n:n\in \bn)$ in a topological space $X$ is said to be \textit{$\ic\mbox{-}$convergent}
to $x\in X$ provided for each neighbourhood $U$ of $x,$ the set $\{n\in \bn:x_n\not\in U\}$
belongs to $\ic$ \cite{Lahiri1}. $\ic\mbox{-}$convergence of sequence coincides with
ordinary convergence of sequence if we take $\ic=\ic_{fin}$ and with the
statistical convergence if $\ic=\ic_{d}.$

The concept of $\ic^{*}\mbox{-}$convergence of real sequence arises
from a result of statistical convergence that: a real sequence $(x_n:n\in \bn)$
is statistically convergent to $x$ if and only if there exists a set
$M=\{m_k:k\in \bn\}$ with $m_{1}< m_{2}<\cdots m_{k}\cdots$ such that
$d(M)=1$ and $\displaystyle{\lim_{k\longrightarrow \infty}x_{m_{k}}}=x.$
This idea has been extended to $\ic^{*}\mbox{-}$convergence of a sequence
in a topological space as a sequence $(x_n:n\in \bn)$ in $X$ is \textit{$\ic^{*}\mbox{-}$convergent}
to $x\in X$ if and only if there exists a set $M\in \fc(\ic)$ where
$m_{1}< m_{2}<\cdots< m_{k}<\cdots$ such that $\displaystyle{\lim_{k\longrightarrow \infty}x_{m_{k}}}=x$
\cite{Lahiri1}.

Throughout the paper $X$ stands for a topological space, $S$ an infinite set and
$\ic,$ an admissible ideal of $S$ unless otherwise stated. Further by a ``space" we will always mean a
topological space. Our topological terminology and notation are as in the book
\cite{engelking}.\\

\section{\textbf{$\ic\mbox{-}$functional open sets, $\ic\mbox{-}$functional closed sets and $\ic$\mbox{-}functional space}}

\vskip .3cm

Before we proceed to introduce our main concepts of this  section, we present certain basic observations about convergence of functions which happen to be the main tool behind these generalizations.\\

\begin{Def}
For $x \in X$, we say that a function $f: S \rightarrow X$
\begin{itemize}
\item is \emph{convergent} to $x,$ whenever for every open set $U$ containing $x$, the set $f^{-1}(U)$ is co-finite.
\item is \emph{$\mathcal I$-convergent} to $x,$ whenever for every open set $U$ containing $x$, the set $f^{-1}(U)$ is in $\mathcal{F}(\mathcal{I})$ \cite{CS}.
\item is \emph{$\mathcal{I}^*$-convergent} to $x,$ whenever there is a set $M \in \mathcal{F}(\mathcal{I})$ such that $g$ defined by ``$g(s) = f(s)~ \mbox{if}~ s \in M$ and $g(s) = x$ if $s \notin M$" is convergent to $x$ \cite{CS}.
\end{itemize}
\end{Def}

Suppose $g:S\longrightarrow X,$ is $\ic\mbox{-}$convergent to $x.$ Let $S^{\prime}$ be an infinite subset of $S$ with $|S^{\prime}|=|S|.$ Let $h:S\longrightarrow S^{\prime}$ be a bijective function and let $\Phi=g|_{S^{\prime}}$. Now $\Phi$ is said to be $\ic\mbox{-}$convergent to $x$ if $(\Phi \circ  h)(s)=g(s),~\forall ~s\in S$ is $\ic\mbox{-}$convergent to $x.$

Further if $f:S\longrightarrow X$ is convergent to $x\in X,$ then for any infinite $S^{\prime}\subset S,~f|_{S^{\prime}}$ is convergent to $x.$
In a Hausdorff space $\ic\mbox{-}$limit of a function is unique. For two ideals $\ic\subset \jc$  of $S$, if $f:S\longrightarrow X$ is $\ic\mbox{-}$convergent to $x$ then $f:S\longrightarrow X$ is $\jc\mbox{-}$convergent to $x.$

 Following \cite{I-QN} we can say that an ideal $\ic$ of $S$ has a \textit{pseudounion} if there exists an infinite set $A\subset  S$ with $|S|=|S\setminus A|$ such that $I\setminus A$ is finite for each $I\in \ic.$
\begin{Lemma}
\label{Lm3}
If $\ic$ has a pseudounion and $f:S\longrightarrow X$ is  $\ic\mbox{-}$convergent to $x$ then there exists a function from $S$ to $X$ which is convergent to $x.$
\end{Lemma}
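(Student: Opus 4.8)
The plan is to use the pseudounion $A \subset S$ to ``collapse'' the ideal-level convergence into an honest co-finite convergence. By hypothesis there is an infinite set $A \subset S$ with $|S \setminus A| = |S|$ such that $I \setminus A$ is finite for every $I \in \ic$; equivalently, $A$ meets the complement of every member of the dual filter in only a finite set, so for each $M \in \fc(\ic)$ the set $A \setminus M$ is finite. I would define $g : S \to X$ by ``copying'' $f$ off of $A$ and sending $A$ to the limit point: set $g(s) = x$ for $s \in A$ and $g(s) = f(s)$ for $s \in S \setminus A$. (Since $|S \setminus A| = |S|$, the function $g$ still ``sees'' a full-size copy of the relevant behaviour of $f$, which is morally why this works, though for the bare statement we only need that $g$ is a function from $S$ to $X$.)

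Next I would verify that $g$ is convergent to $x$ in the sense of the first bullet of the Definition, i.e. that $g^{-1}(U)$ is co-finite for every open $U \ni x$. Fix such a $U$. Since $f$ is $\ic$-convergent to $x$, the set $M := f^{-1}(U)$ lies in $\fc(\ic)$, so $S \setminus M \in \ic$, and hence by the pseudounion property $(S \setminus M) \setminus A$ is finite. Now estimate $S \setminus g^{-1}(U)$: a point $s$ fails to satisfy $g(s) \in U$ only if $s \in S \setminus A$ (so that $g(s) = f(s)$) and $f(s) \notin U$; that is, $S \setminus g^{-1}(U) \subset (S \setminus A) \cap (S \setminus M) = (S \setminus M) \setminus A$, which is finite. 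Therefore $g^{-1}(U)$ is co-finite, so $g$ is convergent to $x$, which is exactly the function we sought.

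I do not expect any serious obstacle here; the only thing to be slightly careful about is the direction of the pseudounion condition — one must use that $I \setminus A$ is finite for $I \in \ic$ (applied to $I = S \setminus M \in \ic$), rather than something about $A \setminus I$. The condition $|S \setminus A| = |S|$ is what guarantees $S \setminus A$ is infinite (indeed of full cardinality), but it plays no role in the finiteness computation above; it is recorded in the definition of pseudounion so that the construction is non-degenerate and so that $g$ genuinely carries the information of $f$ on a large set, which matters for later results. Thus the whole argument reduces to the one-line containment $S \setminus g^{-1}(U) \subset (S \setminus M) \setminus A$ together with the defining property of the pseudounion.
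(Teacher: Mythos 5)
Your proof is correct and takes essentially the same route as the paper: you construct exactly the paper's function $\Phi$ (equal to $f$ off the pseudounion $A$ and constantly $x$ on $A$) and run the same finiteness computation $S\setminus g^{-1}(U)\subset (S\setminus M)\setminus A$ with $S\setminus M\in\ic$. One small caution: your early ``equivalently'' remark that $A\setminus M$ is finite for each $M\in\fc(\ic)$ is stated backwards (the pseudounion property gives $(S\setminus M)\setminus A$ finite, not $A\cap(S\setminus M)$ finite), but since you never use that version and correctly apply the right direction in the actual verification, the argument stands.
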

\begin{proof}
Let $f:S\longrightarrow X$ be $\ic\mbox{-}$convergent to $x.$ Since $\ic$ has a pseudounion, there exists an infinite set $A\subset S$ with $S\setminus A\in \ic^{+}$ such that $I\cap (S\setminus A)$ is finite for each $I\in \ic.$ As $f$ is $\ic\mbox{-}$convergent to $x,$ for every open set $O$ containing $x,~A_{O}=\{s\in S:f(s)\not\in O\}\in \ic.$ Thus $A_{O}\cap (S\setminus A)$ is finite. Then $\Phi:S\longrightarrow X$ defined by $\Phi(s)=f(s)$ if $s\in S\setminus A$ and $\Phi(s)=x$ if $s\in A,$  is convergent to $x.$
\end{proof}

\begin{Prop}
\label{Pr1}
Let $g: S \rightarrow X$ be given. Then $g$ is $\ic\mbox{-}$convergent to $x$ if and only if $g$ is $\jc\mbox{-}$convergent to $x$ for each $\jc\in \Theta(\ic).$
\end{Prop}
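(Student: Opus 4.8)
The plan is to reduce everything to the set-theoretic decomposition $\ic=\displaystyle{\bigcap_{\jc\in\Theta(\ic)}\jc}$ recorded in the preliminaries, combined with the already-noted monotonicity of $\ic\mbox{-}$convergence under enlargement of the ideal: if $\ic\subset\jc$ and $g$ is $\ic\mbox{-}$convergent to $x$, then $g$ is $\jc\mbox{-}$convergent to $x$. Since $\ic$ is admissible, hence proper, the collection $\Theta(\ic)$ is non-empty and the displayed identity is genuinely applicable; this is the only background fact that needs to be in place before the two implications become routine.

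For the forward implication, assume $g$ is $\ic\mbox{-}$convergent to $x$ and fix an arbitrary $\jc\in\Theta(\ic)$. By definition of $\Theta(\ic)$ we have $\ic\subset\jc$, so the monotonicity observation above immediately yields that $g$ is $\jc\mbox{-}$convergent to $x$. As $\jc$ was arbitrary, this direction is complete.

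For the converse, suppose $g$ is $\jc\mbox{-}$convergent to $x$ for every $\jc\in\Theta(\ic)$. Fix an open set $U$ containing $x$ and put $A_U=\{s\in S:g(s)\notin U\}=S\setminus g^{-1}(U)$. For each $\jc\in\Theta(\ic)$, the $\jc\mbox{-}$convergence of $g$ means $g^{-1}(U)\in\fc(\jc)$, that is, $A_U\in\jc$. Hence $A_U$ lies in $\displaystyle{\bigcap_{\jc\in\Theta(\ic)}\jc}=\ic$, so $g^{-1}(U)\in\fc(\ic)$. Since $U$ was an arbitrary open neighbourhood of $x$, $g$ is $\ic\mbox{-}$convergent to $x$.

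I do not anticipate any real obstacle here: both implications are direct once the definition of $\ic\mbox{-}$convergence of a function is read off as membership of the ``bad set'' $A_U$ in the relevant ideal. The only point deserving a line of care is making sure $\Theta(\ic)\neq\emptyset$ and that the intersection identity $\ic=\bigcap_{\jc\in\Theta(\ic)}\jc$ really holds, which is exactly what properness (guaranteed by admissibility of $\ic$) provides; everything else is a short unwinding of the definitions.
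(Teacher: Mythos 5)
Your proof is correct, and since the paper states this proposition without proof, the intended argument is evidently the one you give: the forward direction is the monotonicity remark from Section 3, and the converse is an immediate unwinding of the identity $\ic=\bigcap_{\jc\in\Theta(\ic)}\jc$ recorded in the preliminaries, applied to the set $A_U=\{s\in S: g(s)\notin U\}$. Your side remark about needing $\Theta(\ic)\neq\emptyset$ and properness of $\ic$ is the right point of care, and nothing further is required.
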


\begin{Ex}
\label{Ex1}
Let $\ic$ be an ideal of $S.$ Take $\infty \not\in S.$ We define a topology on $S\cup \{\infty\}$ by considering each $s\in S$ isolated and each basic open neighbourhood $U$ of $\infty$ as $(S\setminus I)\cup \{\infty\}$ for some $I\in \ic.$ This space is denoted by $\sum_{S}(\ic).$
Clearly the inclusion mapping $i:S\longrightarrow \sum_{S}(\ic)$ is $\ic\mbox{-}$convergent to $\infty.$ Note that if $\ic\not=\ic_{fin},$ then $\ic$ contains an infinite set $I$ `say'. Then it readily follows that the inclusion function is not convergent to $\infty$ in the usual sense.
\end{Ex}

Let us now look back at the history as to how the notion of closed sets in topological spaces have been generalized using sequences. Recall that a subset $F\subset X$ is called \textit{sequentially closed} if for each sequence $(x_n:n\in \bn)$ in $F$ converging to $x\in X,$ we have $x\in F.$ $X$ is called a \textit{sequential space} \cite{franklin1} if each sequentially closed subset of $X$ is closed. A subset $U\subset X$ is called
\textit{sequentially open} if $X\setminus U$ is sequentially closed. Di Maio and Ko\v{c}inac introduced statistical version of sequential space in \cite{mk} while Pal \cite{skpal} further extended it to $\ic\mbox{-}$sequential spaces. Very recently Zhou et al. revisited the notion of $\ic\mbox{-}$sequential space in \cite{Zhou2} where following notions were introduced. A subset $F\subset X$ is called \textit{$\ic\mbox{-}$closed} if for each sequence $(x_n:n\in \bn)$ in $F,~\ic\mbox{-}$convergent to $x\in X,$ we have $x\in F.$ A subset $U\subset X$ is called \textit{$\ic\mbox{-}$open} if $X\setminus U$ is $\ic\mbox{-}$closed. $X$ is called an \textit{$\ic\mbox{-}$sequential space} if each $\ic\mbox{-}$closed subset of $X$ is closed. Motivated by the generalization of $\ic\mbox{-}$sequential spaces from the idea of sequential spaces,  we now introduce the main concept of this section.
\begin{Def}
\label{D1}
\noindent Let $X$ be a topological space. $(i)$ $F \subset X$ is said to be \textit{$\ic\mbox{-}$functional closed} if for each function $g:S\rightarrow F$ that is $\ic$-convergent to $x\in X$ we have $x\in F.$\\
\noindent $(ii)$ $U \subset X$ is said to be \textit{$\ic\mbox{-}$functional open} if $X\setminus U$ is $\ic$\mbox{-}functional closed.\\
\noindent $(iii)$ $X$ is called an \textit{$\ic\mbox{-}$functional space} if each $\ic\mbox{-}$functional closed subset of $X$ is closed.
\end{Def}
If we consider ``usual" convergence of functions (see Definition 3.1) instead of $\ic\mbox{-}$convergence, we call $\ic\mbox{-}$functional closed sets, $\ic\mbox{-}$functional open sets and $\ic\mbox{-}$functional spaces as functional closed, functional open and functional spaces respectively.
Clearly, every $\ic\mbox{-}$functional closed set is functional closed but the following example shows that the converse is not generally true.
\begin{Ex}
\label{E3}
Let $\ic$ be a maximal ideal of $S.$ We consider the space $\sum_{S}(\ic)$ as in Example \ref{Ex1}. Then $S$ is a functional closed set in $\sum_{S}(\ic)$ but not $\ic\mbox{-}$functional closed.
\end{Ex}
As an immediate consequence of Lemma \ref{Lm3} we can see that
\begin{Prop}
\label{Pr2}
$A\subset X$ is $\ic\mbox{-}$functional closed if and only if $A$ is functional closed provided $\ic$ has a pseudounion. Therefore $X$ is an $\ic\mbox{-}$functional space if and only if $X$ is a functional space provided $\ic$ has a pseudounion.
\end{Prop}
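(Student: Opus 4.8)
The plan is to deduce everything directly from Lemma \ref{Lm3} together with the trivial observation that a convergent function is $\ic$-convergent (since every cofinite set lies in $\fc(\ic)$ for an admissible ideal). We argue the contrapositive-flavoured equivalence for the notion of functional closedness and then transfer it to functional spaces by complementation and definition-chasing.

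First I would prove the forward implication, which holds without any hypothesis on $\ic$: if $A\subset X$ is $\ic$-functional closed, then since every function $g:S\to A$ that is convergent to some $x\in X$ is automatically $\ic$-convergent to $x$, the defining condition of $\ic$-functional closedness forces $x\in A$; hence $A$ is functional closed. (This is exactly the remark already made in the excerpt immediately before Example \ref{E3}, so it can be cited rather than reproved.) For the reverse implication I would use the pseudounion hypothesis. Suppose $A$ is functional closed and let $g:S\to A$ be $\ic$-convergent to $x\in X$. Applying Lemma \ref{Lm3} to $g$ — noting that its range lies in $A$ — I obtain the function $\Phi:S\to X$ built in the proof of that lemma, namely $\Phi(s)=g(s)$ for $s\in S\setminus A'$ and $\Phi(s)=x$ for $s\in A'$, where $A'$ is the pseudounion witness; this $\Phi$ is convergent to $x$. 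The one small point to check is that $\Phi$ actually maps into $A$: for $s\notin A'$ we have $\Phi(s)=g(s)\in A$, and for $s\in A'$ we have $\Phi(s)=x$, so we need $x\in A$ — but that is precisely what we are trying to conclude, so this naive reading is circular. The correct fix is to observe that $\Phi$ maps $S\setminus A'$ into $A$ and that $S\setminus A'$ is infinite; restricting $\Phi$ (equivalently $g$) to $S\setminus A'$ gives a function into $A$, and I would instead invoke the notion of $\ic$-convergence of a restriction described in the paragraph after Definition 3.1, or simply redefine $\Phi$ to take on some fixed value of $A$ (e.g. $g(s_0)$ for a chosen $s_0$) on $A'$ — since changing $\Phi$ on a set in $\ic_{fin}\subset\fc(\ic)^c$... more precisely on the set $A'$, which need not be small. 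This is the main obstacle and I address it below.

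The clean way around the obstacle is to not use $\Phi$ as a function into $X$ but to extract an honest convergent function into $A$. Because $\ic$ has a pseudounion, the set $S\setminus A'$ is infinite with $|S\setminus A'|=|S|$, and for each open $O\ni x$ the set $\{s\in S\setminus A': g(s)\notin O\}=A_O\cap(S\setminus A')$ is finite (as in the proof of Lemma \ref{Lm3}). Hence $g|_{S\setminus A'}:S\setminus A'\to A$ is a convergent (in the cofinite sense, relative to $S\setminus A'$) function into $A$; composing with a bijection $S\to S\setminus A'$ (legitimate since the cardinalities agree) yields a genuine function $S\to A$ convergent to $x$. Since $A$ is functional closed, $x\in A$. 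This establishes that $A$ is $\ic$-functional closed, completing the equivalence.

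Finally, for the statement about spaces: $U$ is $\ic$-functional open iff $X\setminus U$ is $\ic$-functional closed iff (by what was just shown) $X\setminus U$ is functional closed iff $U$ is functional open; and $X$ is an $\ic$-functional space iff every $\ic$-functional closed set is closed iff every functional closed set is closed iff $X$ is a functional space — each ``iff'' being either the equivalence just proved or a direct unwinding of Definition \ref{D1}. I expect the only genuinely delicate point in the whole argument to be the cardinality/restriction bookkeeping in the reverse direction, i.e. making precise the passage from $g$ to a convergent function whose range is contained in $A$ rather than merely in $X$; everything else is formal.
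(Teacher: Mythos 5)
Your proposal is correct and follows the route the paper intends: the paper gives no written proof, presenting the proposition as ``an immediate consequence of Lemma \ref{Lm3}.'' You are right, though, that the consequence is not literally immediate: the convergent function $\Phi$ built in the proof of Lemma \ref{Lm3} takes the value $x$ on the pseudounion set and so need not map into $A$, which means a verbatim application of the lemma to deduce $x\in A$ from functional closedness of $A$ would be circular, exactly as you observe. Your repair --- restricting $g$ to $S\setminus A'$, where it already lands in $A$ and misses each neighbourhood of $x$ only finitely often, and reindexing by a bijection $S\to S\setminus A'$ (available because $|S\setminus A'|=|S|$ is built into the paper's definition of pseudounion) --- is the right way to make this rigorous, and it matches the paper's own convention for convergence of restrictions stated after Definition 3.1. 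The forward implication and the transfer to $\ic$-functional open sets and $\ic$-functional spaces are routine, as you say; the only cosmetic issue is the abandoned intermediate attempt in your second paragraph, which should simply be deleted in a final write-up.
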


We can modify Definition \ref{D1}$(ii)$ in the following way.
\begin{Lemma}
\label{Lm5}
A subset $O$ of $X$ is $\ic\mbox{-}$functional open if and only if no function $h: S\longrightarrow X\setminus O$ is $\ic\mbox{-}$convergent to a point in $O.$
\end{Lemma}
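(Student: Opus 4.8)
The plan is simply to unwind Definition \ref{D1} and take the contrapositive, since there is essentially no computation involved. By Definition \ref{D1}$(ii)$, $O$ is $\ic\mbox{-}$functional open precisely when $X\setminus O$ is $\ic\mbox{-}$functional closed, and by Definition \ref{D1}$(i)$ the latter means: every function $g:S\longrightarrow X\setminus O$ that is $\ic\mbox{-}$convergent to some $x\in X$ satisfies $x\in X\setminus O$.

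The only step to carry out, then, is to negate this implication correctly. Its failure amounts to the existence of a function $g:S\longrightarrow X\setminus O$ and a point $x\in X$ such that $g$ is $\ic\mbox{-}$convergent to $x$ but $x\notin X\setminus O$, i.e. $x\in O$. Hence $X\setminus O$ \emph{is} $\ic\mbox{-}$functional closed if and only if no function $h:S\longrightarrow X\setminus O$ is $\ic\mbox{-}$convergent to a point of $O$, which is exactly the asserted characterization of $O$ being $\ic\mbox{-}$functional open.

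The one point I would be careful about (though it is not really an obstacle) is keeping track of the codomain: in Definition \ref{D1}$(i)$ the function ranges into the closed set itself (here $X\setminus O$), while $\ic\mbox{-}$convergence is tested against neighbourhoods in the ambient space $X$; both features are preserved verbatim under the negation, so no reformulation is needed. I would write the argument as a short chain of ``if and only if'' statements linking $O$ $\ic\mbox{-}$functional open $\Longleftrightarrow$ $X\setminus O$ $\ic\mbox{-}$functional closed $\Longleftrightarrow$ (no witnessing $h$ exists), with no further ingredients required.
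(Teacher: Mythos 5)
Your proposal is correct and follows the same route as the paper: both simply unwind Definition \ref{D1}$(i),(ii)$ and observe that the stated condition is the (correctly negated) contrapositive of ``$X\setminus O$ is $\ic$-functional closed.'' Your version is in fact slightly more careful about the logical bookkeeping than the paper's two-line argument, and nothing further is needed.
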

\begin{proof}
Sufficiency directly follows from Definition \ref{D1}$(i),(ii)$. As $O$ is $\ic\mbox{-}$functional open so $X\setminus O$ is $\ic\mbox{-}$functional closed. Hence for every function $h: S\longrightarrow X\setminus O$ which is $\ic\mbox{-}$convergent to $x,$ we must have $x\in X\setminus O.$
\end{proof}
It is evident that every open set (and so every closed set) is $\ic\mbox{-}$functional open ($\ic\mbox{-}$functional closed). Following example establishes the existence of a space which is not $\ic\mbox{-}$functional.
\begin{Ex}
\label{E4}
Consider the Cartesian product $S\times S.$ For $a\in S,$ we call the subset $S\times \{a\}$ as the $a$-th row of $S\times S.$ Let $\ic$ have a pseudounion and let $\infty$ be an element outside $S\times S.$ Let $X=(S\times S)\cup \{\infty\}.$ We define a topology on $X$ as follows. Let $\tau_{1}=\pc(S\times S)$  and let $\tau_{2}$ be the collection of those subsets $A$ of $X$ so that $\infty\in A$ and $\{a\in S:(\{s\in S:(s,a)\in A\}\in \fc(\ic))\}\in \fc(\ic).$ 
Take $\tau=\tau_{1}\cup \tau_{2}.$ Then it can be verified that $\tau$ is a topology on $X.$

No function from $S$ to $X$ can be $\ic\mbox{-}$convergent to $\infty.$ If $g:S \longrightarrow X$ is $\ic\mbox{-}$convergent to $\infty$ then by Lemma \ref{Lm3} there is a function $f:S\longrightarrow X$ which is convergent to $\infty.$ Note that each row contains at most finitely many elements of the form $f(s).$ Excluding these terms from each row, we obtain an open set containing $\infty$ which contains no terms of the form $f(s).$ Also no function from $S$ to $S\times S$ can be $\ic\mbox{-}$convergent to a point of $S\times S$ unless it is eventually constant. But $\infty$ is a limit point of $S \times S.$ Hence $S\times S$ is $\ic\mbox{-}$functional closed but not closed and therefore $X$ is not $\ic\mbox{-}$functional.
\end{Ex}
However there exists an ideal for which every sequential space is $\ic\mbox{-}$functional.
\begin{Prop}
\label{Pr3}
Let $S=\bigcup_{i\in \bn} S_{i}$ such that $S_i\cap S_{j}=\emptyset$ for different $i,j$ and let $\ic_0 = \{ A\subset S :~ A\cap S_i\not=\emptyset~\mbox{ for finitely many}~ i\}.$  Then every sequential space $X$ is an $\ic_0$-functional space.
\end{Prop}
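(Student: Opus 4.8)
The plan is to use the defining property of a sequential space: in such a space a set is closed as soon as it is sequentially closed, so it suffices to show that every $\ic_0$-functional closed set $F\subseteq X$ is sequentially closed. I would first record the trivial facts that $\ic_0$ is an admissible ideal of $S$ (each singleton meets exactly one block $S_i$) and that, by its very definition, a subset of $S$ lies in $\ic_0$ precisely when it is contained in a finite union of the blocks $S_i$.

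So fix an $\ic_0$-functional closed $F\subseteq X$ and a sequence $(x_n:n\in\bn)$ in $F$ with $x_n\to x$ in $X$; the goal is $x\in F$. The key move is to transfer this sequence to a function on $S$, using the partition $S=\bigcup_{i\in\bn}S_i$ whose blocks are indexed by the same $\bn$ that indexes the sequence: define $g:S\to X$ by $g(s)=x_i$ whenever $s\in S_i$. Since each $x_i\in F$, this is a function $g:S\to F$.

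Next I would check that $g$ is $\ic_0$-convergent to $x$. Given an open set $U$ containing $x$, ordinary convergence $x_n\to x$ yields an $N$ with $x_i\in U$ for all $i\ge N$; hence $\{s\in S:g(s)\notin U\}\subseteq\bigcup_{i<N}S_i\in\ic_0$. As $U$ was arbitrary, $g$ is $\ic_0$-convergent to $x$, and since $F$ is $\ic_0$-functional closed this forces $x\in F$. Thus $F$ is sequentially closed, hence closed, and so $X$ is an $\ic_0$-functional space.

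I do not anticipate a genuine obstacle here: the point is that $\ic_0$ is engineered so that ``meets only finitely many blocks'' is its smallness condition, which matches exactly the word ``eventually'' for sequences once one identifies the indexing $\bn$ of the partition with the indexing $\bn$ of the sequence; the only thing to be careful about is keeping $g$ valued in $F$, which is automatic here since $g$ takes only the values $x_i$.
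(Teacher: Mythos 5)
Your proof is correct and is essentially the paper's own argument: the paper phrases it contrapositively (an $\ic_0$-functional open $O$ that is not open yields a sequence in $X\setminus O$ converging into $O$, which is then spread over the blocks via $g(s)=x_i$ for $s\in S_i$), while you show directly that $\ic_0$-functional closed sets are sequentially closed using the very same function $g$. The key construction and the observation that membership in $\ic_0$ matches ``eventually'' for sequences are identical.
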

\begin{proof}
Let $O\subset X$ be $\ic\mbox{-}$functional open. If $O$ is not open then there is a sequence $x_n\in X\setminus O$ converging to $x\in O.$ Define a function $g:S\longrightarrow X\setminus O$ by $g(s)=x_i$ if $s\in S_i.$ Then $g$ is $\ic\mbox{-}$convergent to $x\in O.$ Hence $X\setminus O$ is not $\ic\mbox{-}$functional closed, which is a contradiction.
\end{proof}

\begin{Prop}
\label{Pr4}
The following are equivalent for any $A\subset X.$\\
\noindent $(i)$ $A\subset X$ is $\ic\mbox{-}$functional open\\
\noindent $(ii)$ For any function $g:S\longrightarrow X$ which is $\ic\mbox{-}$convergent to $x\in A,$ we have $\{s\in S:g(s)\in A\}\in \ic^{+}.$ \\
\noindent $(iii)$ $|\{s\in S:g(s)\in A\}|\geq\omega$ for each function $g:S\longrightarrow X$ which is $\ic\mbox{-}$convergent to $x\in A.$
\end{Prop}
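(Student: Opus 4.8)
The plan is to prove the cyclic chain $(i)\Rightarrow(ii)\Rightarrow(iii)\Rightarrow(i)$, using Lemma~\ref{Lm5} throughout to translate ``$\ic$-functional open'' into the statement that no function $S\to X\setminus A$ is $\ic$-convergent to a point of $A$.

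First I would handle $(i)\Rightarrow(ii)$, which is where essentially all the work lies. Let $g:S\to X$ be $\ic$-convergent to some $x\in A$. The case $A=X$ is trivial: then $\{s:g(s)\in A\}=S\notin\ic$ because $\ic$ is a proper ideal. So assume $A\neq X$ and fix a point $p\in X\setminus A$. Arguing by contradiction, suppose $\{s:g(s)\in A\}\in\ic$, i.e.\ $B:=\{s\in S:g(s)\notin A\}\in\fc(\ic)$. Define $h:S\to X\setminus A$ by $h(s)=g(s)$ for $s\in B$ and $h(s)=p$ for $s\notin B$. For every open $U\ni x$ one has $h^{-1}(U)\supseteq g^{-1}(U)\cap B$, and both $g^{-1}(U)$ (since $g$ is $\ic$-convergent to $x$) and $B$ belong to the filter $\fc(\ic)$; hence $h^{-1}(U)\in\fc(\ic)$. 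Thus $h$ is a function from $S$ into $X\setminus A$ that is $\ic$-convergent to $x\in A$, contradicting Lemma~\ref{Lm5}. Therefore $\{s:g(s)\in A\}\in\ic^{+}$.

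Next, $(ii)\Rightarrow(iii)$ is immediate from admissibility of $\ic$: every finite subset of $S$ lies in $\ic$, so any member of $\ic^{+}$ is infinite, i.e.\ has cardinality $\geq\omega$. Finally, for $(iii)\Rightarrow(i)$ I would again invoke Lemma~\ref{Lm5}: if some $h:S\to X\setminus A$ were $\ic$-convergent to a point $x\in A$, then applying $(iii)$ to $h$ (viewed as a function $S\to X$) would force $\{s:h(s)\in A\}$ to be infinite, whereas it is empty since $h$ avoids $A$, a contradiction; hence no such $h$ exists and $A$ is $\ic$-functional open.

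The only delicate point is the construction and the convergence check for the auxiliary function $h$ in $(i)\Rightarrow(ii)$ (in particular, remembering to rule out $A=X$ separately so that the point $p$ exists); the remaining two implications are purely formal once Lemma~\ref{Lm5} and the admissibility of $\ic$ are on hand.
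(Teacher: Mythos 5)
Your proposal is correct and follows essentially the same route as the paper: the same auxiliary function $h$ (your $B$ is the complement of the paper's set $C$), the same filter argument for its $\ic$-convergence, and the same formal deductions for the remaining two implications. The only difference is cosmetic — you invoke Lemma~\ref{Lm5} explicitly where the paper uses Definition~\ref{D1} directly, and you separately dispose of the case $A=X$ (a point the paper glosses over when fixing $a\in X\setminus A$).
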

\begin{proof}
\noindent $(i)\Longrightarrow (ii)$ Let $A\subset X$ be $\ic\mbox{-}$functional open and let $g:S\longrightarrow X$ be $\ic\mbox{-}$convergent to $x\in A.$ If possible let $C=\{s\in S:g(s)\in A\}\in \ic.$ Fix an element $a\in X\setminus A.$ Define a function $h:S\longrightarrow X\setminus A$ by $h(s)=g(s)$ for $s\in S\setminus C$ and $h(s)=
a$ if $s\in C.$ Let $U$ be a neighbourhood of $x.$ Then $\{s\in S:g(s)\in U\}\cap S\setminus C \subset \{s\in S:h(s)\in U\}\in \fc(\ic).$ Thus $h:S\longrightarrow X\setminus A$ is $\ic\mbox{-}$convergent to $x,$ this contradicts that $A$ is $\ic\mbox{-}$functional open. Thus $(ii)$ holds.\\
\noindent $(ii)\Longrightarrow (iii)$ As $\ic$ is an admissible ideal, thus $(iii)$ holds.\\
\noindent $(iii)\Longrightarrow (i)$ If possible let $A\subset X$ be not $\ic\mbox{-}$functional open. So $X\setminus A$ is not $\ic\mbox{-}$functional closed. Therefore there is a function $g:S\longrightarrow X\setminus A$ which is $\ic\mbox{-}$convergent to $x\in A$ and evidently $\{s\in S:g(s)\in A\} = \emptyset$ which contradicts $(iii).$
\end{proof}


\begin{Ex}
\label{E5}
Let $\ic$ be a maximal ideal of $S$ and let $g:S\longrightarrow X$ be $\ic\mbox{-}$convergent to $x.$ Let $Y=\{g(s):s\in S\}\cup \{x\}.$ Endow $\{g(s):s\in S\} \subset Y$ with the discrete topology and let a basic neighbourhood of $x$ be of the form $\{x\}\cup
\{g(s):s\in A\}$ for some $A\in \fc(\ic).$ $Y$ endowed with this topology is an $\ic\mbox{-}$functional space. To prove that, let $U$ be $\ic\mbox{-}$functional open in $Y.$ Without any loss of generality assume that $x\in U.$ As $\ic$ is maximal, by Proposition \ref{Pr4}(ii), we have $\{s\in S:g(s)\in U\}\in \fc(\ic).$ Hence $\{x\}\cup \{g(s)\in U\}\subset U$ which implies that $U$ is open in $Y.$
\end{Ex}

\begin{Lemma}
\label{Lm6}
Let $X=\displaystyle{\prod_{i\in \Lambda}X_{i}}$ have the product topology. Then a function $f:S\longrightarrow X$ is $\ic\mbox{-}$convergent to $x=(x_i)$ if and only if $\pi_{i}\circ f$ is $\ic\mbox{-}$convergent to $x_i$ for each $i\in \Lambda.$
\end{Lemma}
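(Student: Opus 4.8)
The plan is to reduce $\ic\mbox{-}$convergence in the product to a coordinatewise statement, using in one direction the continuity of the projections $\pi_i$, and in the other direction the standard description of a base for the product topology together with the fact that $\fc(\ic)$ is a filter.

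First, for the ``only if'' part, I would fix $i\in \Lambda$ and an open set $U_i$ in $X_i$ with $x_i\in U_i.$ Then $\pi_i^{-1}(U_i)$ is an open neighbourhood of $x=(x_i)$ in $X,$ so by hypothesis $f^{-1}(\pi_i^{-1}(U_i))\in \fc(\ic).$ Since $f^{-1}(\pi_i^{-1}(U_i))=(\pi_i\circ f)^{-1}(U_i),$ this says precisely that $\pi_i\circ f$ is $\ic\mbox{-}$convergent to $x_i;$ as $i$ was arbitrary, this direction is finished.

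For the ``if'' part, let $U$ be an arbitrary open neighbourhood of $x$ in $X.$ By definition of the product topology there is a basic open set $V=\bigcap_{k=1}^{n}\pi_{i_k}^{-1}(U_{i_k}),$ with each $U_{i_k}$ open in $X_{i_k}$ and $x_{i_k}\in U_{i_k},$ such that $x\in V\subset U.$ For each $k,$ the hypothesis that $\pi_{i_k}\circ f$ is $\ic\mbox{-}$convergent to $x_{i_k}$ yields $f^{-1}(\pi_{i_k}^{-1}(U_{i_k}))=(\pi_{i_k}\circ f)^{-1}(U_{i_k})\in \fc(\ic).$ Since $\fc(\ic)$ is a filter, it is closed under finite intersections, so $f^{-1}(V)=\bigcap_{k=1}^{n}f^{-1}(\pi_{i_k}^{-1}(U_{i_k}))\in \fc(\ic);$ and since $\fc(\ic)$ is upward closed (equivalently, $\ic$ is closed under subsets), $f^{-1}(U)\supset f^{-1}(V)$ also lies in $\fc(\ic).$ As $U$ was arbitrary, $f$ is $\ic\mbox{-}$convergent to $x.$

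There is essentially no hard part here; the only point beyond routine set-theoretic bookkeeping is in the ``if'' direction, where the finite-intersection property of the dual filter $\fc(\ic)$ is exactly what lets one pass from the finitely many subbasic neighbourhoods to a single basic neighbourhood contained in $U.$ One should simply keep in mind that $\Lambda$ may be infinite, so the argument must route through finite sub-products — which is precisely why only finite intersections arise and why the filter axiom suffices.
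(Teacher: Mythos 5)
Your proposal is correct and follows essentially the same route as the paper: the ``if'' direction is handled by writing a basic neighbourhood of $x$ as a finite intersection of subbasic sets $\pi_{i_k}^{-1}(U_{i_k})$ and invoking closure of $\fc(\ic)$ under finite intersections, exactly as in the paper's proof, while the ``only if'' direction (which the paper dismisses as clear) is the same routine observation that $\pi_i^{-1}(U_i)$ is a neighbourhood of $x$. No discrepancies to report.
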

\begin{proof}
Let $\pi_{i}\circ f$ be $\ic\mbox{-}$convergent to $x_i$ for each $i\in \Lambda.$ Let $O=\displaystyle{\prod_{i\in \Lambda}O_{i}}$ be a basic open set in $X$ containing $x.$ Let $O_i=U_{i}$ for $i=m_{1},m_{2},\cdots,m_{k}$ and $O_i=X_i$ otherwise. Then $\{s\in S:  (\pi_{i}\circ f)(s)\in U_{i}\}\in \fc(\ic) $ for each $i=m_{1},m_{2},\cdots,m_{k}.$ Now $\displaystyle{\bigcap_{i\in\{m_{1},m_{2},\cdots,m_{k}\}} \{s\in S:  (\pi_{i}\circ f)(s)\in U_{i}\}}\in \fc(\ic).$ Consequently the result follows. Clearly the converse holds.
\end{proof}

\begin{Prop}
\label{Pr5}
Let $X=\displaystyle{\prod_{i\in \Lambda}X_i}$ have the product topology and let $O$ be $\ic\mbox{-}$functional open in $X.$ Then $\pi_{i}(O)$ is $\ic\mbox{-}$functional open in $X_i$ for each $i\in \Lambda.$
\end{Prop}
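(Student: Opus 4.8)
The plan is to argue by contradiction using the reformulation of $\ic\mbox{-}$functional openness provided by Lemma \ref{Lm5} together with the coordinatewise criterion for $\ic\mbox{-}$convergence in Lemma \ref{Lm6}. Fix $i\in \Lambda$ and suppose, towards a contradiction, that $\pi_{i}(O)$ is not $\ic\mbox{-}$functional open in $X_i$. By Lemma \ref{Lm5} there is then a function $h:S\longrightarrow X_i\setminus \pi_{i}(O)$ which is $\ic\mbox{-}$convergent to some point $x_i\in \pi_{i}(O)$. Since $x_i\in \pi_{i}(O)$, choose $x=(x_j)_{j\in \Lambda}\in O$ with $\pi_{i}(x)=x_i$.

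Next I would build a function $g:S\longrightarrow X$ witnessing a contradiction with the $\ic\mbox{-}$functional openness of $O$. Define $g(s)$ to be the point of $X=\prod_{j\in \Lambda}X_j$ whose $i$-th coordinate is $h(s)$ and whose $j$-th coordinate is $x_j$ for every $j\neq i$. Then $\pi_{i}\circ g=h$ is $\ic\mbox{-}$convergent to $x_i$, while for each $j\neq i$ the composition $\pi_{j}\circ g$ is the constant function with value $x_j$, which is convergent (hence $\ic\mbox{-}$convergent, as $\ic$ is admissible) to $x_j$. By Lemma \ref{Lm6}, $g$ is $\ic\mbox{-}$convergent to $x=(x_j)_{j\in \Lambda}\in O$.

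The one point that needs a short verification is that $g$ actually maps into $X\setminus O$: if $g(s)\in O$ for some $s\in S$, then $\pi_{i}(g(s))=h(s)\in \pi_{i}(O)$, contradicting $h(s)\in X_i\setminus \pi_{i}(O)$. Hence $g:S\longrightarrow X\setminus O$ is $\ic\mbox{-}$convergent to the point $x\in O$, which by Lemma \ref{Lm5} contradicts the hypothesis that $O$ is $\ic\mbox{-}$functional open in $X$. Therefore no such $h$ exists, and $\pi_{i}(O)$ is $\ic\mbox{-}$functional open in $X_i$ for each $i\in \Lambda$.

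I do not anticipate a genuine obstacle here; the argument is essentially a bookkeeping exercise once Lemmas \ref{Lm5} and \ref{Lm6} are in hand. The only place requiring care — and the step I would single out as the crux — is the verification that the lifted function $g$ takes values outside $O$, which is exactly the elementary observation that a point cannot lie in $O$ if its $i$-th coordinate avoids $\pi_{i}(O)$.
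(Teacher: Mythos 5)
Your proof is correct, and it follows the same basic strategy as the paper's: lift a badly behaved function on $X_i$ to a function on the product by freezing the other coordinates, then invoke Lemma \ref{Lm6}. There are two small but genuine differences worth noting. First, the paper starts from the Proposition \ref{Pr4} characterization (a function $\ic$\mbox{-}convergent to a point of $\pi_i(O)$ with $\{s:g(s)\in\pi_i(O)\}\in\ic$) and only gets $\{s:h(s)\in O\}\in\ic$ for the lift, whereas you start from Lemma \ref{Lm5} and obtain a lift landing entirely in $X\setminus O$; both suffice, and your route is slightly more economical. Second, and more substantively, the paper freezes the $j$-th coordinates at arbitrary points $a_j\in\pi_j(O)$, so the resulting limit $y$ is only guaranteed to lie in $\prod_j\pi_j(O)$, which may be strictly larger than $O$; the contradiction with the $\ic$\mbox{-}functional openness of $O$ requires the limit to lie in $O$ itself. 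Your choice of the frozen coordinates as those of an actual point $x\in O$ lying over $x_i$ guarantees $x\in O$ and closes this gap. So your argument is not merely equivalent: it is a corrected version of the paper's proof, and the ``crux'' you single out (that the lift avoids $O$ because its $i$-th coordinate avoids $\pi_i(O)$) is exactly the right point to check.
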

\begin{proof}
If possible let $\pi_{i}(O)$ be not $\ic\mbox{-}$functional open in $X_i.$ Then there exists a function $g:S\longrightarrow X_i$ which is $\ic\mbox{-}$convergent to $x\in \pi_{i}(O)$ and $\{s\in S:g(s)\in \pi_{i}(O)\}\in \ic.$ Now fix some $a_{j}\in \pi_{j}(O)$ for $j\not=i.$ Define a function $h:S\longrightarrow X$ by
\begin{equation*}
(\pi_{j}\circ h)(s) =
\begin{cases}
a_j \, &\text{if} \quad j\not= i\\
g(s) \, &\text{if} \quad j=i. \\
\end{cases}
\end{equation*}
Let $y=(y_i)$ be defined as follows.
\begin{equation*}
y_{j}=
\begin{cases}
a_{j} \, &\text{if} \quad j\not= i,\\
x \, & \text{if} \quad j=i. \\
\end{cases}
\end{equation*}
Then $h: S\longrightarrow X$ is $\ic\mbox{-}$convergent to $y$ (by Lemma \ref{Lm6}). Also $\{s\in S: g(s)\in \pi_{i}(O)\}=\{s\in S:h(s)\in O\}\in \ic,$ contradicts that $O$ is $\ic\mbox{-}$functional open.\\
\end{proof}
We now state certain basic results regarding $\ic\mbox{-}$functional spaces without proofs. \\

(i) Let $\ic\subset \jc$ be two ideals of $S$ and let $X$ be a space. If $U\subset X$ is $\jc\mbox{-}$functional open then it is $\ic\mbox{-}$functional open.

(ii) Let $\ic\subset \jc$ be two ideals of $S.$ If $X$ is $\ic\mbox{-}$functional then it is $\jc\mbox{-}$functional.

(iii) Suppose that $\{\ic_{\alpha}:\alpha\in A\}$ is a collection of ideals of $S.$ If $X$ is a space and $U\subset X$ is $\ic_{\alpha}\mbox{-}$functional open for some $\alpha\in A,$ then $U$ is $\displaystyle{\bigcap_{\alpha \in A}} \ic_{\alpha}\mbox{-}$functional open.\\

\begin{Lemma}
\label{Lm8}
Let $\ic$ be a maximal ideal of $S.$ If $U,~V$ are two $\ic\mbox{-}$functional open subsets of $X$ then $U\cap V$ is also $\ic\mbox{-}$functional open.
\end{Lemma}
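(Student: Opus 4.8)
The plan is to reduce everything to the characterization of $\ic\mbox{-}$functional open sets given in Proposition \ref{Pr4}. The first thing I would record is the special feature of maximal ideals that drives the whole argument: when $\ic$ is maximal, the coideal $\ic^{+}$ coincides with the dual filter $\mathcal{F}(\ic)$. Indeed, $A\in\ic^{+}$ means $A\notin\ic$, which by maximality forces $S\setminus A\in\ic$, i.e. $A\in\mathcal{F}(\ic)$; conversely, if $A\in\mathcal{F}(\ic)$ then $S\setminus A\in\ic$, and since $\ic$ is proper we cannot also have $A\in\ic$, so $A\in\ic^{+}$. Thus $\ic^{+}=\mathcal{F}(\ic)$ is in particular a filter, hence closed under finite intersections.

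Next I would take an arbitrary function $g:S\longrightarrow X$ that is $\ic\mbox{-}$convergent to some point $x\in U\cap V$, and aim to verify condition $(ii)$ of Proposition \ref{Pr4} for the set $U\cap V$. Since $x\in U$ and $U$ is $\ic\mbox{-}$functional open, Proposition \ref{Pr4}$(ii)$ gives $\{s\in S:g(s)\in U\}\in\ic^{+}=\mathcal{F}(\ic)$, and likewise $\{s\in S:g(s)\in V\}\in\mathcal{F}(\ic)$. Using that $\mathcal{F}(\ic)$ is a filter, the intersection
\[
\{s\in S:g(s)\in U\cap V\}=\{s\in S:g(s)\in U\}\cap\{s\in S:g(s)\in V\}
\]
again belongs to $\mathcal{F}(\ic)\subset\ic^{+}$. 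As $g$ was arbitrary, the implication $(ii)\Longrightarrow(i)$ of Proposition \ref{Pr4} shows that $U\cap V$ is $\ic\mbox{-}$functional open, which is what we want.

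The argument is short, and the only step that genuinely needs care is the identification $\ic^{+}=\mathcal{F}(\ic)$ in the first paragraph; this is precisely where maximality of $\ic$ is used. Without it one only knows $\{s:g(s)\in U\},\{s:g(s)\in V\}\in\ic^{+}$, and the coideal of a general ideal is not closed under finite intersections, so the conclusion can fail. So the "hard part", such as it is, is simply to notice that maximality upgrades the coideal membership supplied by Proposition \ref{Pr4} to filter membership, after which closure under intersection does the rest.
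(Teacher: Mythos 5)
Your proof is correct and follows essentially the same route as the paper: both arguments apply Proposition \ref{Pr4} to $U$ and $V$, use maximality of $\ic$ to place the two preimage sets in $\fc(\ic)$, and then intersect using the filter property. Your version merely makes explicit the identification $\ic^{+}=\fc(\ic)$ that the paper's proof uses silently.
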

\begin{proof}
Let $g:S\longrightarrow X$ be $\ic\mbox{-}$convergent to $x\in U\cap V.$ So, $\{s\in S:g(s)\in U\}\in \fc(\ic)$  and $\{s\in S:g(s)\in V\}\in \fc(\ic)$ (by Proposition \ref{Pr4}). Now $\{s\in S:g(s)\in U\}\cap \{s\in S:g(s)\in V\}=\{s\in S:g(s)\in U\cap V\}\in \fc(\ic)$ and therefore $U\cap V$ is $\ic\mbox{-}$functional open.
\end{proof}

The \textit{$\ic\mbox{-}$functional coreflection} of a space $X$ is the set $X$ endowed with the topology
 generated by $\ic\mbox{-}$functional open subsets of $X$ as a subbase and the topology is denoted by $\ic\mbox{-}fX.$ Clearly for a space $X,~\ic\mbox{-}fX$ is finer than the topology of $X.$ Further If $\ic$ is a maximal ideal of $S,$ then the collection of all $\ic\mbox{-}$functional open sets itself forms a topology on $X.$\\

\begin{Def}
Let $\ic$ be an ideal of $S$ and $A\subset X.$ A function $f:S\longrightarrow X$ is said to be $\ic\mbox{-}$eventually in $A$ if there is a $E\in \ic$ such that $f(s)\in A$ for all $s\in S\setminus E.$
\end{Def}
\begin{Prop}
\label{Pr6}
Let $\ic$ be a maximal ideal of $S$. Then $A\subset X$ is $\ic\mbox{-}$functional open if and only if for each function which is  $\ic\mbox{-}$convergent to a point of $A$, it is $\ic\mbox{-}$eventually in $A.$
\end{Prop}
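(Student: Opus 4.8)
The plan is to deduce this directly from Proposition \ref{Pr4}(ii) by exploiting the one special feature of a maximal ideal, namely that its coideal coincides with its dual filter. Concretely, I would first record the elementary observation that if $\ic$ is a maximal (hence proper) ideal of $S$, then for every $B\subset S$ we have $B\in\ic^{+}$ if and only if $B\in\fc(\ic)$: indeed, $B\in\ic^{+}$ means $B\notin\ic$, and by maximality this forces $S\setminus B\in\ic$, i.e. $B\in\fc(\ic)$; conversely $B\in\fc(\ic)$ gives $S\setminus B\in\ic$, and since $S\notin\ic$ we cannot also have $B\in\ic$, so $B\in\ic^{+}$. (For an arbitrary proper ideal only the inclusion $\fc(\ic)\subset\ic^{+}$ holds, and that inclusion is all that is needed for the converse direction anyway.)

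Next I would translate the notion ``$\ic$-eventually in $A$'' into a statement about the trace set: a function $g\colon S\to X$ is $\ic$-eventually in $A$ precisely when there is $E\in\ic$ with $g(s)\in A$ for all $s\in S\setminus E$, which is equivalent to $\{s\in S:g(s)\notin A\}\in\ic$, i.e. to $\{s\in S:g(s)\in A\}\in\fc(\ic)$.

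With these two reformulations in hand the argument is short. For the ``only if'' direction, assume $A$ is $\ic$-functional open and let $g\colon S\to X$ be $\ic$-convergent to some $x\in A$. By Proposition \ref{Pr4}(ii) we have $\{s\in S:g(s)\in A\}\in\ic^{+}$, and by the coideal-equals-filter observation this set lies in $\fc(\ic)$; hence $g$ is $\ic$-eventually in $A$. For the ``if'' direction, suppose every function that is $\ic$-convergent to a point of $A$ is $\ic$-eventually in $A$, and let $g\colon S\to X$ be $\ic$-convergent to $x\in A$. Then $\{s\in S:g(s)\in A\}\in\fc(\ic)\subset\ic^{+}$, so the characterization in Proposition \ref{Pr4}(ii) is satisfied and $A$ is $\ic$-functional open.

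I do not anticipate any genuine obstacle here; the only point requiring care is the identity $\ic^{+}=\fc(\ic)$ for maximal ideals, which is exactly the place where the hypothesis of maximality is used, and everything else is unwinding definitions together with Proposition \ref{Pr4}.
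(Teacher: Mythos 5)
Your proposal is correct and follows exactly the route the paper intends: the paper's own proof is the one-line remark that the result follows from Proposition \ref{Pr4}, and your argument simply fills in the details, namely the identification $\ic^{+}=\fc(\ic)$ for a maximal ideal and the translation of ``$\ic$-eventually in $A$'' into $\{s\in S:g(s)\in A\}\in\fc(\ic)$.
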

\begin{proof}
The result follows from Proposition \ref{Pr4}.
\end{proof}
\begin{Th}
\label{TA2}
Every $\ic\mbox{-}$functional space is hereditary with respect to $\ic\mbox{-}$functional open ($\ic\mbox{-}$functional closed) subspaces.
\end{Th}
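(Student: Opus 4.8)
The plan is to reduce the statement to two familiar-looking facts and then prove each by a single uniform device. First I would observe that, since $X$ is an $\ic$-functional space, every $\ic$-functional closed subset of $X$ is closed and, by taking complements in Definition \ref{D1}, every $\ic$-functional open subset of $X$ is open. Hence the theorem amounts to: an open subspace of an $\ic$-functional space is $\ic$-functional, and a closed subspace of an $\ic$-functional space is $\ic$-functional. In each case the mechanism is the same: given a witnessing subset of the subspace $Y$, I would \emph{promote} it to a subset of $X$ of the same type, invoke the hypothesis on $X$ to conclude it is open (resp. closed) in $X$, and then intersect with $Y$. A small fact used repeatedly is that if a function takes values in $Y$ and has its limit in $Y$, then its $\ic$-convergence computed in $X$ and in the subspace $Y$ coincide, because the open sets of $Y$ are the traces of those of $X$.

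For a closed subspace $Y$, I would take $F\subseteq Y$ that is $\ic$-functional closed in the subspace $Y$ and show it is $\ic$-functional closed in $X$. If $g\colon S\to F$ is $\ic$-convergent to $z\in X$, then for each neighbourhood $V$ of $z$ the set $g^{-1}(V)$ lies in $\fc(\ic)$, hence is nonempty, so $V$ meets the image of $g$, which is contained in $Y$; as $Y$ is closed, $z\in Y$. Since $g$ has values in $Y$ and $z\in Y$, $g$ is also $\ic$-convergent to $z$ in the subspace $Y$, and $\ic$-functional closedness of $F$ there forces $z\in F$. Thus $F$ is $\ic$-functional closed in $X$, hence closed in $X$, hence $F=F\cap Y$ is closed in $Y$; as $F$ was arbitrary, $Y$ is $\ic$-functional.

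For an open subspace $Y$ I would dualize and work with $\ic$-functional open subsets of $Y$. Given $U\subseteq Y$ that is $\ic$-functional open in the subspace $Y$, I claim $U$ is $\ic$-functional open in $X$; then $U$ is open in $X$, and so $U=U\cap Y$ is open in $Y$, which makes $Y$ $\ic$-functional. To prove the claim I would argue by contradiction using Lemma \ref{Lm5}: suppose some $h\colon S\to X\setminus U$ is $\ic$-convergent to $x\in U$ (the case $U=Y$ being trivial, fix $p\in Y\setminus U$). Because $Y$ is open and $x\in U\subseteq Y$, the set $Y$ is a neighbourhood of $x$, so $h^{-1}(Y)\in\fc(\ic)$. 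I would then define $h'\colon S\to Y\setminus U$ agreeing with $h$ on $h^{-1}(Y)$ — where the values automatically lie in $(X\setminus U)\cap Y=Y\setminus U$ — and equal to $p$ elsewhere; for each open $V\ni x$ the set $\{s:h'(s)\in V\}$ contains $h^{-1}(V)\cap h^{-1}(Y)\in\fc(\ic)$, so $h'$ is $\ic$-convergent to $x$, hence $\ic$-convergent to $x$ in the subspace $Y$, contradicting that $U$ is $\ic$-functional open in $Y$.

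The main obstacle, and the step I would spell out most carefully, is precisely the inference $h^{-1}(Y)\in\fc(\ic)$ in the open case. For a merely $\ic$-functional open $Y$, Proposition \ref{Pr4} yields only $h^{-1}(Y)\in\ic^{+}$, and an $\ic^{+}$ set need not be co-$\ic$, so the patching of $h$ into $Y\setminus U$ would break down. It is the preliminary reduction — upgrading ``$\ic$-functional open'' to genuinely ``open'' via the hypothesis that $X$ is an $\ic$-functional space — that makes $h^{-1}(Y)$ co-$\ic$ and allows the argument to close; everything else is routine bookkeeping with the dual filter $\fc(\ic)$.
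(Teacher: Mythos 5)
Your proposal is correct and follows essentially the same route as the paper: first upgrade ``$\ic$-functional open/closed'' subspaces to genuinely open/closed ones using the hypothesis on $X$, then in the open case patch a function $h\colon S\to X\setminus U$ into one with values in $Y\setminus U$ using $h^{-1}(Y)\in\fc(\ic)$, and in the closed case observe the limit already lies in $Y$. The only cosmetic difference is that you phrase the open case as a contradiction via Lemma \ref{Lm5} where the paper invokes Proposition \ref{Pr4} directly; the construction and the key step you single out (openness of $Y$ forcing $h^{-1}(Y)\in\fc(\ic)$) are exactly the paper's.
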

\begin{proof}
Let $X$ be an $\ic\mbox{-}$functional space. Suppose that $Y$ is an $\ic\mbox{-}$functional open set in $X$. Then $Y$ is open in $X.$ Let $U (\subsetneqq Y)$ be $\ic\mbox{-}$functional open in $Y.$ We have to show that $U$ is $\ic\mbox{-}$functional open in $X.$ Suppose that $g:S\longrightarrow X$ is $\ic\mbox{-}$convergent to $x\in U\subset Y.$ Since $Y$ is open, $\{s\in S:g(s)\in Y\}\in \fc(\ic).$ Let $y\in Y\setminus U.$ Define a function $h:S\longrightarrow Y$ by $h(s)=g(s)$ if $g(s)\in Y$ and $h(s)=y$ if $g(s)\not\in Y.$ Therefore, $h:S\longrightarrow X$ is $\ic\mbox{-}$convergent to $x.$ Since $|\{s\in S:g(s)\not\in U\}|=|\{s\in S:h(s)\not\in U\}|,$  by Proposition \ref{Pr4}, it follows that $U$ is $\ic\mbox{-}$functional open in $X.$ As $X$ is $\ic\mbox{-}$functional space, so $U$ is open in $X$ and so open in  $Y.$

Let $Y$ be an $\ic\mbox{-}$functional closed subset of $X.$ Then $Y$ is closed in $X.$ Let $F (\subsetneqq Y)$ be an $\ic\mbox{-}$functional closed subset of $Y.$ We have to show that $F$ is $\ic\mbox{-}$functional closed subset of $X.$ Suppose that $g:S\longrightarrow F$ is $\ic\mbox{-}$convergent to $x.$ So $x\in Y$ as $Y$ is closed. Therefore $x\in F$ since $F$ is an $\ic\mbox{-}$functional closed subset of $Y.$ Thus $F$ is an $\ic\mbox{-}$functional closed subset of $X,$ so $F$ is a closed subset of $X$ and hence a closed subset of $Y.$
\end{proof}

\begin{Th}
\label{TA3}
$\ic\mbox{-}$functional spaces are preserved by topological sums.
\end{Th}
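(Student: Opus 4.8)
The plan is to use the equivalent formulation that a space is $\ic\mbox{-}$functional precisely when every $\ic\mbox{-}$functional closed subset is closed (Definition \ref{D1}), and to reduce the question to the individual summands. Write $X=\bigoplus_{\alpha\in\Lambda}X_{\alpha}$ for the topological sum of $\ic\mbox{-}$functional spaces $X_{\alpha}$, and recall two standard facts about topological sums: a subset of $X$ is closed if and only if its trace on each summand is closed, and each $X_{\alpha}$ sits inside $X$ as a clopen subspace carrying its original topology. Given an $\ic\mbox{-}$functional closed set $F\subset X$, it therefore suffices to show that $F\cap X_{\alpha}$ is closed in $X_{\alpha}$ for every $\alpha$.

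First I would record the transfer principle for convergence: a function $g:S\longrightarrow X_{\alpha}$ is $\ic\mbox{-}$convergent to a point $x$ with respect to the topology of $X_{\alpha}$ if and only if, regarded as a function $g:S\longrightarrow X$, it is $\ic\mbox{-}$convergent to $x$ in $X$. Indeed, since $X_{\alpha}$ is open in $X$, the open neighbourhoods of $x$ contained in $X_{\alpha}$ form a neighbourhood base of $x$ in $X$; for such a neighbourhood $W$ one has $g^{-1}(W)=g^{-1}(W\cap X_{\alpha})$ because $g(S)\subset X_{\alpha}$, so the requirement that these preimages lie in $\mathcal{F}(\ic)$ is literally the same condition in both spaces.

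Now fix $\alpha$ and show that $F\cap X_{\alpha}$ is $\ic\mbox{-}$functional closed in $X_{\alpha}$ (this is vacuous if $F\cap X_{\alpha}=\emptyset$). Let $g:S\longrightarrow F\cap X_{\alpha}$ be $\ic\mbox{-}$convergent in $X_{\alpha}$ to some $x\in X_{\alpha}$. By the transfer principle $g$ is $\ic\mbox{-}$convergent to $x$ in $X$, and since $g(S)\subset F$ and $F$ is $\ic\mbox{-}$functional closed in $X$, we obtain $x\in F$, hence $x\in F\cap X_{\alpha}$. Thus $F\cap X_{\alpha}$ is $\ic\mbox{-}$functional closed in the $\ic\mbox{-}$functional space $X_{\alpha}$, so it is closed in $X_{\alpha}$. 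As $\alpha$ was arbitrary, $F$ is closed in $X$, and therefore $X$ is $\ic\mbox{-}$functional.

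I do not expect a genuine obstacle here; the one place that needs a moment of care is the transfer principle, which rests on each summand being open in the sum and on the index set $S$ being kept fixed throughout, so that the same function witnessing failure of closedness in a summand also witnesses it in $X$. One could equivalently run the argument with $\ic\mbox{-}$functional open sets via Proposition \ref{Pr4}, but the closed-set formulation keeps the bookkeeping minimal.
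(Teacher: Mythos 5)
Your argument is correct. Note that the paper states Theorem \ref{TA3} without any proof, so there is nothing to compare against; your reduction to the summands via the two standard facts about sums (closedness is detected on traces, and each $X_{\alpha}$ is open in $X$ with its original topology) together with the transfer principle for $\ic\mbox{-}$convergence of a function whose image lies in a single open summand is exactly the routine verification the authors evidently intended to leave to the reader, and every step checks out against Definition \ref{D1}.
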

\begin{Th}
\label{TA4}
Any quotient space of an $\ic\mbox{-}$functional space is an $\ic\mbox{-}$functional space.
\end{Th}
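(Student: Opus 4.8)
The plan is to show that if $q : X \to Z$ is a quotient map and $X$ is an $\ic$-functional space, then $Z$ is an $\ic$-functional space. First I would take an arbitrary $\ic$-functional closed subset $F \subset Z$ and aim to prove that $F$ is closed in $Z$. By the definition of the quotient topology, it suffices to prove that $q^{-1}(F)$ is closed in $X$. Since $X$ is $\ic$-functional, it is enough to show that $q^{-1}(F)$ is an $\ic$-functional closed subset of $X$.

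So the core step is: suppose $g : S \to q^{-1}(F)$ is $\ic$-convergent to $x \in X$; I must show $x \in q^{-1}(F)$, i.e.\ $q(x) \in F$. The natural move is to push the function forward by $q$. One checks quickly that $q \circ g : S \to F$ is $\ic$-convergent to $q(x)$: for any open neighbourhood $U$ of $q(x)$ in $Z$, continuity of $q$ gives that $q^{-1}(U)$ is an open neighbourhood of $x$, so $\{s \in S : g(s) \in q^{-1}(U)\} \in \fc(\ic)$, and this set is contained in $\{s \in S : (q\circ g)(s) \in U\}$, which is therefore in $\fc(\ic)$ as well. Thus $q \circ g : S \to F$ is $\ic$-convergent to $q(x) \in Z$. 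Since $F$ is $\ic$-functional closed in $Z$, we conclude $q(x) \in F$, hence $x \in q^{-1}(F)$. This establishes that $q^{-1}(F)$ is $\ic$-functional closed in $X$, hence closed, hence $F$ is closed in $Z$, completing the argument.

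I do not anticipate a serious obstacle here; the proof is a routine transport-of-structure argument. The only point requiring a little care is the direction of the implication: one uses that $q$ is continuous to transport $\ic$-convergence forward from $X$ to $Z$ (which is immediate), and one uses that $q$ is a quotient map only at the very end, to deduce closedness of $F$ from closedness of $q^{-1}(F)$. It is worth noting explicitly that no surjectivity or special hypothesis on $\ic$ beyond admissibility is needed, and that the same scheme will clearly apply to the analogous statements for $\ic$-functional open sets (replacing "closed" by "open" throughout, using $q^{-1}(Z \setminus F) = X \setminus q^{-1}(F)$).
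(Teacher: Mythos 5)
Your proof is correct and is essentially the paper's argument: both push a function $g:S\to q^{-1}(F)$ forward along the continuous quotient map to get $q\circ g:S\to F$ $\ic$-convergent to $q(x)$, invoke $\ic$-functional closedness of $F$, then use the $\ic$-functional property of $X$ and the quotient property to conclude. The only difference is presentational — the paper argues by contradiction while you argue directly — so nothing further is needed.
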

\begin{proof}
Let $X$ be an $\ic\mbox{-}$functional space and let $f:X\longrightarrow Y$ be a quotient mapping. Let  $F\subset Y$ be $\ic\mbox{-}$functional closed. If $F$ is not closed, $f^{-1}(F)$ is not closed (as $f$ is a quotient mapping) and so $f^{-1}(F)$ is not $\ic\mbox{-}$functional closed. Then there exists a function $g:S\longrightarrow f^{-1}(F)$ which is $\ic\mbox{-}$convergent to $x\not\in f^{-1}(F).$ Since $F$ is $\ic\mbox{-}$functional closed and $f$ is continuous, we obtain that $f\circ g:S\longrightarrow F$ is $\ic\mbox{-}$convergent to $f(x)\in F.$ This contradicts that $x\not\in f^{-1}(F).$
\end{proof}
\begin{Th}
\label{TA5}
Every $\ic\mbox{-}$functional space is a quotient of some metric space provided $\ic = \ic_0$, the ideal defined in Proposition \ref{Pr3}.
\end{Th}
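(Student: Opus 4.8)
The plan is to exploit the known classical result that every sequential space is a quotient of a metric space, and to upgrade that metric-space representation to one that respects the $\ic_0$-functional structure via Proposition \ref{Pr3}. Recall that $\ic_0 = \{A \subset S : A \cap S_i \neq \emptyset \text{ for finitely many } i\}$, where $S = \bigcup_{i \in \bn} S_i$ is a partition into nonempty pieces. By Proposition \ref{Pr3}, every sequential space is $\ic_0$-functional; the real content here is the converse direction, namely that an $\ic_0$-functional space is in particular sequential, so that the classical theorem applies.

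First I would show that an $\ic_0$-functional space $X$ is sequential. Suppose $F \subset X$ is sequentially closed but not closed. Then there is a sequence $(x_n : n \in \bn)$ in $F$ converging to some $x \notin F$; wait — sequential closedness would force $x \in F$, so actually the right setup is: suppose $F$ is sequentially closed; I must show $F$ is $\ic_0$-functional closed (then $\ic_0$-functionality gives that $F$ is closed, so $X$ is sequential). So let $g : S \to F$ be $\ic_0$-convergent to $x \in X$. I need $x \in F$. The key observation is that $\ic_0$-convergence of $g$ lets one extract an ordinary convergent sequence: for each basic open $U \ni x$, the set $\{s : g(s) \notin U\}$ lies in $\ic_0$, hence meets only finitely many $S_i$. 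Picking one representative $s_i \in S_i$ for each $i$ and setting $x_i = g(s_i)$, one checks that $(x_i)$ is a sequence in $F$ converging to $x$ in the ordinary sense: given $U \ni x$, all but finitely many $S_i$ are disjoint from $\{s : g(s) \notin U\}$, so $g(s_i) \in U$ for all but finitely many $i$. Since $F$ is sequentially closed, $x \in F$. Hence $F$ is $\ic_0$-functional closed, and since $X$ is $\ic_0$-functional, $F$ is closed. Thus $X$ is sequential.

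Next, invoke the classical representation theorem (see \cite{engelking,Lin1}): every sequential space $X$ is the image of a metric space $M$ under a quotient mapping $q : M \to X$. Combined with the previous paragraph, an $\ic_0$-functional space is sequential and hence such a quotient of a metric space, which is exactly the claim.

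The main obstacle is the first step — verifying carefully that $\ic_0$-functionality implies sequentiality — and in particular being precise about the extraction of an honest convergent sequence from an $\ic_0$-convergent function. One must confirm that each $S_i$ is nonempty so that the representatives $s_i$ exist, and that the "finitely many $i$" condition in the definition of $\ic_0$ is exactly what makes the extracted sequence converge in the ordinary sense. Once sequentiality is established, the rest is a direct appeal to a standard theorem, so no further difficulty arises.
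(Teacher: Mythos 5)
Your proof is correct, but it takes a genuinely different route from the paper's. You first establish the converse of Proposition \ref{Pr3} --- that an $\ic_0$-functional space is sequential, by extracting from any $\ic_0$-convergent $g:S\to F$ an ordinary convergent sequence $(g(s_i))_{i\in\bn}$ with $s_i\in S_i$ (the extraction is sound: for a fixed neighbourhood $U$ of the limit, $g^{-1}(X\setminus U)\in\ic_0$ meets only finitely many $S_i$, so $g(s_i)\in U$ for all but finitely many $i$, and the representatives $s_i$ need not depend on $U$) --- and then you invoke Franklin's classical theorem that every sequential space is a quotient of a metric space. The paper instead gives a direct, self-contained construction: it forms the metric space $L=\bigoplus_{Z\in\cs}Z\times Y$, where $Y=\{\tfrac{1}{n+1}:n\in\bn\}\cup\{0\}$ carries the usual metric and $\cs$ indexes the $\ic_0$-convergent functions into $X$, and verifies explicitly that the evaluation map $\Phi:L\to X$ is a quotient mapping --- essentially re-proving Franklin's construction in the functional setting rather than citing it. Your reduction is shorter and, as a by-product, shows that the $\ic_0$-functional spaces are exactly the sequential spaces (the other inclusion being Proposition \ref{Pr3}), which is a sharper structural statement; what it costs is the appeal to an external theorem, whereas the paper's argument exhibits the metric preimage and the quotient map concretely.
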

\begin{proof}
Let $X$ be an $\ic\mbox{-}$functional space and let $t_n=\dfrac{1}{n+1},~n\in \bn.$ Define a function $f:S\longrightarrow \br$ by $f(s)=t_n$ if $s\in S_n.$ Then $f$ is $\ic\mbox{-}$convergent to $0.$ Take $Y=\{\dfrac{1}{n+1}:n\in \bn\}\cup \{0\}.$ The topology of $Y$ is induced from the usual metric topology of $\br.$ Clearly $O\subset Y$ is open if and only if either $0\not\in O$ or if $0\in O$ then $f(s)\in O$ if $s\in A$ for some $A\in \fc(\ic).$ Let $\cs=\{g:S\longrightarrow X:~g$ is $\ic\mbox{-}$convergent to some $g_{0}\in g(S)\}.$
Writing $\{(g(s):s\in S)\} = Z$, let $d$ be a metric on $Z\times Y = \{(Z, y): y \in Y\}$ defined by $d(Z,a),(Z,b))=|a-b|.$

Now consider the topological sum $$L=\displaystyle{\bigoplus_{Z\in \cs}Z\times Y}.$$ We observe that $A\subset L$ is open if and only if $\{y\in Y:(Z,y)\in A\}$ is open in $Y.$ Consider the mapping $\Phi:L\longrightarrow X$ defined by $\Phi(Z,0)=g_{0}$ and $\Phi(Z,f(s))=g(s).$ Clearly $\Phi$ is onto. Now we show that $\Phi$ is a quotient mapping.

Let $U\subset X$ be open. Then for every $g:S\longrightarrow X,~\ic\mbox{-}$convergent to $a\in U,~\{s\in S:g(s)\in U\}\in \fc(\ic).$ If $(Z,0)\in \Phi^{-1}(U)$ then $g_{0}\in U,$ also $\{s\in S:g(s)\in U\}\in \fc(\ic).$ Write $E=\{s\in S:g(s)\in U\}.$ By the definition of $\Phi,~\Phi(Z,f(s))=g(s)\in U$ for each $s\in E.$ Therefore, $\Phi^{-1}(U)$ is open in $L.$

Again if $U$ is not open in $X$, then  there exists a function $g:S\longrightarrow (X\setminus U)$ which is $\ic\mbox{-}$convergent to $g_{0}\in U.$ Consequently $\{y\in Y:(Z,y)\in \Phi^{-1}(U)\}=\{0\},$ which is not open in $Y.$ Hence $\Phi^{-1}(U)$ is not open in $L.$
\end{proof}

\section{\textbf{$\ic$\mbox{-}functional continuity}}
\vskip .3cm

In this section our main object of investigation is the notion of  $\ic\mbox{-}$functional continuity. Recall that a mapping $f$ from a space $X$ to another space $Y$ is called \textit{sequentially continuous} \cite{boone} provided for any sequentially open set $U$ in $Y,$ $f^{-1}(U)$ is sequentially open in $X.$ It is proved in \cite{boone} that a mapping $f:X\longrightarrow Y$ is sequentially continuous if and only if $f$ \textit{preserves the convergence} of sequences, i.e., for each sequence $(x_n:n\in \bn)$ in $X$ converging to $x,$ the sequence $(f(x_{n}):n\in \bn)$ converges to $f(x).$ In \cite{Zhou2}, authors introduced the notion of $\ic\mbox{-}$continuity in terms of $\ic\mbox{-}$open sets. Extending this notion in the language of functions, we introduce following definitions.
\begin{Def}
\label{D100}
Let $\ic$  be an ideal of $S$ and $f:X\longrightarrow Y$ be a mapping. Then\\
\noindent $(i)$ $f$ is called an \textit{$\ic\mbox{-}$functional convergence preserving mapping} provided for a function $g:S\longrightarrow X,~\ic\mbox{-}$convergent to $x,~f\circ g$ is $\ic\mbox{-}$convergent to $f(x).$\\
\noindent $(ii)$ $f$ is called \textit{$\ic\mbox{-}$functional continuous} provided for any $\ic\mbox{-}$functional open set $U$ in $Y$, $f^{-1}(U)$ is $\ic\mbox{-}$functional open in $X.$
\end{Def}
We call $f$ simply \textit{functional continuous} if we take functional open set instead of $\ic\mbox{-}$functional open set in Definition \ref{D100}.\\
\begin{Lemma}
\label{Lm9}
Let $Y\subset X$ and let $U$ be $\ic\mbox{-}$functional open in $X.$ Then $U\cap Y$ is $\ic\mbox{-}$functional open in $Y.$
\end{Lemma}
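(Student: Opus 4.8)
The plan is to use the reformulation of $\ic\mbox{-}$functional openness given in Lemma \ref{Lm5}: a set $O$ in a space $Z$ is $\ic\mbox{-}$functional open exactly when no function $h:S\longrightarrow Z\setminus O$ is $\ic\mbox{-}$convergent to a point of $O$. So, working inside the subspace $Y$, I would suppose toward a contradiction that there is a function $h:S\longrightarrow Y\setminus (U\cap Y)$ that is $\ic\mbox{-}$convergent (in the subspace topology of $Y$) to some point $x\in U\cap Y$, and then manufacture from it a function into $X\setminus U$ that is $\ic\mbox{-}$convergent in $X$ to $x$, contradicting that $U$ is $\ic\mbox{-}$functional open in $X$.

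First I would record the trivial set identity $Y\setminus (U\cap Y)=Y\setminus U$, so that $h$ actually maps $S$ into $Y\setminus U$, hence into $X\setminus U$. The one substantive observation is that $\ic\mbox{-}$convergence passes from a subspace to the ambient space: if $h:S\longrightarrow Y$ is $\ic\mbox{-}$convergent to $x$ with respect to the subspace topology of $Y$, then viewing $h$ as a function $S\longrightarrow X$ it is $\ic\mbox{-}$convergent to $x$ in $X$. Indeed, for any open $V\subseteq X$ with $x\in V$, the set $V\cap Y$ is a $Y$-open neighbourhood of $x$, so $h^{-1}(V\cap Y)\in\fc(\ic)$; but since $h(S)\subseteq Y$ we have $h^{-1}(V\cap Y)=h^{-1}(V)$, giving $h^{-1}(V)\in\fc(\ic)$ as required.

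Combining these two points: the function $h:S\longrightarrow X\setminus U$ is $\ic\mbox{-}$convergent in $X$ to $x\in U$, which is impossible by Lemma \ref{Lm5} applied to the $\ic\mbox{-}$functional open set $U$ in $X$. Hence no such $h$ exists, and Lemma \ref{Lm5} (applied now in $Y$) yields that $U\cap Y$ is $\ic\mbox{-}$functional open in $Y$.

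I do not expect any genuine obstacle here; the only place requiring a moment of care is the direction of implication for $\ic\mbox{-}$convergence between a subspace and its ambient space, and the harmless bookkeeping that $Y\setminus(U\cap Y)=Y\setminus U$ so that the constructed function really lands in $X\setminus U$ without needing to alter any values.
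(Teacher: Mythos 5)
Your proof is correct and is essentially the paper's argument: both rest on the observation that a function $\ic\mbox{-}$convergent in the subspace $Y$ is $\ic\mbox{-}$convergent in $X$, together with the identity $Y\setminus(U\cap Y)=Y\setminus U$. The only cosmetic difference is that the paper phrases it positively via Proposition \ref{Pr4} (showing $\{s\in S:g(s)\in U\cap Y\}\in\ic^{+}$) whereas you argue by contradiction through Lemma \ref{Lm5}; if anything, you make explicit the subspace-to-ambient convergence step that the paper leaves implicit.
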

\begin{proof}
Let $g:S\longrightarrow Y$ be $\ic\mbox{-}$convergent to $y\in U\cap Y.$ Then $\{s\in S:g(s)\in U\}\in \ic^{+}$ (by Proposition \ref{Pr4}) and therefore $\{s\in S:g(s)\in U\cap Y\}\in \ic^{+}.$
\end{proof}

\begin{Lemma}
\label{LM100}
Let $\ic$ be a maximal ideal of $S$ and let $U\subset Y\subset X.$ Suppose that $U$ is $\ic\mbox{-}$functional open in $Y$ and $Y$ is $\ic\mbox{-}$functional open in $X.$ Then $U$ is $\ic\mbox{-}$functional open in $X.$
\end{Lemma}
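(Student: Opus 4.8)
The plan is to reduce everything to the coideal characterization of $\ic\mbox{-}$functional openness provided by Proposition \ref{Pr4}, exploiting the maximality of $\ic$ to pass from ``$\in \ic^{+}$'' to ``$\in \fc(\ic)$'' whenever needed. So I would fix a function $g:S\longrightarrow X$ that is $\ic\mbox{-}$convergent to some $x\in U$; by Proposition \ref{Pr4} it then suffices to show that $\{s\in S:g(s)\in U\}\in \ic^{+}$.

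First step: since $x\in U\subset Y$ and $Y$ is $\ic\mbox{-}$functional open in $X$, Proposition \ref{Pr4} (together with maximality, or directly Proposition \ref{Pr6}) yields that $M:=\{s\in S:g(s)\in Y\}\in \fc(\ic)$. Define $h:S\longrightarrow Y$ by $h(s)=g(s)$ for $s\in M$ and $h(s)=x$ for $s\in S\setminus M$; this is well defined since $g(s)\in Y$ for $s\in M$ and $x\in Y$. I then claim that $h$ is $\ic\mbox{-}$convergent to $x$ in the subspace $Y$. Indeed, an arbitrary open neighbourhood of $x$ in $Y$ has the form $W\cap Y$ with $W$ open in $X$ and $x\in W$, and one checks directly that $h^{-1}(W\cap Y)=(S\setminus M)\cup\big(M\cap g^{-1}(W)\big)$; since $g$ is $\ic\mbox{-}$convergent to $x$ we have $g^{-1}(W)\in \fc(\ic)$, hence $M\cap g^{-1}(W)\in \fc(\ic)$, and a fortiori $h^{-1}(W\cap Y)\in \fc(\ic)$.

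Second step: since $U$ is $\ic\mbox{-}$functional open in $Y$ and $h:S\longrightarrow Y$ is $\ic\mbox{-}$convergent to $x\in U$, Proposition \ref{Pr4} gives $\{s\in S:h(s)\in U\}\in \ic^{+}$, and by maximality $\{s\in S:h(s)\in U\}\in \fc(\ic)$. Now $M\cap\{s\in S:h(s)\in U\}=\{s\in M:g(s)\in U\}=M\cap g^{-1}(U)$, and being the intersection of two members of $\fc(\ic)$ it again lies in $\fc(\ic)\subset \ic^{+}$. Since $\ic$ is an ideal, the larger set $g^{-1}(U)\supset M\cap g^{-1}(U)$ must then be in $\ic^{+}$, which by Proposition \ref{Pr4} is precisely what it means for $U$ to be $\ic\mbox{-}$functional open in $X$.

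The place where maximality is genuinely needed — and the only delicate point — is the verification that $h$ is $\ic\mbox{-}$convergent in $Y$: the construction of $h$ relies on the fact that $g$ is eventually inside $Y$ modulo $\ic$, that is, on $M\in \fc(\ic)$, not merely $M\in \ic^{+}$. If one only knew $M\in \ic^{+}$, the computation above would give $h^{-1}(W\cap Y)\in \ic^{+}$, which is too weak to conclude $\ic\mbox{-}$convergence; Propositions \ref{Pr4} and \ref{Pr6} bridge this gap exactly because $\ic$ is maximal.
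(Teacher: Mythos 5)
Your proof is correct and takes essentially the same route as the paper's: both exploit maximality to get that the preimage of $Y$ lies in $\fc(\ic)$, redefine the given function off that set to equal the limit point so that it maps into $Y$, and then apply the $\ic$\mbox{-}functional openness of $U$ in $Y$ via Proposition \ref{Pr4}. The only difference is presentational (the paper argues by contradiction, you argue directly), and your closing remark correctly isolates the one place where maximality is indispensable.
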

\begin{proof}
If $U$ is not $\ic\mbox{-}$functional open in $X$ then there exits a mapping $f:S\longrightarrow X,~\ic\mbox{-}$converging to some $a\in U$ and $f^{-1}(U)\in \ic.$ As $Y$ is $\ic\mbox{-}$functional open in $X, $ and $\ic$ is maximal, $f^{-1}(Y)\in \fc(\ic).$ Therefore $f^{-1}(Y\setminus U) = f^{-1}(Y)\setminus f^{-1}(U) \in \fc(\ic).$ Let $F=f^{-1}(Y\setminus U).$ Define a mapping $\phi:S\longrightarrow Y$ by
\begin{equation*}
\phi(s) =
\begin{cases}
f(s) \, &\text{if} \quad s\in F\\
a \, &\text{if} \quad s\not\in F. \\
\end{cases}
\end{equation*}
Evidently $\phi$ is $\ic\mbox{-}$convergent to $a.$ Also $\phi^{-1}(U)\in \ic,$ contradicts that $U$ is $\ic\mbox{-}$functional open in $Y.$
\end{proof}

\begin{Th}
\label{TA7}
Let $X$ be a space and let $\cu$ be a cover of $X$ by $\ic\mbox{-}$functional open sets. Then a mapping $f:X\longrightarrow Y$ is $\ic\mbox{-}$functional continuous if and only if for each $U\in \cu$ the restriction $f|_{U}$ is $\ic\mbox{-}$functional continuous provided $\ic$ is maximal.
\end{Th}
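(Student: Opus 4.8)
The plan is to prove the two implications separately, relying on Lemma \ref{Lm9} and Lemma \ref{LM100} together with one elementary auxiliary fact: an arbitrary union of $\ic\mbox{-}$functional open sets is again $\ic\mbox{-}$functional open. (This follows at once from Lemma \ref{Lm5}: if $O=\bigcup_{\alpha}O_{\alpha}$ with each $O_{\alpha}$ $\ic\mbox{-}$functional open and some $h:S\longrightarrow X\setminus O$ were $\ic\mbox{-}$convergent to $x\in O$, then picking $\alpha$ with $x\in O_{\alpha}$ and noting $X\setminus O\subset X\setminus O_{\alpha}$ contradicts that $O_{\alpha}$ is $\ic\mbox{-}$functional open.)

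For the forward implication, assume $f$ is $\ic\mbox{-}$functional continuous and fix $U\in\cu$. Given an $\ic\mbox{-}$functional open set $V$ in $Y$, the set $f^{-1}(V)$ is $\ic\mbox{-}$functional open in $X$ by hypothesis, and since $U\subset X$, Lemma \ref{Lm9} yields that $(f|_{U})^{-1}(V)=f^{-1}(V)\cap U$ is $\ic\mbox{-}$functional open in $U$. Hence $f|_{U}$ is $\ic\mbox{-}$functional continuous. Note this direction uses neither maximality of $\ic$ nor the assumption that the members of $\cu$ are $\ic\mbox{-}$functional open.

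For the converse, assume each restriction $f|_{U}$, $U\in\cu$, is $\ic\mbox{-}$functional continuous, and let $V$ be $\ic\mbox{-}$functional open in $Y$. For each $U\in\cu$, $\ic\mbox{-}$functional continuity of $f|_{U}$ gives that $f^{-1}(V)\cap U=(f|_{U})^{-1}(V)$ is $\ic\mbox{-}$functional open in $U$; since $U$ itself is $\ic\mbox{-}$functional open in $X$ (being a member of $\cu$), Lemma \ref{LM100} upgrades this to: $f^{-1}(V)\cap U$ is $\ic\mbox{-}$functional open in $X$. Because $\cu$ covers $X$, we have $f^{-1}(V)=\bigcup_{U\in\cu}\bigl(f^{-1}(V)\cap U\bigr)$, which is $\ic\mbox{-}$functional open in $X$ by the auxiliary fact above. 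Thus $f$ is $\ic\mbox{-}$functional continuous, completing the proof.

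The only genuine obstacle is the local-to-global step in the converse, namely passing from $\ic\mbox{-}$functional openness of $f^{-1}(V)\cap U$ inside the subspace $U$ to $\ic\mbox{-}$functional openness inside $X$; this is exactly what Lemma \ref{LM100} supplies, and it is the single place where the hypothesis that $\ic$ is maximal is actually invoked. Everything else is routine manipulation of inverse images together with the characterizations in Lemma \ref{Lm5} and Proposition \ref{Pr4}. (Alternatively, one can bypass Lemma \ref{LM100} and argue the converse directly with Proposition \ref{Pr4}: given $g:S\longrightarrow X$ that is $\ic\mbox{-}$convergent to $x\in f^{-1}(V)$, choose $U\in\cu$ with $x\in U$, set $A=\{s:g(s)\in U\}\in\fc(\ic)$ using maximality, build $h:S\longrightarrow U$ equal to $g$ on $A$ and to $x$ off $A$, check $h$ is $\ic\mbox{-}$convergent to $x$ in $U$, apply $\ic\mbox{-}$functional openness of $f^{-1}(V)\cap U$ in $U$, and then translate back to conclude $\{s:g(s)\in f^{-1}(V)\}\in\ic^{+}$; this again uses maximality at the same point.)
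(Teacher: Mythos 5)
Your proof is correct and follows essentially the same route as the paper's: the forward direction via Lemma \ref{Lm9} applied to $f^{-1}(V)\cap U$, and the converse via Lemma \ref{LM100} (the only place maximality enters) followed by writing $f^{-1}(V)$ as the union $\bigcup_{U\in\cu}(f|_{U})^{-1}(V)$. You are somewhat more careful than the paper in explicitly justifying that arbitrary unions of $\ic$-functional open sets are $\ic$-functional open, a step the paper asserts without comment.
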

\begin{proof}
Let $f:X\longrightarrow Y$ be $\ic\mbox{-}$functional continuous and let $U\in \cu.$ Suppose that $V\subset Y$ is $\ic\mbox{-}$functional open. Then $(f|_{U})^{-1}(V)=f^{-1}(V)\cap U$ is $\ic\mbox{-}$functional open in $U.$ Conversely let the condition hold. Then by Lemma \ref{LM100} for any $\ic\mbox{-}$functional open set $V\subset Y,~f^{-1}(V)\cap U$ is $\ic\mbox{-}$functional open in $X.$ As $X=\bigcup \cu,~ f^{-1}(V)=\displaystyle{\bigcup_{U\in \cu}(f|_{U})^{-1}(V)}$ and is $\ic\mbox{-}$functional open as each is so. \\
\end{proof}

In \cite[Theorem 4.2]{Zhou2}, it was shown that every continuous mapping preserves $\ic\mbox{-}$convergence of sequences and if a mapping preserves $\ic\mbox{-}$convergence of sequences then the mapping is $\ic\mbox{-}$continuous. Here also, similar kind of results hold.
\begin{Prop}
\label{Pr7}
Let $X, Y$ be two spaces and $f:X\longrightarrow Y$ be a mapping.
\begin{itemize}
\item [$(i)$] If $f$ is continuous then $f$ preserves $\ic\mbox{-}$functional convergence.
\item [$(ii)$] If $f$ preserves $\ic\mbox{-}$functional convergence then $f$ is $\ic\mbox{-}$functional continuous.
\end{itemize}
\end{Prop}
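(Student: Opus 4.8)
The plan is to treat the two parts separately; both reduce to short preimage computations once the right reformulation of $\ic\mbox{-}$functional openness is invoked.

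For part $(i)$, I would argue straight from the definition of $\ic\mbox{-}$convergence of a function. Let $g:S\longrightarrow X$ be $\ic\mbox{-}$convergent to $x$, and let $V$ be an arbitrary open set in $Y$ containing $f(x)$. By continuity of $f$, the set $f^{-1}(V)$ is open in $X$ and contains $x$, so the hypothesis on $g$ yields $\{s\in S:g(s)\in f^{-1}(V)\}\in\fc(\ic)$. Since $\{s\in S:g(s)\in f^{-1}(V)\}=\{s\in S:(f\circ g)(s)\in V\}$, it follows that $(f\circ g)^{-1}(V)\in\fc(\ic)$ for every such $V$; hence $f\circ g$ is $\ic\mbox{-}$convergent to $f(x)$, which is exactly the statement that $f$ preserves $\ic\mbox{-}$functional convergence.

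For part $(ii)$, I would use the characterization of $\ic\mbox{-}$functional open sets given in Lemma \ref{Lm5}. Let $U\subset Y$ be $\ic\mbox{-}$functional open; the goal is to show $f^{-1}(U)$ is $\ic\mbox{-}$functional open in $X$. Suppose to the contrary that it is not. Then by Lemma \ref{Lm5} there is a function $h:S\longrightarrow X\setminus f^{-1}(U)$ that is $\ic\mbox{-}$convergent to some $x\in f^{-1}(U)$. Because $f$ preserves $\ic\mbox{-}$functional convergence, $f\circ h$ is $\ic\mbox{-}$convergent to $f(x)\in U$. But $h(s)\notin f^{-1}(U)$ for every $s\in S$, so $(f\circ h)(s)\notin U$ for every $s$; that is, $f\circ h$ is a function from $S$ into $Y\setminus U$ which is $\ic\mbox{-}$convergent to a point of $U$, contradicting the $\ic\mbox{-}$functional openness of $U$ (again via Lemma \ref{Lm5}). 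Hence $f^{-1}(U)$ is $\ic\mbox{-}$functional open in $X$, and $f$ is $\ic\mbox{-}$functional continuous.

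Neither step poses a real obstacle; the only point that needs a moment's care is in $(ii)$, namely checking that the composite $f\circ h$ takes \emph{all} of its values outside $U$, which is immediate from $h$ avoiding $f^{-1}(U)$ entirely. The argument is the functional analogue of \cite[Theorem 4.2]{Zhou2}: passing from sequences in $X$ to functions on $S$, and from cofinite (resp. density-one) sets to members of $\fc(\ic)$, changes nothing essential in the reasoning.
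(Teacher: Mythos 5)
Your argument is correct and is precisely the routine adaptation of \cite[Theorem 4.2]{Zhou2} that the paper intends, since it states Proposition \ref{Pr7} without proof after noting that ``similar kind of results hold'': part $(i)$ is the direct preimage computation from the definition of $\ic$-convergence, and part $(ii)$ correctly combines the convergence-preservation hypothesis with the characterization of $\ic$-functional open sets in Lemma \ref{Lm5}. Nothing is missing.
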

The examples given below, show that the converses of preceding Proposition are not generally true.
\begin{Ex}
Let $\ic$ be a maximal ideal. Take $X=\sum_{S}(\ic)$ as in Example \ref{Ex1} and let $Y=X$ be endowed with the discrete topology. Let $f:X\longrightarrow Y$ be the identity mapping.

Clearly $i:S\longrightarrow X,$ the inclusion function is not convergent to $\infty.$ Let $S^{\prime}\subset S$ and $i|_{S^{\prime}}.$ If $S^{\prime}\in \ic$ then  $i$ can not converge to $\infty.$

Otherwise take an infinite $S^{\prime\prime}\subset S^{\prime}$ satisfying $|S^{\prime}\setminus S^{\prime\prime}|=|S^{\prime}|.$ If $S^{\prime\prime}\in \ic$ then $(S\setminus S^{\prime\prime}) \cup \{\infty\}$ is
 a neighbourhood of $\infty.$ So $i|_{S^{\prime}}$ again can not be convergent to $\infty.$

 Finally if $S^{\prime\prime}\in \fc(\ic)$ then $S^{\prime\prime}\cup \{\infty\}$ is a neighbourhood of $\infty$ but it does not
contain all but finitely many terms of $i|_{S^{\prime}}.$ So $i|_{S^{\prime}}$ is not convergent to $\infty.$

Therefore there is no convergent function from $S$ to $X$ except for eventual constant mappings. So $f$ preserves $\ic_{fin}\mbox{-}$functional convergence trivially. Thus by Proposition \ref{Pr7}, $f$ is also functional continuous. But evidently $f$ is not continuous.
\end{Ex}

\begin{Ex}
Let $X=\sum_{S}(\ic)$ and let $Y=\{1,0\}$ be endowed with discrete topology. Also let $\ic$ be a non-maximal ideal of $S.$ Then there is $A\subset S$ for which both $A\in \ic^{+}$ and $S\setminus A \in \ic^{+}.$ Define a mapping $g:X\longrightarrow Y$ by $g(x)=1$ if $x\in A$ and $g(x)=0$ otherwise. As $(S\setminus A)\cup \{\infty\}$ is $\ic\mbox{-}$functional open, so $g$ is $\ic\mbox{-}$functional continuous. But $g$ does not preserve $\ic\mbox{-}$functional convergence.
\end{Ex}

Let $f:X\longrightarrow Y$ be an $\ic\mbox{-}$functional continuous mapping and let $g:S\longrightarrow X$ be $\ic\mbox{-}$convergent to $x.$ If $V\subset Y$ is an $\ic\mbox{-}$functional open set containing $f(x)$ then by Proposition \ref{Pr4}, $\{s\in S:g(s)\in f^{-1}(V)\}\in \ic^{+}$ and thus $\{s\in S:(f\circ g)(s) \in V\}\in \ic^{+}.$ This observation leads to the following result immediately.
\begin{Th}
\label{TA8}
Let $\ic$ be a maximal ideal of $S.$ Then a mapping $f:X\longrightarrow Y$ is $\ic\mbox{-}$functional continuous if and only if it preserves $\ic\mbox{-}$functional convergence.
\end{Th}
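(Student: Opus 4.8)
The plan is to establish the two implications separately, noting that the substance of each is already available earlier in the paper. For the direction ``$f$ preserves $\ic\mbox{-}$functional convergence $\Rightarrow$ $f$ is $\ic\mbox{-}$functional continuous'' I would simply invoke Proposition~\ref{Pr7}$(ii)$, which is valid for any admissible ideal; maximality plays no role in this direction, so nothing new needs to be said.

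For the converse, assume $f:X\longrightarrow Y$ is $\ic\mbox{-}$functional continuous and let $g:S\longrightarrow X$ be $\ic\mbox{-}$convergent to $x$. The goal is to verify that $f\circ g$ is $\ic\mbox{-}$convergent to $f(x)$, i.e. that $\{s\in S:(f\circ g)(s)\in V\}\in\fc(\ic)$ for every \emph{open} neighbourhood $V$ of $f(x)$. First I would fix such a $V$; since every open set is $\ic\mbox{-}$functional open and $f$ is $\ic\mbox{-}$functional continuous, $f^{-1}(V)$ is $\ic\mbox{-}$functional open in $X$, and $x\in f^{-1}(V)$. Applying Proposition~\ref{Pr4} with $A=f^{-1}(V)$ then gives $\{s\in S:(f\circ g)(s)\in V\}=\{s\in S:g(s)\in f^{-1}(V)\}\in\ic^{+}$ — this is precisely the observation recorded in the paragraph preceding the statement. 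The last ingredient is maximality: for a maximal ideal one has $\ic^{+}=\fc(\ic)$, so this set in fact lies in $\fc(\ic)$, which is what was required. Hence $f\circ g$ is $\ic\mbox{-}$convergent to $f(x)$, i.e. $f$ preserves $\ic\mbox{-}$functional convergence.

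I do not expect a genuine obstacle: the only delicate point is the single appeal to maximality, which upgrades membership in the coideal $\ic^{+}$ (all that $\ic\mbox{-}$functional openness together with Proposition~\ref{Pr4} supplies) to membership in the dual filter $\fc(\ic)$ (what $\ic\mbox{-}$convergence of $f\circ g$ actually demands). It is exactly this gap between $\ic^{+}$ and $\fc(\ic)$ when $\ic$ is not maximal that accounts for the examples above, in which an $\ic\mbox{-}$functional continuous mapping need not preserve $\ic\mbox{-}$functional convergence; everything else amounts to unwinding the definitions of $\ic\mbox{-}$functional continuity, $\ic\mbox{-}$functional open set, and $\ic\mbox{-}$convergence.
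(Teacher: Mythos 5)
Your proposal is correct and follows essentially the same route as the paper: the paper records precisely your key observation (that $\ic$-functional continuity plus Proposition~\ref{Pr4} puts $\{s\in S:(f\circ g)(s)\in V\}$ in $\ic^{+}$) in the paragraph immediately preceding the theorem, and then, as you do, uses maximality to identify $\ic^{+}$ with $\fc(\ic)$ and Proposition~\ref{Pr7}$(ii)$ for the other direction. No gaps.
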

For the next result we recall the following definition. $\ic$ is called a $P$-ideal if for any $(A_n)_{n \in \omega}$ from $\mathcal{F}(\mathcal{I})$ there is $A \in \mathcal{F}$ such that $A \setminus A_n$ is finite for each n \cite{CS}.
\begin{Th}
\label{TA9}
Let $\ic$ be a $P$-ideal and $X$ be a first-countable space. Then $f:X\longrightarrow Y$ is $\ic\mbox{-}$functional continuous if and only if it preserves $\ic\mbox{-}$functional convergence.
\end{Th}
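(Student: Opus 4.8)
One implication is immediate: a mapping that preserves $\ic\mbox{-}$functional convergence is $\ic\mbox{-}$functional continuous by Proposition \ref{Pr7}$(ii)$, and this needs no hypothesis on $\ic$ or $X$. So the substance is the converse. Assume $f$ is $\ic\mbox{-}$functional continuous, fix $g:S\longrightarrow X$ that is $\ic\mbox{-}$convergent to $x$, and fix an open neighbourhood $V$ of $f(x)$; I want $\{s\in S:(f\circ g)(s)\in V\}\in\mathcal{F}(\ic)$, which for arbitrary $V$ says exactly that $f\circ g$ is $\ic\mbox{-}$convergent to $f(x)$. Since $V$ is open it is $\ic\mbox{-}$functional open, so $f^{-1}(V)$ is $\ic\mbox{-}$functional open in $X$ and contains $x$. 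Thus the theorem reduces to a sharpening of Proposition \ref{Pr4}$(ii)$ under the present hypotheses, namely to showing that \emph{if $\ic$ is a $P$-ideal, $X$ is first-countable, $g:S\longrightarrow X$ is $\ic\mbox{-}$convergent to $x$, and $O\subseteq X$ is $\ic\mbox{-}$functional open with $x\in O$, then $\{s\in S:g(s)\in O\}\in\mathcal{F}(\ic)$} --- whereas Proposition \ref{Pr4} by itself only delivers membership in $\ic^{+}$. Applying the displayed statement to $O=f^{-1}(V)$ then finishes the argument.

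To prove the displayed statement I would first squeeze out of the hypotheses that $g$ is in fact $\ic^{*}\mbox{-}$convergent to $x$: fix a decreasing neighbourhood base $W_{0}\supseteq W_{1}\supseteq\cdots$ at $x$, set $A_{n}=\{s\in S:g(s)\in W_{n}\}\in\mathcal{F}(\ic)$, and apply the $P$-property to $(A_{n})_{n\in\omega}$ to obtain $A\in\mathcal{F}(\ic)$ with $A\setminus A_{n}$ finite for every $n$; then the function that equals $g$ on $A$ and $x$ off $A$ is convergent to $x$ in the ordinary sense. Two by-products are needed. First, $A\cap\bigcap_{n}A_{n}=A\setminus\bigcup_{n}(A\setminus A_{n})$ is $A$ minus a countable set, and it is contained in $\{s:g(s)\in\bigcap_{n}W_{n}\}$. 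Second, $\bigcap_{n}W_{n}\subseteq O$: otherwise the constant function with a value in $\bigl(\bigcap_{n}W_{n}\bigr)\setminus O$ would be $\ic\mbox{-}$convergent to $x$ while mapping $S$ into $X\setminus O$, contradicting that $X\setminus O$ is $\ic\mbox{-}$functional closed.

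Now suppose for contradiction $E:=\{s\in S:g(s)\notin O\}\notin\ic$. From $\bigcap_{n}W_{n}\subseteq O$ we get $\{s:g(s)\in\bigcap_{n}W_{n}\}\subseteq S\setminus E$, hence $E\subseteq(S\setminus A)\cup\bigcup_{n}(A\setminus A_{n})$; since $S\setminus A\in\ic$ and $E\notin\ic$, the set $E':=E\cap A$ is then a \emph{countable} set lying in $\ic^{+}$, and $g$ maps all of $E'$ into $X\setminus O$. On $E'$ the restriction $g|_{E'}$ is $\ic\mbox{-}$convergent to $x$ for the induced (again $P$-) ideal, all its values avoid $O$, and $x$ is a limit point of $X\setminus O$ because each $E\cap A_{n}$ is infinite, so each $W_{n}\cap(X\setminus O)\neq\emptyset$ and we may pick $z_{n}\in W_{n}\cap(X\setminus O)$. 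One then assembles a single function $S\longrightarrow X\setminus O$ that is $\ic\mbox{-}$convergent to $x$ --- retaining $g$ where it already avoids $O$, routing the remaining indices through the points $z_{n}$ according to how deep $g(s)$ lies in the base, and dealing separately with the ``core'' set $\{s:g(s)\in\bigcap_{n}W_{n}\}$ --- which contradicts that $X\setminus O$ is $\ic\mbox{-}$functional closed. Hence $E\in\ic$, as required, and reversing the reduction gives the theorem.

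\textbf{Main obstacle.} The genuinely delicate step is precisely the upgrade ``$\ic^{+}\rightsquigarrow\mathcal{F}(\ic)$'' in the displayed statement. In Theorem \ref{TA8} this upgrade is free because for a maximal ideal $\ic^{+}=\mathcal{F}(\ic)$, so Proposition \ref{Pr4} already suffices; here one must produce it by hand, and the weight of the argument lands on the set of $s$ with $g(s)\in\bigcap_{n}W_{n}$, where $g$ converges to $x$ only trivially yet the value must still be pushed out of $O$ without spoiling $\ic\mbox{-}$convergence. It is exactly in controlling this core set --- reducing it, via the $P$-ideal set $A$, to a countable remainder and then re-threading convergence through it --- that first-countability (to secure a countable base) and the $P$-property (to absorb the countably many error sets $A\setminus A_{n}$) are both essential; this is where I would expect to spend the real effort, and where one must be most careful about the book-keeping.
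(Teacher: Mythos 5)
Your reduction is the right one and is in the same spirit as the paper's own argument: the backward implication is Proposition \ref{Pr7}$(ii)$, and the forward one comes down to upgrading the conclusion of Proposition \ref{Pr4}$(ii)$ from $\ic^{+}$ to $\fc(\ic)$ for the $\ic$-functional open set $O=f^{-1}(V)$. Your observation that $\bigcap_{n}W_{n}\subseteq O$ (via constant functions) is correct and useful. But the gap sits exactly where you yourself locate the ``real effort,'' and it cannot be closed: the witnessing function $h:S\longrightarrow X\setminus O$ you want to assemble need not exist. Indeed, any $h:S\longrightarrow X$ that is $\ic$-convergent to $x$ satisfies $h^{-1}(X\setminus W_{n})\in\ic$ for every $n$; if moreover $h(S)\subseteq X\setminus O\subseteq X\setminus\bigcap_{n}W_{n}$, then $S=\bigcup_{n}h^{-1}(X\setminus W_{n})$ must be a countable union of members of $\ic$. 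For $S$ uncountable and $\ic=\ic_{fin}$ (which is a $P$-ideal, with witness $A=S$) this is impossible, so $X\setminus O$ can be $\ic$-functional closed ``for free'' while $E=g^{-1}(X\setminus O)$ is still infinite, hence in $\ic^{+}$; your core set is then a set in $\ic^{+}$ that cannot be exhausted by countably many members of $\ic$, and no amount of bookkeeping produces the contradiction.

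Concretely: take $S=\omega_{1}$, $\ic=\ic_{fin}$, and $X=\{x\}\cup\{z_{n}:n\in\bn\}$ a convergent sequence with limit $x$ and base $W_{n}=\{x\}\cup\{z_{m}:m\geq n\}$. A function $\omega_{1}\longrightarrow\{z_{n}:n\in\bn\}$ convergent to $x$ would force each $h^{-1}(z_{m})$ to be finite and hence $\omega_{1}$ to be countable, so $\{z_{n}:n\in\bn\}$ is $\ic$-functional closed and $O=\{x\}$ is $\ic$-functional open; yet $g(n)=z_{n}$ for $n<\omega$ and $g(\alpha)=x$ otherwise is convergent to $x$ with $g^{-1}(O)=\omega_{1}\setminus\omega\notin\fc(\ic)$. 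Composing with $f:X\longrightarrow\{0,1\}$ (discrete), $f(x)=1$, $f(z_{n})=0$, yields an $\ic$-functional continuous mapping that does not preserve $\ic$-functional convergence, so the theorem itself fails for uncountable $S$. You should be aware that the paper's proof founders on the very same point: it asserts that a convergent function lies cofinitely inside any functional open set containing its limit, which the example above refutes. On the positive side, your scheme does close when $S$ is countable: enumerating $S$ and routing all of $S\setminus E$ (core included) through the points $z_{n}$ leaves only finitely many indices outside each $W_{n}$, and then the $P$-ideal hypothesis is not even needed --- only first countability and the countability of $S$.
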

\begin{proof}
From \cite{CS}, it follows that $\ic\mbox{-}$convergence implies $\ic^{*}\mbox{-}$convergence of functions, as $\ic$ is a $P$-ideal. Let $f:X\longrightarrow Y$ be $\ic\mbox{-}$functional continuous and let $g:S\longrightarrow X$ be $\ic\mbox{-}$convergent to $x.$ Then there is $A\in \ic$ such that $g\downharpoonleft_{S\setminus A}\longrightarrow X$ is convergent to $x.$ Let $U$ be an open neighbourhood of $f(x).$ Then $f^{-1}(U)$ is $\ic\mbox{-}$functional open in $X,$ and so is a functional open set containing $x.$ Consequently $g(s)\in f^{-1}(U)$ for all $s\in S\setminus (A\cup F)$ (for a suitable finite subset $F$ of $S$) and hence $(f\circ g)(s)\in U$ for all $s\in S\setminus (A\cup F).$ By admissibility of $\ic$ it follows that $\{s\in S:(f\circ g)(s)\in U\}\in \fc(\ic).$

The converse result follows directly from Proposition \ref{Pr7}.
\end{proof}
Next we investigate the interrelationships between the notions of continuity and
$\ic\mbox{-}f\mbox{-}$continuity.
\begin{Th}
\label{TA10}
Let $f$ be a mapping from an $\ic\mbox{-}$functional space $X$ to another space $Y.$ Then $f$ is continuous if and only if $f$ is $\ic\mbox{-}$functional continuous.
\end{Th}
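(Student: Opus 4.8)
The plan is to prove the two implications separately, observing that only the converse uses the hypothesis that $X$ is an $\ic\mbox{-}$functional space.

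For the forward implication, suppose $f$ is continuous. By Proposition \ref{Pr7}$(i)$, $f$ preserves $\ic\mbox{-}$functional convergence, and then by Proposition \ref{Pr7}$(ii)$, $f$ is $\ic\mbox{-}$functional continuous. This half requires no assumption on $X$ whatsoever; it is just a concatenation of the two parts of Proposition \ref{Pr7}.

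For the converse, the key point is to restate the defining property of an $\ic\mbox{-}$functional space in the language of open sets. By taking complements in Definition \ref{D1}$(ii),(iii)$, the assertion ``$X$ is $\ic\mbox{-}$functional'' is equivalent to ``every $\ic\mbox{-}$functional open subset of $X$ is open''. Granting this reformulation, let $U\subset Y$ be open. Since every open set is $\ic\mbox{-}$functional open (as already noted in the excerpt), $U$ is $\ic\mbox{-}$functional open in $Y$; since $f$ is $\ic\mbox{-}$functional continuous, $f^{-1}(U)$ is $\ic\mbox{-}$functional open in $X$; and since $X$ is $\ic\mbox{-}$functional, $f^{-1}(U)$ is open in $X$. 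As $U$ ranged over all open subsets of $Y$, it follows that $f$ is continuous.

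There is no genuine obstacle in this argument; the only step deserving a line of care is the complementation argument giving the open-set reformulation of the $\ic\mbox{-}$functional property, together with the trivial fact that preimages commute with complements, so that $\ic\mbox{-}$functional continuity—stated via $\ic\mbox{-}$functional open sets in Definition \ref{D100}$(ii)$—is exactly what is needed to pull an open (hence $\ic\mbox{-}$functional open) set in $Y$ back to an $\ic\mbox{-}$functional open set in $X$.
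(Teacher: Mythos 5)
Your proof is correct and follows the same route as the paper: the forward implication is the concatenation of the two parts of Proposition \ref{Pr7}, and the converse is the observation that open sets are $\ic\mbox{-}$functional open together with the (complementation) reformulation of the $\ic\mbox{-}$functional property as ``$\ic\mbox{-}$functional open implies open''. The paper states the converse in one line; you have merely written out the same argument in full.
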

\begin{proof}
Let $f:X\longrightarrow Y$ be continuous. Then by Proposition \ref{Pr7}, $f$ is $\ic\mbox{-}$functional continuous.\\
Since every open set is $\ic\mbox{-}$functional open and $X$ is an $\ic\mbox{-}$functional space the converse follows immediately.
\end{proof}

\begin{Cor}
Let $f$ be a mapping from a functional space $X$ to another space $Y.$ Then following are equivalent.\\
\noindent $(1)$ $f$ is continuous.\\
\noindent $(2)$ $f$ preserves $\ic\mbox{-}$functional convergence.\\
\noindent $(3)$ $f$ is $\ic\mbox{-}$functional continuous.\\
\noindent $(4)$ $f$ is functional continuous.
\end{Cor}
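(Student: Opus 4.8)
The plan is to deduce the corollary directly from Theorem~\ref{TA10}, applied once to the ideal $\ic$ and once to the ideal $\ic_{fin}$, together with Proposition~\ref{Pr7}. The preliminary observation I would record is that a functional space is automatically an $\ic\mbox{-}$functional space for \emph{every} admissible ideal $\ic$ of $S$. Indeed, since $\ic$ is admissible it contains all finite subsets, so every co-finite set lies in $\fc(\ic)$, and hence ordinary convergence of a function implies its $\ic\mbox{-}$convergence; consequently every $\ic\mbox{-}$functional closed subset of $X$ is functional closed (this is already noted in the text just after Definition~\ref{D1}), and so if $X$ is a functional space then every $\ic\mbox{-}$functional closed subset of $X$ is closed, i.e.\ $X$ is $\ic\mbox{-}$functional. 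Since ordinary convergence of a function is precisely its $\ic_{fin}\mbox{-}$convergence, the notions ``functional closed/open/space/continuous'' coincide verbatim with ``$\ic_{fin}\mbox{-}$functional closed/open/space/continuous''; in particular the same observation shows that $X$ is also an $\ic_{fin}\mbox{-}$functional space.

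Granting this, the hard equivalences come for free. Applying Theorem~\ref{TA10} to $\ic$ gives that $f$ is continuous if and only if $f$ is $\ic\mbox{-}$functional continuous, which is $(1)\Leftrightarrow(3)$. Applying the same theorem to $\ic_{fin}$ (admissible because $S$ is infinite) and translating through the identification above gives that $f$ is continuous if and only if $f$ is functional continuous, which is $(1)\Leftrightarrow(4)$.

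It remains only to fit $(2)$ into the circle. Proposition~\ref{Pr7}$(i)$ gives $(1)\Rightarrow(2)$ and Proposition~\ref{Pr7}$(ii)$ gives $(2)\Rightarrow(3)$; together with $(3)\Rightarrow(1)$ from the previous paragraph this closes the cycle $(1)\Rightarrow(2)\Rightarrow(3)\Rightarrow(1)$, and hence all four statements are equivalent.

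There is no substantial obstacle here; the argument is essentially bookkeeping. The one step I would not simply wave through is the preliminary paragraph, namely verifying that ``functional'' really is the $\ic_{fin}\mbox{-}$instance of ``$\ic\mbox{-}$functional'' and that the hypothesis of Theorem~\ref{TA10} (that the domain be an $\ic\mbox{-}$functional, resp.\ $\ic_{fin}\mbox{-}$functional, space) is automatically met once $X$ is known to be a functional space. Everything after that is a direct citation of Theorem~\ref{TA10} and Proposition~\ref{Pr7}.
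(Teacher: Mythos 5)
Your proposal is correct and uses essentially the same ingredients as the paper's proof: Proposition \ref{Pr7} for fitting $(2)$ into the cycle, Theorem \ref{TA10} (applied both to $\ic$ and to the finite ideal), and the key observation that a functional space is automatically an $\ic$-functional space because ordinary convergence implies $\ic$-convergence for any admissible $\ic$. The only difference is bookkeeping — you prove $(1)\Leftrightarrow(3)$ and $(1)\Leftrightarrow(4)$ as biconditionals where the paper runs a single cycle $(1)\Rightarrow(2)\Rightarrow(3)\Rightarrow(4)\Rightarrow(1)$ — and you are right that the preliminary identification of ``functional'' with ``$\ic_{fin}$-functional'' is the one step worth writing out.
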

\begin{proof}
\noindent $(1)\Longrightarrow(2)$ follows from the Proposition \ref{Pr7}$(i).$ $(2)\Longrightarrow (3)$ follows directly from Proposition \ref{Pr7}$(ii).$ As each functional space is $\ic\mbox{-}$functional space and continuity implies functional continuity, preceding theorem establishes that $(3)\Longrightarrow (4).$ Finally $(4)\Longrightarrow (1)$ since $X$ is a functional space.
\end{proof}

\begin{Th}
Let $X$ be a sequential space and $\ic$ be as defined in Proposition \ref{Pr3}. Then for a mapping $f:X\longrightarrow Y,~f$ is continuous if and only if $f$ preserves $\ic\mbox{-}$functional convergence.
\end{Th}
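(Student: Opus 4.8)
The plan is to get the forward implication for free: if $f$ is continuous then $f$ preserves $\ic\mbox{-}$functional convergence by Proposition \ref{Pr7}$(i)$, and this uses no special feature of $\ic.$ So the real content is the converse, and there the idea is to reduce continuity of $f$ to the preservation of convergence of sequences, which suffices because $X$ is sequential: if $f$ preserves convergence of sequences then for every closed $C\subseteq Y$ the preimage $f^{-1}(C)$ is sequentially closed, hence closed in the sequential space $X,$ so $f$ is continuous.

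So I would fix a sequence $(x_n:n\in\bn)$ in $X$ with $x_n\to x$ and aim to show $f(x_n)\to f(x).$ The key device is precisely the one used in the proof of Proposition \ref{Pr3}: write $S=\bigcup_{i\in\bn}S_i$ for the partition of Proposition \ref{Pr3} that defines $\ic=\ic_0,$ and let $g:S\longrightarrow X$ be the block-constant function $g(s)=x_i$ for $s\in S_i.$ A short check shows $g$ is $\ic\mbox{-}$convergent to $x$: for an open neighbourhood $U$ of $x$ only finitely many $x_i$ lie outside $U,$ so $\{s\in S:g(s)\notin U\}$ is the union of those finitely many blocks $S_i$ and hence belongs to $\ic_0.$ Applying the hypothesis, $f\circ g$ — which is block-constant with value $f(x_i)$ on $S_i$ — is $\ic\mbox{-}$convergent to $f(x).$ Since a block-constant function is $\ic_0\mbox{-}$convergent to a point precisely when the induced sequence of block-values converges to that point (a union of blocks lies in $\ic_0$ exactly when it meets finitely many of them), this translates back into $f(x_n)\to f(x).$ Concretely, if $f(x_n)\notin V$ for infinitely many $n,$ then $\{s\in S:(f\circ g)(s)\notin V\}$ is a union of infinitely many nonempty blocks, so it is not in $\ic_0,$ contradicting $\ic\mbox{-}$convergence of $f\circ g$ to $f(x).$

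The one point worth flagging — and really the only subtlety — is a small normalization: the implication ``union of infinitely many blocks $\Rightarrow$ not in $\ic_0$'' needs the $S_i$ to be nonempty. This is not a genuine obstacle, since $\ic$ being a proper (admissible) ideal forces $S\notin\ic_0,$ hence infinitely many of the $S_i$ are nonempty, and one may discard the empty blocks and re-index at the outset without changing the ideal. Apart from this the argument is a routine unwinding of the definition of $\ic_0$; in effect it is exactly the ``sequential $\Rightarrow$ $\ic_0\mbox{-}$functional'' mechanism of Proposition \ref{Pr3}, read in the language of convergence-preservation rather than open sets.
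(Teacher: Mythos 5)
Your argument is correct, but it is organized differently from the paper's. For the nontrivial direction the paper never goes back to sequences: it takes an open $V\subset Y$ with $f^{-1}(V)$ not open, invokes Proposition \ref{Pr3} to conclude that $f^{-1}(V)$ is then not $\ic_0$-functional open, extracts via Proposition \ref{Pr4} an \emph{arbitrary} function $g:S\longrightarrow X$ that is $\ic_0$-convergent to a point of $f^{-1}(V)$ with $\{s\in S:g(s)\in f^{-1}(V)\}\in \ic_0$, pushes it forward using the hypothesis, and derives a contradiction with the fact that the open set $V$ is itself $\ic_0$-functional open. You instead prove the stronger intermediate statement that $f$ preserves ordinary convergence of sequences, by re-running the block-constant construction from the proof of Proposition \ref{Pr3} and translating $\ic_0$-convergence of a block-constant function back into convergence of the sequence of block values, and then conclude by the classical fact that a sequentially continuous map on a sequential space is continuous. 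Your route is more self-contained (it does not need Proposition \ref{Pr4}) and yields the extra information that $f$ is sequentially continuous; the paper's route is shorter because it uses Propositions \ref{Pr3} and \ref{Pr4} as black boxes and only needs one-way information about a single witnessing function rather than the two-way dictionary between sequences and block-constant functions. Your normalization about empty blocks is a fair point and is correctly resolved: properness of $\ic_0$ forces infinitely many nonempty cells, and discarding the empty ones does not change the ideal.
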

\begin{proof}
Let $V\subset Y$ be an open set. If $f^{-1}(V)$ is not open then $f^{-1}(V)$ is not $\ic\mbox{-}$functional open (by Proposition \ref{Pr3}). Then there is a function $g:S\longrightarrow X$ which is $\ic\mbox{-}$convergent to $x\in f^{-1}(V)$ such that $\{s\in S:g(s)\in f^{-1}(V)\}\in \ic.$ So $f\circ g:S\longrightarrow Y$ is $\ic\mbox{-}$convergent to $f(x).$ But $\{s\in S:g(s)\in f^{-1}(V)\}\in \ic\Longrightarrow \{s\in S:(f\circ g)(s)\in V\}\in \ic,$ which contradicts that $V$ is $\ic\mbox{-}$functional open. Converse is obvious.
\end{proof}

\begin{Prop}
\label{P100}
If $f:X\longrightarrow Y$ preserves $\jc\mbox{-}$functional convergence for each $\jc\in \Theta(\ic)$ then $f$ preserves $\ic\mbox{-}$functional convergence.
\end{Prop}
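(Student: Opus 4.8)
The plan is to reduce everything to Proposition~\ref{Pr1}, which characterizes $\ic\mbox{-}$convergence of a function in terms of $\jc\mbox{-}$convergence over all maximal ideals $\jc\in\Theta(\ic)$. This works because $\ic=\bigcap_{\jc\in\Theta(\ic)}\jc$, so a membership condition ``$A\in\ic$'' is equivalent to ``$A\in\jc$ for every $\jc\in\Theta(\ic)$'', and in particular ``$A\in\fc(\ic)$'' is equivalent to ``$A\in\fc(\jc)$ for every $\jc\in\Theta(\ic)$''.

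First I would take an arbitrary function $g:S\longrightarrow X$ that is $\ic\mbox{-}$convergent to some $x\in X$; the goal is to show $f\circ g:S\longrightarrow Y$ is $\ic\mbox{-}$convergent to $f(x)$. By Proposition~\ref{Pr1}, the hypothesis that $g$ is $\ic\mbox{-}$convergent to $x$ gives that $g$ is $\jc\mbox{-}$convergent to $x$ for each $\jc\in\Theta(\ic)$. Now I would invoke the assumption: since $f$ preserves $\jc\mbox{-}$functional convergence for every such $\jc$, it follows that $f\circ g$ is $\jc\mbox{-}$convergent to $f(x)$ for each $\jc\in\Theta(\ic)$.

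Finally, applying Proposition~\ref{Pr1} in the other direction to the function $f\circ g$, the fact that $f\circ g$ is $\jc\mbox{-}$convergent to $f(x)$ for every $\jc\in\Theta(\ic)$ yields that $f\circ g$ is $\ic\mbox{-}$convergent to $f(x)$. Since $g$ and $x$ were arbitrary, $f$ preserves $\ic\mbox{-}$functional convergence, completing the argument.

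There is no real obstacle here: the entire content is packaged in Proposition~\ref{Pr1}, and the proof is a two-line sandwich argument using it once in each direction (once on $g$, once on $f\circ g$). The only point worth stating carefully is that the definition of ``preserves $\jc\mbox{-}$functional convergence'' is applied with the \emph{same} function $g$ for every $\jc$, so no choice or diagonalization is involved; the quantifier over $\jc\in\Theta(\ic)$ simply passes through.
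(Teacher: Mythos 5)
Your proof is correct, and it is evidently the intended argument: the paper states Proposition~\ref{P100} without proof precisely because it follows by sandwiching through Proposition~\ref{Pr1} (applied once to $g$ and once to $f\circ g$), exactly as you do. The one point worth noting, which you handle correctly, is that the hypothesis is invoked for the same $g$ at every $\jc\in\Theta(\ic)$, so no choice is needed.
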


\begin{Cor}
If $f:X\longrightarrow Y$ is $\jc\mbox{-}$functional continuous for each $\jc \in \Theta(\ic)$ then $f$ is $\ic\mbox{-}$functional continuous.
\end{Cor}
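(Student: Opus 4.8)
The plan is to reduce the statement to Proposition~\ref{P100} by way of Theorem~\ref{TA8}. First I would recall that, by the very definition of $\Theta(\ic)$, every $\jc\in\Theta(\ic)$ is a \emph{maximal} ideal of $S$. Therefore Theorem~\ref{TA8} is available for each such $\jc$: a mapping $f:X\longrightarrow Y$ is $\jc\mbox{-}$functional continuous if and only if it preserves $\jc\mbox{-}$functional convergence. Consequently the hypothesis ``$f$ is $\jc\mbox{-}$functional continuous for each $\jc\in\Theta(\ic)$'' is equivalent to ``$f$ preserves $\jc\mbox{-}$functional convergence for each $\jc\in\Theta(\ic)$''.

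Next I would apply Proposition~\ref{P100} verbatim to the family $\{\jc:\jc\in\Theta(\ic)\}$: since $f$ preserves $\jc\mbox{-}$functional convergence for every $\jc\in\Theta(\ic)$, it follows that $f$ preserves $\ic\mbox{-}$functional convergence. Finally, Proposition~\ref{Pr7}$(ii)$ converts this back into continuity in the functional sense, yielding that $f$ is $\ic\mbox{-}$functional continuous, which is exactly the claim. So the argument is the short chain: Theorem~\ref{TA8} (applied to each $\jc$) $\Rightarrow$ Proposition~\ref{P100} $\Rightarrow$ Proposition~\ref{Pr7}$(ii)$.

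I do not expect any genuine obstacle here; the corollary is a formal consequence of results already established. The single point deserving a moment's care is the legitimacy of invoking Theorem~\ref{TA8} for each index $\jc$, i.e. that the members of $\Theta(\ic)$ are indeed maximal ideals — but this is immediate from the definition of $\Theta(\ic)$ given in the preliminaries, so nothing further is needed.
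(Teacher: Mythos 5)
Your proposal is correct and follows essentially the same route as the paper, whose proof simply cites Propositions \ref{Pr7} and \ref{P100}. In fact your version is slightly more careful: you make explicit the appeal to Theorem \ref{TA8} (legitimate because every $\jc\in\Theta(\ic)$ is maximal) needed to pass from ``$\jc$-functional continuous'' to ``preserves $\jc$-functional convergence'', a step the paper's one-line proof leaves implicit and which Proposition \ref{Pr7} alone does not supply.
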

\begin{proof}
The result follows from Proposition \ref{Pr7} and  \ref{P100}.
\end{proof}

\begin{Ex}
There exists a mapping which preserves $\ic_{fin}\mbox{-}$functional convergence but is not $\jc\mbox{-}$functional continuous for $\jc\in \Theta(\ic_{fin}).$ Let $X=\sum_{S}(\jc)$ and let $Y=S,$ endowed with discrete topology. Define a mapping $f:X\longrightarrow Y$ by $f(s)=s$ if $s\in S$ and $f(\infty)=a$ for some particular $a\in S.$ There is no function from $S$ to $X$ which is convergent. Hence $f$ preserves $\ic_{fin}\mbox{-}$functional convergence but is not $\jc\mbox{-}$functional continuous since $S\setminus \{a\}$ is $\jc\mbox{-}$functional closed and $f^{-1}(S\setminus \{a\})=S$ is not $\jc\mbox{-}$functional closed in $X.$\\
\end{Ex}


\begin{Lemma}
\label{Lm50}
Let $X=\displaystyle{\prod_{i\in \Lambda}X_{i}}$ have the product topology. Then $\pi_{i}:X\longrightarrow X_{i}$ is $\ic\mbox{-}$functional continuous for each $i\in \Lambda.$
\end{Lemma}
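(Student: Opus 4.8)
The plan is to reduce the statement to the convergence--preserving criterion of Proposition \ref{Pr7}$(ii)$. Concretely, I would first show that each projection $\pi_{i}$ preserves $\ic\mbox{-}$functional convergence, and then deduce $\ic\mbox{-}$functional continuity from Proposition \ref{Pr7}$(ii)$.

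For the first step, let $g:S\longrightarrow X$ be an arbitrary function that is $\ic\mbox{-}$convergent to a point $x=(x_{j})_{j\in \Lambda}\in X$. By Lemma \ref{Lm6}, $\ic\mbox{-}$convergence of $g$ to $x$ is equivalent to $\pi_{j}\circ g$ being $\ic\mbox{-}$convergent to $x_{j}$ for every $j\in \Lambda$; in particular $\pi_{i}\circ g$ is $\ic\mbox{-}$convergent to $x_{i}=\pi_{i}(x)$. Since $g$ and $x$ were arbitrary, this is exactly the statement that $\pi_{i}$ preserves $\ic\mbox{-}$functional convergence. Applying Proposition \ref{Pr7}$(ii)$ then yields that $\pi_{i}$ is $\ic\mbox{-}$functional continuous, and since $i\in \Lambda$ was arbitrary the lemma follows.

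There is essentially no obstacle here: the content is already packaged into Lemma \ref{Lm6}, and the whole argument is a one-line invocation of it followed by Proposition \ref{Pr7}$(ii)$. If one preferred a proof not routed through convergence preservation, one could argue directly via Proposition \ref{Pr4}$(ii)$: given an $\ic\mbox{-}$functional open $V\subset X_{i}$, take any $g:S\longrightarrow X$ that is $\ic\mbox{-}$convergent to some $x\in \pi_{i}^{-1}(V)$; by Lemma \ref{Lm6} the function $\pi_{i}\circ g$ is $\ic\mbox{-}$convergent to $\pi_{i}(x)\in V$, so $\{s\in S:(\pi_{i}\circ g)(s)\in V\}\in \ic^{+}$, and this set coincides with $\{s\in S:g(s)\in \pi_{i}^{-1}(V)\}$, whence $\pi_{i}^{-1}(V)$ is $\ic\mbox{-}$functional open again by Proposition \ref{Pr4}. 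Both routes are short; I would present the convergence-preservation one as it is the cleanest.
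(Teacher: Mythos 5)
Your proof is correct and matches the paper's approach: the paper's entire proof reads ``Follows from Lemma \ref{Lm6}'', and your argument (Lemma \ref{Lm6} gives that $\pi_i$ preserves $\ic$-functional convergence, then Proposition \ref{Pr7}$(ii)$ yields $\ic$-functional continuity) is exactly the natural fleshing-out of that one-liner. Your alternative route via Proposition \ref{Pr4} is also valid but adds nothing essential.
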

\begin{proof}
Follows from Lemma \ref{Lm6}.
\end{proof}
\begin{Prop}
\label{P200}
Let $X=\displaystyle{\prod_{i\in \Lambda}X_{i}}$ have the product topology and let $Y$ be a space. Then a mapping $f:Y\longrightarrow X$ is $\ic\mbox{-}$functional continuous if and only if $\pi_{i}\circ f$ is so for each $i\in \Lambda$ provided $\ic$ is maximal.
\end{Prop}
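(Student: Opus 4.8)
The plan is to reduce everything to the convergence characterisation of $\ic\mbox{-}$functional continuity, namely Theorem \ref{TA8}, together with the coordinatewise description of $\ic\mbox{-}$convergence in a product, Lemma \ref{Lm6}.

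For the ``only if'' direction I would not even need maximality. Each projection $\pi_{i}:X\longrightarrow X_{i}$ is $\ic\mbox{-}$functional continuous by Lemma \ref{Lm50}, and $\ic\mbox{-}$functional continuity is stable under composition: if $h:X\longrightarrow Z$ is $\ic\mbox{-}$functional continuous and $V\subset Z$ is $\ic\mbox{-}$functional open, then $h^{-1}(V)$ is $\ic\mbox{-}$functional open in $X$, so for an $\ic\mbox{-}$functional continuous $f:Y\longrightarrow X$ the set $(h\circ f)^{-1}(V)=f^{-1}(h^{-1}(V))$ is $\ic\mbox{-}$functional open in $Y$. Taking $h=\pi_{i}$ gives that $\pi_{i}\circ f$ is $\ic\mbox{-}$functional continuous for every $i\in\Lambda$.

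For the ``if'' direction, assume each $\pi_{i}\circ f$ is $\ic\mbox{-}$functional continuous. Since $\ic$ is maximal, Theorem \ref{TA8} yields that each $\pi_{i}\circ f$ preserves $\ic\mbox{-}$functional convergence. Now take any function $g:S\longrightarrow Y$ that is $\ic\mbox{-}$convergent to some $y\in Y$. Then for each $i\in\Lambda$ the composite $\pi_{i}\circ(f\circ g)=(\pi_{i}\circ f)\circ g$ is $\ic\mbox{-}$convergent to $(\pi_{i}\circ f)(y)=\pi_{i}(f(y))$, whence by Lemma \ref{Lm6} the function $f\circ g$ is $\ic\mbox{-}$convergent to $f(y)=(\,\pi_{i}(f(y))\,)_{i\in\Lambda}$. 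Thus $f$ preserves $\ic\mbox{-}$functional convergence, and by Proposition \ref{Pr7}$(ii)$ (equivalently, again by Theorem \ref{TA8}) $f$ is $\ic\mbox{-}$functional continuous.

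The only point that really needs care is that, unlike the case of ordinary open preimages, there is no convenient description of the $\ic\mbox{-}$functional open subsets of $\prod_{i\in\Lambda}X_{i}$ in terms of the factors $X_{i}$; this is exactly why the argument must be routed through the ``preserves $\ic\mbox{-}$functional convergence'' reformulation, and it is precisely at that step that maximality is used --- through Theorem \ref{TA8}, to convert $\ic\mbox{-}$functional continuity of each $\pi_{i}\circ f$ into a statement about functions $S\longrightarrow X_{i}$. Without a hypothesis of this kind one has only the one-sided implication of Proposition \ref{Pr7}$(ii)$, so the equivalence cannot be expected to persist if maximality is dropped.
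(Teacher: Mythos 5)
Your proof is correct and follows essentially the same route as the paper's: the forward direction via Lemma \ref{Lm50} and the identity $(\pi_{i}\circ f)^{-1}(V)=f^{-1}(\pi_{i}^{-1}(V))$, and the converse via Theorem \ref{TA8} and Lemma \ref{Lm6} through the ``preserves $\ic\mbox{-}$functional convergence'' reformulation. Your observation that maximality is only needed for the direction going from the $\pi_{i}\circ f$ to $f$ is also consistent with how the paper deploys its hypotheses.
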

\begin{proof}
Let $\pi_{i}\circ f$ be $\ic\mbox{-}$functional continuous for each $i\in \Lambda.$ Let $g:S\longrightarrow Y$ be $\ic\mbox{-}$convergent to $y.$ Then $\pi_{i}\circ f\circ g:S\longrightarrow X_i$ is $\ic\mbox{-}$convergent to $(\pi_i\circ f)(y)$ for each $i\in \Lambda$ (by Theorem \ref{TA8}). Using Lemma \ref{Lm6}, it follows that $f\circ g$ is $\ic\mbox{-}$convergent to $f(y).$ Therefore $f$ is $\ic\mbox{-}$functional continuous (by Theorem \ref{TA8}).

Conversely let $f$ be $\ic\mbox{-}$functional continuous. Let $U_{\alpha}\subset X_{i}$ be $\ic\mbox{-}$functional open in $X_{i}$ for some $i\in \Lambda.$ Now $(\pi_{i}\circ f)^{-1}(U_{\alpha})=f^{-1}(\pi_i^{-1}(U_{\alpha}))$ where $\pi_i^{-1}(U_{\alpha})$ is $\ic\mbox{-}$functional open in $X$ (by Lemma \ref{Lm50}). Consequently $f^{-1}(\pi_i^{-1}(U_{\alpha}))$ is $\ic\mbox{-}$functional open in $Y$ as $f$ is $\ic\mbox{-}$functional continuous
\end{proof}

\section{\textbf{$\ic\mbox{-}$functional quotient and $\ic\mbox{-}$functional covering mappings}}
\vskip .3cm

 In the literature (see the papers \cite{boone,Lin1, Lin2, Lin3}), the notions of quotient, sequentially quotient and sequence covering mappings play an important role in studying sequential spaces. These notions have been extended using ideal convergence of sequences to
$\ic\mbox{-}$quotient and $\ic\mbox{-}$covering mappings in \cite{Zhou2}. In this section we intend to further extend these concepts by defining them in terms of functions over an arbitrary set $S.$\\

Let $X,Y$ be two spaces. Recall that an onto mapping $f:X\longrightarrow Y$ is said to be a \textit{quotient} mapping provided $U$ is open in $Y$ if and only if $f^{-1}(U)$ is open in $X;~f$ is said to be \textit{sequentially quotient} \cite{boone} provided $U$ is sequentially open in $Y$ if and only if $f^{-1}(U)$ is sequentially open in $X; ~f$ is said to be \textit{$\ic\mbox{-}$quotient}\cite{Zhou2} provided $U$ is $\ic\mbox{-}$open in $Y$ if $f^{-1}(U)$ is $\ic\mbox{-}$open in $X;~f$ is said to be \textit{sequence covering} \cite{boone} if whenever $(y_n:n\in \bn)$ is a sequence in $Y$ converging to $y\in Y,$ there exists a sequence $(x_n:n\in \bn)$ in $X$ satisfying  $x_{n}\in f^{-1}(y_n)$ for all $n\in \bn$ and $x\in f^{-1}(y)$ such that $x_n$ converges to $x;~f$ is said to be \textit{$\ic\mbox{-}$covering}\cite{Zhou2} if whenever $(y_n:n\in \bn)$ is a sequence in $Y,~\ic\mbox{-}$converging to $y\in Y,$ there exists a sequence $(x_n:n\in \bn)$ in $X$ satisfying $x_{n}\in f^{-1}(y_n)$ for all $n\in \bn$ and $x\in f^{-1}(y)$ such that $(x_n:n\in \bn)$ is $\ic\mbox{-}$convergent to $x.$

Our next definitions are introduced following  this line.
\begin{Def}
\label{D200}
Let $f$ be a mapping from a space $X$ onto another space $Y.$\\
\noindent $(i)$ $f$ is said to be \textit{$\ic\mbox{-}$functional quotient} provided $U$ is $\ic\mbox{-}$functional open in $Y$ if and only if $f^{-1}(U)$ is $\ic\mbox{-}$functional open in $X.$\\
\noindent $(ii)$ $f$ is said to be \textit{$\ic\mbox{-}$functional covering} if for any $g:S\longrightarrow Y$, $\ic\mbox{-}$converging to $y\in Y,$ there exists a function $h:S\longrightarrow X$ satisfying $(f\circ h)(s)=g(s)$ for all $s\in S$ and $x\in f^{-1}(y)$ such that $h$ is $\ic\mbox{-}$convergent to $x.$
\end{Def}
We call $f$ simply \textit{functional quotient} if we take functional open set instead of $\ic\mbox{-}$functional open set in Definition \ref{D200}.
\begin{Th}
\label{TO}
Let $X$ be a space and $Y$ be a non-empty set. Further let $f:X\longrightarrow Y$ be an onto mapping and $\ic$ be a maximal ideal of $S.$ There exists a strongest topology on $Y$ w.r.t which $f$ is $\ic\mbox{-}$functional continuous.
\end{Th}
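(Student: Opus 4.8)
The plan is to produce the strongest topology explicitly. Set
\[
\tau^{*}=\{\,U\subseteq Y : f^{-1}(U)\ \text{is $\ic$-functional open in}\ X\,\}.
\]
I would then verify three things: (a) $\tau^{*}$ is a topology on $Y$; (b) $f\colon X\longrightarrow(Y,\tau^{*})$ is $\ic\mbox{-}$functional continuous; (c) every topology $\tau$ on $Y$ for which $f$ is $\ic\mbox{-}$functional continuous satisfies $\tau\subseteq\tau^{*}$. Taken together, (a)--(c) say exactly that $\tau^{*}$ is the strongest topology on $Y$ making $f$ $\ic\mbox{-}$functional continuous.

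For (a): $f^{-1}(\emptyset)=\emptyset$ and $f^{-1}(Y)=X$ are open, hence $\ic\mbox{-}$functional open, so $\emptyset,Y\in\tau^{*}$. If $\{U_{\alpha}\}\subseteq\tau^{*}$, then $f^{-1}\bigl(\bigcup_{\alpha}U_{\alpha}\bigr)=\bigcup_{\alpha}f^{-1}(U_{\alpha})$, and an arbitrary union of $\ic\mbox{-}$functional open sets is $\ic\mbox{-}$functional open: dually, if $g\colon S\longrightarrow\bigcap_{\alpha}(X\setminus U_{\alpha})$ is $\ic\mbox{-}$convergent to $x$, then $g$ maps into each $\ic\mbox{-}$functional closed set $X\setminus U_{\alpha}$, so $x\in X\setminus U_{\alpha}$ for every $\alpha$; no maximality is needed here. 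For a binary (hence finite) intersection, $f^{-1}(U\cap V)=f^{-1}(U)\cap f^{-1}(V)$ is $\ic\mbox{-}$functional open in $X$ by Lemma \ref{Lm8}, where maximality of $\ic$ is used. Thus $\tau^{*}$ is a topology on $Y$.

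The genuine obstacle is (b), since $\ic\mbox{-}$functional continuity into $(Y,\tau^{*})$ requires controlling \emph{all} $\ic\mbox{-}$functional open subsets of $(Y,\tau^{*})$, not just the $\tau^{*}$-open ones. Here I would route through Theorem \ref{TA8}: because $\ic$ is maximal, it is enough to check that $f$ preserves $\ic\mbox{-}$functional convergence into $(Y,\tau^{*})$. So let $g\colon S\longrightarrow X$ be $\ic\mbox{-}$convergent to $x$ and let $V\in\tau^{*}$ with $f(x)\in V$. By the definition of $\tau^{*}$, the set $f^{-1}(V)$ is $\ic\mbox{-}$functional open in $X$ and contains $x$; by Proposition \ref{Pr6} (again using maximality) $g$ is $\ic\mbox{-}$eventually in $f^{-1}(V)$, say $g(s)\in f^{-1}(V)$ for all $s\in S\setminus E$ with $E\in\ic$. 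Then $(f\circ g)(s)\in V$ for all $s\in S\setminus E$, so $\{s\in S:(f\circ g)(s)\in V\}\in\fc(\ic)$; hence $f\circ g$ is $\ic\mbox{-}$convergent to $f(x)$ in $(Y,\tau^{*})$, and Theorem \ref{TA8} gives that $f$ is $\ic\mbox{-}$functional continuous with respect to $\tau^{*}$. Finally, for (c): if $\tau$ makes $f$ $\ic\mbox{-}$functional continuous and $V\in\tau$, then $V$ is $\ic\mbox{-}$functional open in $(Y,\tau)$, so $f^{-1}(V)$ is $\ic\mbox{-}$functional open in $X$, i.e.\ $V\in\tau^{*}$; thus $\tau\subseteq\tau^{*}$, and $\tau^{*}$ is the desired strongest topology.
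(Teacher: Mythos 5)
Your proposal is correct and follows essentially the same route as the paper: define $\tau^{*}$ as the family of subsets of $Y$ whose preimages are $\ic$-functional open, and show any competing topology is contained in it. The paper merely asserts that this family is a topology and that $f$ is $\ic$-functional continuous with respect to it, whereas you supply the verifications (via Lemma \ref{Lm8} for finite intersections and Theorem \ref{TA8} together with Proposition \ref{Pr6} for continuity), correctly identifying where maximality of $\ic$ enters.
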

\begin{proof}
Let $\mathcal{J}=\{V\subset Y:f^{-1}(V)$ is $\ic\mbox{-}$functional open in $X\}.$ Then $\mathcal{J}$ is a topology on $Y$ w.r.t which $f$ is $\ic\mbox{-}$functional continuous. Next let $\mathcal{J}^{\prime}$ be any other topology on $Y$ w.r.t which $f$ is $\ic\mbox{-}$functional continuous. Then for every $\jc^{\prime}\mbox{-}$functional open set $V\subset Y,$ $~f^{-1}(V)$ is $\ic\mbox{-}$functional open in $X.$ So for each $V\in \jc^{\prime},~V\in \jc$ and hence $\jc^{\prime} \subset \jc$ as required.
\end{proof}
In the above Theorem $f$ is an $\ic\mbox{-}$functional quotient mapping.
\begin{Def}
A mapping $f:X\longrightarrow Y$ is said to be \textit{$\ic\mbox{-}$functional open} provided $f(U)$ is $\ic\mbox{-}$functional open in $Y$ whenever $U$ is $\ic\mbox{-}$functional open in $X.$
\end{Def}
\begin{Prop}
Every $\ic\mbox{-}$functional continuous, $\ic\mbox{-}$functional open onto mapping is $\ic\mbox{-}$functional quotient.
\end{Prop}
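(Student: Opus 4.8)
The plan is to unwind the three hypotheses directly against the definition of an $\ic\mbox{-}$functional quotient mapping, which requires the equivalence ``$U$ is $\ic\mbox{-}$functional open in $Y$ $\iff$ $f^{-1}(U)$ is $\ic\mbox{-}$functional open in $X$'' for every $U\subset Y$. I would split this into the two implications and handle them separately.

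First I would dispose of the forward implication. If $U\subset Y$ is $\ic\mbox{-}$functional open, then by the very definition of $\ic\mbox{-}$functional continuity (Definition \ref{D100}$(ii)$) the preimage $f^{-1}(U)$ is $\ic\mbox{-}$functional open in $X$. So the hypothesis of $\ic\mbox{-}$functional continuity gives this direction for free, with nothing further to check.

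For the reverse implication, suppose $f^{-1}(U)$ is $\ic\mbox{-}$functional open in $X$. Since $f$ is $\ic\mbox{-}$functional open, its image $f\big(f^{-1}(U)\big)$ is $\ic\mbox{-}$functional open in $Y$. Here I would invoke surjectivity of $f$: because $f$ maps $X$ onto $Y$, we have $f\big(f^{-1}(U)\big)=U$ for every $U\subset Y$. Hence $U$ itself is $\ic\mbox{-}$functional open in $Y$, completing the equivalence and showing $f$ is $\ic\mbox{-}$functional quotient.

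Frankly, there is no real obstacle here; the only point that needs care is remembering to use that $f$ is \emph{onto} so that $f(f^{-1}(U))=U$ holds (for a non-surjective map one only gets $f(f^{-1}(U))=U\cap f(X)$, which would break the argument). Everything else is a one-line application of the given definitions, so the proof will be short.
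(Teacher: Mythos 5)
Your proof is correct; the paper states this Proposition without proof, and your argument is exactly the standard one the authors intend: $\ic$-functional continuity gives the forward implication of Definition \ref{D200}$(i)$, while $\ic$-functional openness together with surjectivity (so that $f(f^{-1}(U))=U$) gives the reverse. You also rightly flag that surjectivity is where the "onto" hypothesis is used, which is the only point of any substance here.
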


\begin{Prop}
Let $f:X\longrightarrow Y$ and $g:Y\longrightarrow Z$ be two mappings. Then the following results hold.\\
\noindent $(i)$ If $f$ and $g$ are $\ic\mbox{-}$functional quotient mappings then $g\circ f$ is an $\ic\mbox{-}$functional quotient mapping.\\
\noindent $(ii)$ If $f$ and $g\circ f$ are $\ic\mbox{-}$functional quotient mappings then $g$ is an $\ic\mbox{-}$functional quotient mapping.
\end{Prop}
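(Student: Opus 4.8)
The plan is to prove both parts by a direct diagram chase resting on the single set identity $(g\circ f)^{-1}(W)=f^{-1}\big(g^{-1}(W)\big)$ together with the defining biconditional of an $\ic\mbox{-}$functional quotient mapping in Definition \ref{D200}$(i)$; no arguments involving functions $S\to X$ are needed, since the notion of $\ic\mbox{-}$functional open set is already absorbed into that definition.

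For $(i)$ I would first note that $g\circ f$ is onto, being a composite of onto mappings, and then fix $W\subset Z$. If $W$ is $\ic\mbox{-}$functional open in $Z$, I would apply the quotient property of $g$ to get that $g^{-1}(W)$ is $\ic\mbox{-}$functional open in $Y$, and then the quotient property of $f$ to get that $f^{-1}\big(g^{-1}(W)\big)=(g\circ f)^{-1}(W)$ is $\ic\mbox{-}$functional open in $X$. Conversely, if $(g\circ f)^{-1}(W)=f^{-1}\big(g^{-1}(W)\big)$ is $\ic\mbox{-}$functional open in $X$, I would use the quotient property of $f$ in the direction ``preimage $\ic\mbox{-}$functional open $\Rightarrow$ set $\ic\mbox{-}$functional open'' to conclude $g^{-1}(W)$ is $\ic\mbox{-}$functional open in $Y$, and then the same direction of the quotient property of $g$ to conclude $W$ is $\ic\mbox{-}$functional open in $Z$. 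This is exactly the biconditional required for $g\circ f$.

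For $(ii)$, $g$ is again onto because $g\circ f$ is. Fixing $W\subset Z$: if $W$ is $\ic\mbox{-}$functional open in $Z$, then since $g\circ f$ is $\ic\mbox{-}$functional quotient, $(g\circ f)^{-1}(W)=f^{-1}\big(g^{-1}(W)\big)$ is $\ic\mbox{-}$functional open in $X$, whence the quotient property of $f$ forces $g^{-1}(W)$ to be $\ic\mbox{-}$functional open in $Y$; conversely, if $g^{-1}(W)$ is $\ic\mbox{-}$functional open in $Y$, then the quotient property of $f$ makes $f^{-1}\big(g^{-1}(W)\big)=(g\circ f)^{-1}(W)$ $\ic\mbox{-}$functional open in $X$, and the quotient property of $g\circ f$ then makes $W$ $\ic\mbox{-}$functional open in $Z$. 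I do not expect a real obstacle here: the proof is a formal transcription of the classical facts that quotient maps compose and that a left factor of a quotient through a quotient is again a quotient. The only care needed is bookkeeping — in each implication one must cite the correct one of the two directions of the biconditionals for $f$, $g$ and $g\circ f$, and not accidentally swap them.
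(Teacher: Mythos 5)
Your proposal is correct and follows essentially the same route as the paper: both proofs reduce to the set identity $(g\circ f)^{-1}(W)=f^{-1}\bigl(g^{-1}(W)\bigr)$ and a careful chase through the two directions of the biconditional in Definition \ref{D200}$(i)$, with ontoness handled exactly as you do. Your write-up is in fact slightly more explicit than the paper's about which direction of each biconditional is invoked at each step, but there is no substantive difference.
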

\begin{proof}
\noindent $(i)$ If $g, f$ are $\ic\mbox{-}$functional continuous then $g\circ f$ is also $\ic\mbox{-}$functional continuous. Again for any $V\subset Z,~(g\circ f)^{-1}(V)=(f^{-1}(g^{-1}(V)))$ and therefore $g\circ f$ is an $\ic\mbox{-}$functional quotient mapping.

$(ii)$ Let $V\subset Z$ such that $g^{-1}(V)$ is $\ic\mbox{-}$functional open in $Y.$ So $(f^{-1}(g^{-1}(V)))$ is $\ic\mbox{-}$functional open in $X$ as $f$ is an $\ic\mbox{-}$functional quotient mapping. Now $(g\circ f)^{-1}(V)=(f^{-1}(g^{-1}(V)))$ and $g\circ f$ being $\ic\mbox{-}$functional quotient, together imply that $V$ is $\ic\mbox{-}$functional open in $Z.$ Next let $V$ be $\ic\mbox{-}$functional open in $Z.$ Then as $g\circ f$ is $\ic\mbox{-}$functional continuous and $f$ is $\ic\mbox{-}$functional quotient, we have $g^{-1}(V)$ is $\ic\mbox{-}$functional open in $Y.$
\end{proof}

\begin{Prop}
$\ic\mbox{-}$functional quotient mappings are preserved by finite products provided $\ic$ is a maximal ideal of $S.$
\end{Prop}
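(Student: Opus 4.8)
The plan is to prove the statement for a product of two factors; the case of $n$ factors then follows by induction on $n$, writing $f_1\times\dots\times f_n=(f_1\times\dots\times f_{n-1})\times f_n$ and invoking the two-factor case together with the already established fact that a composition of $\ic\mbox{-}$functional quotient mappings is $\ic\mbox{-}$functional quotient. So let $f_1:X_1\to Y_1$ and $f_2:X_2\to Y_2$ be $\ic\mbox{-}$functional quotient mappings, let $\ic$ be maximal, and put $f=f_1\times f_2:X_1\times X_2\to Y_1\times Y_2$.

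First I would check that $f$ is $\ic\mbox{-}$functional continuous. As $f_1,f_2$ are $\ic\mbox{-}$functional continuous and $\ic$ is maximal, by Theorem \ref{TA8} they preserve $\ic\mbox{-}$functional convergence. Given $\varphi:S\to X_1\times X_2$ that is $\ic\mbox{-}$convergent to $(x_1,x_2)$, Lemma \ref{Lm6} yields $\pi_i\circ\varphi$ $\ic\mbox{-}$convergent to $x_i$, hence $\pi_i\circ f\circ\varphi=f_i\circ\pi_i\circ\varphi$ is $\ic\mbox{-}$convergent to $f_i(x_i)$, and a second use of Lemma \ref{Lm6} gives $f\circ\varphi$ $\ic\mbox{-}$convergent to $(f_1(x_1),f_2(x_2))=f(x_1,x_2)$. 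So $f$ preserves $\ic\mbox{-}$functional convergence, and Theorem \ref{TA8} makes it $\ic\mbox{-}$functional continuous.

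The substantive direction is the converse: if $f^{-1}(V)$ is $\ic\mbox{-}$functional open in $X_1\times X_2$ for some $V\subseteq Y_1\times Y_2$, then $V$ is $\ic\mbox{-}$functional open in $Y_1\times Y_2$. I would first show that each slice of $V$ is $\ic\mbox{-}$functional open: fixing $y_2\in Y_2$ and choosing $x_2\in f_2^{-1}(y_2)$ (possible since $f_2$ is onto), Lemma \ref{Lm9} makes $f^{-1}(V)\cap(X_1\times\{x_2\})$ $\ic\mbox{-}$functional open in $X_1\times\{x_2\}$, and under $X_1\times\{x_2\}\cong X_1$ this set is $f_1^{-1}(V_{y_2})$ with $V_{y_2}=\{y_1:(y_1,y_2)\in V\}$; as $f_1$ is $\ic\mbox{-}$functional quotient, $V_{y_2}$ is $\ic\mbox{-}$functional open in $Y_1$, and symmetrically each $V^{y_1}=\{y_2:(y_1,y_2)\in V\}$ is $\ic\mbox{-}$functional open in $Y_2$. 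Then, given $g=(g_1,g_2):S\to Y_1\times Y_2$ that is $\ic\mbox{-}$convergent to some $(y_1,y_2)\in V$, Lemma \ref{Lm6} gives $g_i$ $\ic\mbox{-}$convergent to $y_i$, and since $\ic$ is maximal Proposition \ref{Pr4} shows $g_i$ meets every $\ic\mbox{-}$functional open neighbourhood of $y_i$ in a set of indices belonging to $\fc(\ic)$. It remains to amalgamate these two one-variable facts with the $\ic\mbox{-}$functional openness of $f^{-1}(V)$, read through $f_1$ and $f_2$, and with the closure of $\ic\mbox{-}$functional open sets under finite intersection (Lemma \ref{Lm8}, where maximality enters again), to conclude $\{s:g(s)\in V\}\in\ic^{+}=\fc(\ic)$; by Proposition \ref{Pr4} this is exactly the assertion that $V$ is $\ic\mbox{-}$functional open, and the theorem follows.

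I expect this amalgamation to be the crux. The naive approach — lifting $g$, or a function into $(Y_1\times Y_2)\setminus V$ that is $\ic\mbox{-}$convergent into $V$, up to $X_1\times X_2$ — is unavailable, because $\ic\mbox{-}$functional quotient mappings carry no covering hypothesis and need not lift $\ic\mbox{-}$convergent functions (already $f\times\mathrm{id}_W$ need not be quotient for arbitrary $W$ in ordinary topology). The maximality of $\ic$ is precisely what substitutes for the missing lifting: Lemma \ref{Lm8} keeps finite intersections of the $\ic\mbox{-}$functional open slices of $V$ within the class of $\ic\mbox{-}$functional open sets, and the identity $\ic^{+}=\fc(\ic)$ lets Proposition \ref{Pr4} be applied in both directions, so that $\ic\mbox{-}$convergence on the $Y$-side may be tested against the finer topology generated by the $\ic\mbox{-}$functional open sets — the only topology relative to which the hypothesis on $f^{-1}(V)$ is informative. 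Should a direct argument stall here, the fallback is to pass to the coreflections $\ic\mbox{-}fX_i$ and $\ic\mbox{-}fY_i$ (genuine topologies because $\ic$ is maximal), observe that each $f_i$ then becomes an ordinary quotient map, and transport the product statement through that setting using the composition result already in hand.
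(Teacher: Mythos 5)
There is a genuine gap, and it is exactly at the step you yourself flag as the crux. The parts you carry out are sound: the $\ic\mbox{-}$functional continuity of $f=f_1\times f_2$ via Theorem \ref{TA8} and Lemma \ref{Lm6} (this is the paper's Proposition \ref{P200}), and the observation via Lemma \ref{Lm9} and the quotient hypothesis that every slice $V_{y_2}$, $V^{y_1}$ is $\ic\mbox{-}$functional open. But the ``amalgamation'' of slice information into $\ic\mbox{-}$functional openness of $V$ is the whole content of the proposition, and it does not follow from what you have assembled. For $g=(g_1,g_2)$ $\ic\mbox{-}$convergent to $(y_1,y_2)\in V$, Proposition \ref{Pr4} and maximality give $\{s:g_1(s)\in V_{y_2}\}\in\fc(\ic)$ and $\{s:g_2(s)\in V^{y_1}\}\in\fc(\ic)$, i.e.\ $(g_1(s),y_2)\in V$ and $(y_1,g_2(s))\in V$ for a filter set of indices; this says nothing about $(g_1(s),g_2(s))\in V$ unless $V$ happens to contain $V_{y_2}\times V^{y_1}$, which a general $V$ does not. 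Lemma \ref{Lm8} cannot bridge this, because $V$ is not an intersection of cylinders over its slices; and using the moving slice $V^{g_1(s)}$ instead would require intersecting $\fc(\ic)$-many filter sets along the diagonal in $s$, which is not available. Your fallback is also closed off: even granting that each $f_i:\ic\mbox{-}fX_i\longrightarrow \ic\mbox{-}fY_i$ is an ordinary quotient mapping, ordinary quotient mappings are not preserved by finite products (a fact you cite yourself), and $\ic\mbox{-}f(X_1\times X_2)$ need not be $\ic\mbox{-}fX_1\times\ic\mbox{-}fX_2$. So the essential implication is not proved.

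For comparison, the paper argues with projections rather than slices: assuming $U$ is not $\ic\mbox{-}$functional open, it takes $g$ $\ic\mbox{-}$convergent to $y\in U$ with $\{s:g(s)\in U\}\in\ic$, deduces from maximality that $\{s:(\pi_i\circ g)(s)\in\pi_i(U)\}\in\ic$ for some $i$, concludes that $\pi_i(U)$, and hence $f_i^{-1}(\pi_i(U))=\pi_i(f^{-1}(U))$, is not $\ic\mbox{-}$functional open, and contradicts Proposition \ref{Pr5} applied to $f^{-1}(U)$. Note, though, that this route leans on the identity $\{s:g(s)\in U\}=\bigcap_i\{s:(\pi_i\circ g)(s)\in\pi_i(U)\}$, which for a general $U$ is only an inclusion $\subset$; so the projection argument requires care at precisely the same spot where your slice argument stalls. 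In any case, your write-up as it stands does not establish the converse direction of the quotient property.
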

\begin{proof}
Let $f_i:X_{i}\longrightarrow Y_i$ be an $\ic\mbox{-}$functional quotient mapping for $i=1,2,\cdots, N.$ We define a mapping $f:\displaystyle{\prod^{N}_{i=1}}X_{i}\longrightarrow \displaystyle{\prod^{N}_{i=1}}Y_{i}$ by $$f(x_1,x_2,\cdots,x_{N})=(f_{1}(x_1),f_2(x_2),\cdots,f_N(x_N)).$$ By Proposition \ref{P200}, $f$ is $\ic\mbox{-}$functional continuous. It is also onto.

Next let $U\subset \displaystyle{\prod^{N}_{i=1}Y_{i}}$ be such that $f^{-1}(U)$ is $\ic\mbox{-}$functional open. Then there exists a function $g:S\longrightarrow \displaystyle{\prod^{N}_{i=1}}Y_{i},~\ic\mbox{-}$convergent to $y\in U$ and $\{s\in S:g(s)\in U\}\in \ic.$ Consequently $\pi_i\circ g:S\longrightarrow Y_i$ is $\ic\mbox{-}$convergent to $\pi_{i}(y)$ for $i=1,2,\cdots,N$ (by Lemma \ref{Lm6}). If $\{s\in S:(\pi_{i}\circ g)(s)\in \pi_{i}(U)\}\in \fc(\ic)$ for each $i=1,2,\cdots,N$ then $\{s\in S:g(s)\in U\}=\displaystyle{\bigcap^{N}_{i=1}\{s\in S: (\pi_{i}\circ g)(s)\in \pi_{i}(U)\}}\in \fc(\ic),$ which is a contradiction. So there exists $i$ such that $\{s\in S:(\pi_{i}\circ g)(s)\in \pi_{i}(U)\}\not\in \fc(\ic).$ Maximality of $\ic$ implies that $\{s\in S:(\pi_{i}\circ g)(s)\in \pi_{i}(U)\}\in \ic.$ Consequently $\pi_{i}(U)$ is not $\ic\mbox{-}$functional open in $Y_i.$ Thus $f_{i}^{-1}(\pi_{i}(U))$ is not $\ic\mbox{-}$functional open as $f_i$ is $\ic\mbox{-}$functional quotient. But $(\pi_{i}\circ f^{-1})(U)=f^{-1}_{i}(\pi_i(U)),$ which contradicts the fact that $f^{-1}(U)$ is $\ic\mbox{-}$functional open (by Proposition \ref{Pr5}). \\
\end{proof}
The interrelationships results among $\ic\mbox{-}$quotient, quotient and $\ic\mbox{-}$covering mappings that have been studied in \cite{Zhou2} can be further generalized as below.
\begin{Prop}
Let $f$ be a mapping from a space $X$ onto a space $Y.$ \\
\noindent $(i)$ If $f$ is $\ic\mbox{-}$functional continuous and an $\ic\mbox{-}$functional covering mapping then $f$ is an $\ic\mbox{-}$functional quotient mapping.\\
\noindent $(ii)$ If $f$ is one-to-one and an $\ic\mbox{-}$functional quotient mapping then $f$ is an $\ic\mbox{-}$functional covering provided $\ic$ is a  maximal ideal.

\end{Prop}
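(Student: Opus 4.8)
The plan is to treat the two parts separately, in each case reducing everything to the characterisations of $\ic\mbox{-}$functional open sets already available, namely Lemma \ref{Lm5} and Proposition \ref{Pr6}.

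For $(i)$, since $f$ is $\ic\mbox{-}$functional continuous, one half of the definition of an $\ic\mbox{-}$functional quotient mapping is free: if $U\subset Y$ is $\ic\mbox{-}$functional open, then $f^{-1}(U)$ is $\ic\mbox{-}$functional open in $X.$ For the converse I would argue by contradiction. Assume $f^{-1}(U)$ is $\ic\mbox{-}$functional open in $X$ but $U$ is not $\ic\mbox{-}$functional open in $Y.$ By Lemma \ref{Lm5} there is a function $h:S\longrightarrow Y\setminus U$ that is $\ic\mbox{-}$convergent to some $y\in U.$ Since $f$ is $\ic\mbox{-}$functional covering, lift $h$ to a function $k:S\longrightarrow X$ with $f\circ k=h$ and $k$ $\ic\mbox{-}$convergent to some $x\in f^{-1}(y).$ Then $k(s)\in f^{-1}(h(s))\subset f^{-1}(Y\setminus U)=X\setminus f^{-1}(U)$ for every $s\in S,$ while $x\in f^{-1}(y)\subset f^{-1}(U)$ because $y\in U;$ this contradicts Lemma \ref{Lm5} applied to the $\ic\mbox{-}$functional open set $f^{-1}(U).$ I expect no genuine obstacle in this part.

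For $(ii)$, since $f$ is one-to-one and onto it is a bijection, so $f^{-1}:Y\longrightarrow X$ is an honest function and $f^{-1}(y)$ is a singleton for each $y.$ Given $g:S\longrightarrow Y$ that is $\ic\mbox{-}$convergent to $y,$ put $h:=f^{-1}\circ g$ and $x:=f^{-1}(y);$ then $f\circ h=g$ and $x\in f^{-1}(y)$ automatically, so all that remains is to verify that $h$ is $\ic\mbox{-}$convergent to $x.$ Fix an open neighbourhood $V$ of $x.$ A short computation gives $h^{-1}(V)=g^{-1}(f(V)).$ Now $V$ is open, hence $\ic\mbox{-}$functional open in $X;$ since $f$ is a bijective $\ic\mbox{-}$functional quotient mapping and $f^{-1}(f(V))=V$ is $\ic\mbox{-}$functional open in $X,$ the set $f(V)$ is $\ic\mbox{-}$functional open in $Y.$ Finally $g$ is $\ic\mbox{-}$convergent to $y=f(x)\in f(V),$ so by Proposition \ref{Pr6} (this is exactly where maximality of $\ic$ enters) $g$ is $\ic\mbox{-}$eventually in $f(V),$ i.e. $g^{-1}(f(V))\in\fc(\ic);$ hence $h^{-1}(V)\in\fc(\ic).$ Thus $h$ is $\ic\mbox{-}$convergent to $x,$ which is what was needed.

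The step I expect to be the crux, in $(ii)$, is the passage from ``$V$ open in $X$'' to ``$f(V)$ is $\ic\mbox{-}$functional open in $Y$'': it rests on the observation that a bijective $\ic\mbox{-}$functional quotient mapping carries $\ic\mbox{-}$functional open sets in both directions, together with the elementary fact that every open set is $\ic\mbox{-}$functional open. Once $f(V)$ is known to be $\ic\mbox{-}$functional open, Proposition \ref{Pr6} is precisely the device (which requires $\ic$ maximal) that converts the $\ic\mbox{-}$convergence of $g$ into the co-$\ic$ condition on $h^{-1}(V)$ needed for the $\ic\mbox{-}$convergence of $h.$
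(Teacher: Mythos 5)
Your proposal is correct and follows essentially the same route as the paper: in $(i)$ you lift the offending function through the covering property and contradict the $\ic$-functional openness of $f^{-1}(U)$ (you phrase this via Lemma \ref{Lm5} where the paper uses Proposition \ref{Pr4}, but these are interchangeable characterizations), and in $(ii)$ you define $h=f^{-1}\circ g$ and use the bijectivity plus the quotient property to see that $f(V)$ is $\ic$-functional open, with maximality entering exactly where the paper invokes it. The only differences are cosmetic (direct argument versus contrapositive, and which equivalent characterization of $\ic$-functional openness is cited).
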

\begin{proof}

\noindent $(1)$ Let $f:X\longrightarrow Y$ be an $\ic\mbox{-}$functional continuous and $\ic\mbox{-}$functional covering mapping. Suppose $U\subset Y$ is such that $f^{-1}(U)$ is $\ic\mbox{-}$functional open. If $U$ is not $\ic\mbox{-}$functional open there exists a function $g:S\longrightarrow Y,~\ic\mbox{-}$converging to $y\in U$ for which $\{s\in S:g(s)\in U\}\in \ic.$ As $f$ is $\ic\mbox{-}$functional covering there is a function $h:S\longrightarrow X,~\ic\mbox{-}$convergent to $x\in X$ such that $f\circ h=g$ and $f(x)=y.$ Also $\{s\in S:g(s)\in U\}=\{s\in S:(f\circ h)(s)\in U\}=\{s\in S:h(s)\in f^{-1}(U)\}$ which implies that $f^{-1}(U)$ is not $\ic\mbox{-}$functional open in $X.$\\

\noindent $(ii)$ Let $f:X\longrightarrow Y$ be an one-to-one and $\ic\mbox{-}$functional quotient mapping. Let $g:S\longrightarrow Y$ be $\ic\mbox{-}$convergent to $y.$ As $f$ is one-to-one and onto, for each $s\in S$ there exists an unique $x_{s}\in X$ such that $f(x_s)=g(s).$ Define a function $h:S\longrightarrow X$ by $h(s)=x_{s}$ and let $f(x)=y.$ If $h$ is not $\ic\mbox{-}$convergent to $x,$ there exists an open set $O$
containing $x$ such that $\{s\in S:h(s)\in O\}\not\in \fc(\ic).$ Since $\ic$ is maximal, $\{s\in S:h(s)\in O\}\in \ic,$ so $\{s\in S:(f\circ h)(s)\in f(O)\}\in \ic$ (as $f$ is one-to-one) i.e. $\{s\in S:g(s)\in f(O)\}\in \ic$ (because $f\circ h=g$). Now $f(O)$ is $\ic\mbox{-}$functional open in $Y$ as $f^{-1}(f(O))=O$ is $\ic\mbox{-}$functional open in $X,$ and $f$ is $\ic\mbox{-}$functional quotient. This contradicts Proposition \ref{Pr4}.
\end{proof}

The next result establishes when an $\ic\mbox{-}$functional quotient mapping becomes a quotient mapping and conversely.
\begin{Th}
\label{T503}
Let $f$ be a continuous mapping from an $\ic\mbox{-}$functional space $X$ to another space $Y.$ Then $f$ is a quotient map if and only if $f$ is $\ic\mbox{-}$functional quotient and $Y$ is an $\ic\mbox{-}$functional space.
\end{Th}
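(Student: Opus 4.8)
The plan is to check both implications directly; the working tools are Proposition~\ref{Pr7} (a continuous mapping preserves $\ic$-functional convergence and is therefore $\ic$-functional continuous), the defining property of an $\ic$-functional space --- equivalently phrased as ``every $\ic$-functional open set is open'' or ``every $\ic$-functional closed set is closed'' --- the elementary fact that every open set is $\ic$-functional open, and the definition of a quotient mapping. Surjectivity of $f$ is automatic, since both quotient mappings and $\ic$-functional quotient mappings are onto by definition. No extra assumption on $\ic$ (maximality, $P$-ideal, pseudounion) is needed.

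For the ``only if'' direction, assume $f$ is a quotient mapping. First I would note that $f$ is $\ic$-functional continuous: by Proposition~\ref{Pr7}$(i)$ continuity yields preservation of $\ic$-functional convergence, and then Proposition~\ref{Pr7}$(ii)$ yields $\ic$-functional continuity, so $f^{-1}(U)$ is $\ic$-functional open in $X$ whenever $U$ is $\ic$-functional open in $Y$. For the converse half of the $\ic$-functional quotient condition, suppose $f^{-1}(U)$ is $\ic$-functional open in $X$; since $X$ is an $\ic$-functional space, $f^{-1}(U)$ is open in $X$, hence $U$ is open in $Y$ (as $f$ is quotient), hence $\ic$-functional open in $Y$. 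Thus $f$ is $\ic$-functional quotient. To see that $Y$ is an $\ic$-functional space, take an $\ic$-functional closed set $F\subset Y$; the key step is that $f^{-1}(F)$ is $\ic$-functional closed in $X$: if $g:S\longrightarrow f^{-1}(F)$ is $\ic$-convergent to $x\in X$, then $f\circ g:S\longrightarrow F$ is $\ic$-convergent to $f(x)$ by Proposition~\ref{Pr7}$(i)$, so $f(x)\in F$ and $x\in f^{-1}(F)$. Since $X$ is $\ic$-functional, $f^{-1}(F)$ is closed, and since $f$ is a quotient mapping, $F$ is closed in $Y$.

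For the ``if'' direction, assume $f$ is $\ic$-functional quotient and $Y$ is an $\ic$-functional space, and verify the quotient condition: $U$ open in $Y$ if and only if $f^{-1}(U)$ open in $X$. One implication is immediate from continuity of $f$. Conversely, if $f^{-1}(U)$ is open in $X$, then it is $\ic$-functional open in $X$, hence $U$ is $\ic$-functional open in $Y$ because $f$ is $\ic$-functional quotient, hence $U$ is open in $Y$ because $Y$ is an $\ic$-functional space; therefore $f$ is a quotient mapping.

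I do not anticipate a genuine obstacle: the argument is essentially a bookkeeping exercise in the definitions. The only points needing a little care are to invoke Proposition~\ref{Pr7} in the correct order (continuity $\Longrightarrow$ preservation of $\ic$-functional convergence $\Longrightarrow$ $\ic$-functional continuity) and to use freely that ``$Y$ is an $\ic$-functional space'' may be applied in its equivalent ``$\ic$-functional open $\Longrightarrow$ open'' form, together with the fact that $X$ being $\ic$-functional lets us pass from $\ic$-functional openness/closedness of preimages to genuine openness/closedness.
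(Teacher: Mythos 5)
Your proof is correct and follows essentially the same route as the paper: both rest on continuity preserving $\ic$-convergence (hence $\ic$-functional continuity), the fact that open sets are $\ic$-functional open, and the defining property of an $\ic$-functional space to pass between ``open'' and ``$\ic$-functional open'' on each side. The only cosmetic difference is that you argue directly where the paper argues by contrapositive, and you are slightly more explicit in verifying the ``$U$ $\ic$-functional open $\Rightarrow f^{-1}(U)$ $\ic$-functional open'' half of the $\ic$-functional quotient condition, which the paper leaves implicit.
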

\begin{proof}
First let $f:X\longrightarrow Y$ be a quotient mapping and let $F\subset Y$ be not closed. Then $f^{-1}(F)$ is not closed in $X$ as $f$ is a quotient mapping. So $f^{-1}(F)$ is not $\ic\mbox{-}$functional closed since $X$ is an $\ic\mbox{-}$functional space. Hence by Theorem 3.4 there exists a function $g:S\longrightarrow f^{-1}(F)$ which is $\ic\mbox{-}$convergent to $x\in X\setminus f^{-1}(F).$ Therefore, $f\circ g:S\longrightarrow F$ is $\ic\mbox{-}$convergent to $f(x)\in Y\setminus F$ and consequently $F$ is not $\ic\mbox{-}$functional closed. Thus $Y$ is an $\ic\mbox{-}$functional space. Let $F\subset Y$ be such that
$f^{-1}(F)$ is $\ic\mbox{-}$functional closed. Now if $F$ is not $\ic\mbox{-}$functional closed, $F$ is not closed. Thus $f^{-1}(F)$ is not closed (as $f$ is a quotient mapping). Since $X$ is an $\ic\mbox{-}$functional space, $f^{-1}(F)$ is not $\ic\mbox{-}$functional closed.

Conversely let $f$ be an $\ic\mbox{-}$functional quotient mapping and let $Y$ be an $\ic\mbox{-}$functional space. Take $  F\subset Y$ such that $f^{-1}(F)$ is closed in $X$ and so $\ic\mbox{-}$functional closed. Since $f$ is an $\ic\mbox{-}$functional quotient mapping $F$ is $\ic\mbox{-}$functional closed. $Y$ is an $\ic\mbox{-}$functional space, thus $F$ is closed. This concludes that $f$ is a quotient mapping.
\end{proof}

Generally an $\ic\mbox{-}$functional quotient mapping is not quotient and vice-versa.
The next result characterises $\ic\mbox{-}$functional spaces in terms of the interrelations of $\ic\mbox{-}$functional quotient and quotient mappings, which can be proved in the same way as that of \cite[Theorem 5.6]{Zhou2}.
\begin{Th}
Let $X$ be a space and $\ic$ be a maximal ideal of $S.$ Then $X$ is an $\ic\mbox{-}$functional space if and only if each $\ic\mbox{-}$functional quotient mapping onto $X$ is quotient.
\end{Th}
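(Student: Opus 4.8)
The plan is to prove both implications of the biconditional, mirroring the structure of \cite[Theorem 5.6]{Zhou2} but phrased in the functional setting. For the forward direction, assume $X$ is an $\ic\mbox{-}$functional space and let $f : Z \longrightarrow X$ be an $\ic\mbox{-}$functional quotient mapping onto $X$. I must show $f$ is quotient, i.e.\ that $U \subset X$ is open whenever $f^{-1}(U)$ is open. So suppose $f^{-1}(U)$ is open in $Z$. Every open set is $\ic\mbox{-}$functional open, so $f^{-1}(U)$ is $\ic\mbox{-}$functional open; since $f$ is $\ic\mbox{-}$functional quotient, $U$ is $\ic\mbox{-}$functional open in $X$. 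Because $X$ is an $\ic\mbox{-}$functional space, every $\ic\mbox{-}$functional open (equivalently, $\ic\mbox{-}$functional closed) set is open (closed), so $U$ is open, and $f$ is quotient. The maximality of $\ic$ is not really needed here — only the elementary fact that open $\Rightarrow$ $\ic\mbox{-}$functional open.

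For the converse, assume every $\ic\mbox{-}$functional quotient mapping onto $X$ is quotient, and show $X$ is $\ic\mbox{-}$functional. The natural device, paralleling the classical sequential-space argument, is to manufacture a canonical $\ic\mbox{-}$functional quotient mapping onto $X$ from $X$'s own $\ic\mbox{-}$functional coreflection. Concretely, let $\ic\mbox{-}fX$ denote the $\ic\mbox{-}$functional coreflection of $X$ (recall this is $X$ with the topology generated by $\ic\mbox{-}$functional open sets, which, since $\ic$ is maximal, is exactly the collection of all $\ic\mbox{-}$functional open sets — by Lemma \ref{Lm8} and the remark following Proposition \ref{Pr6}). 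Consider the identity mapping $\mathrm{id} : \ic\mbox{-}fX \longrightarrow X$. This is continuous (the coreflection topology is finer), onto, and I claim it is $\ic\mbox{-}$functional quotient: $U \subset X$ is $\ic\mbox{-}$functional open in $X$ if and only if $\mathrm{id}^{-1}(U) = U$ is $\ic\mbox{-}$functional open in $\ic\mbox{-}fX$. The key point to verify is that the $\ic\mbox{-}$functional open subsets of $\ic\mbox{-}fX$ coincide with those of $X$; this holds because an $\ic\mbox{-}$convergent function into $\ic\mbox{-}fX$ is $\ic\mbox{-}$convergent into $X$ (finer topology has fewer convergent functions, into the same points), and conversely — using maximality and Proposition \ref{Pr6} — a function $\ic\mbox{-}$converging to a point of an $\ic\mbox{-}$functional open set is $\ic\mbox{-}$eventually in it, hence $\ic\mbox{-}$converges in the coreflection too, since the coreflection is generated by (finitely intersected) $\ic\mbox{-}$functional open sets. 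Once $\mathrm{id} : \ic\mbox{-}fX \longrightarrow X$ is known to be $\ic\mbox{-}$functional quotient, the hypothesis forces it to be quotient; but a continuous bijection that is quotient is a homeomorphism, so $X = \ic\mbox{-}fX$, meaning every $\ic\mbox{-}$functional open set in $X$ is already open — that is precisely the statement that $X$ is an $\ic\mbox{-}$functional space.

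The main obstacle I anticipate is exactly the claim that $\ic\mbox{-}$functional openness is intrinsic under the coreflection passage: that $O$ is $\ic\mbox{-}$functional open in $X$ if and only if $O$ is $\ic\mbox{-}$functional open in $\ic\mbox{-}fX$. The ``$\Leftarrow$'' half of the equivalence of $\ic$-convergent functions (into $\ic\mbox{-}fX$ from into $X$ for points in an $\ic\mbox{-}$functional open target) is where the maximality of $\ic$ genuinely enters, via Proposition \ref{Pr6}: one takes a function $g$ that is $\ic\mbox{-}$convergent in $X$ to a point $x$ lying in an arbitrary basic open set $W$ of $\ic\mbox{-}fX$ (a finite intersection of $\ic\mbox{-}$functional open sets of $X$, hence itself $\ic\mbox{-}$functional open by Lemma \ref{Lm8}), and concludes via Proposition \ref{Pr6} that $g$ is $\ic\mbox{-}$eventually in $W$, whence $\{s : g(s) \in W\} \in \fc(\ic)$, so $g$ is $\ic\mbox{-}$convergent to $x$ in $\ic\mbox{-}fX$. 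This shows a set of $X$ that is not $\ic\mbox{-}$functional open cannot become $\ic\mbox{-}$functional open in the finer space and vice versa. The remaining steps — continuity of the identity, ``continuous bijective quotient is a homeomorphism'', and the final identification — are routine.
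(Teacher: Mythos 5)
The paper itself gives no proof of this theorem (it defers to \cite[Theorem 5.6]{Zhou2}), so your argument has to stand on its own. Your converse direction does: the device of feeding the identity mapping $\mathrm{id}:\ic\mbox{-}fX\longrightarrow X$ into the hypothesis is exactly the right move, and the one point that needs care — that for maximal $\ic$ the $\ic\mbox{-}$functional open subsets of $X$ and of $\ic\mbox{-}fX$ coincide, so that $\mathrm{id}$ is $\ic\mbox{-}$functional quotient — you argue correctly via Lemma \ref{Lm8} and Proposition \ref{Pr6} (in fact, since for maximal $\ic$ the $\ic\mbox{-}$functional open sets already form a topology, you do not even need to pass through subbasic finite intersections). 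The conclusion that a quotient continuous bijection identifies the two topologies, hence that every $\ic\mbox{-}$functional open set of $X$ is open, is correct. This is almost certainly the intended adaptation of the argument in \cite{Zhou2}.

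The forward direction, however, has a genuine gap as measured against the paper's own definitions. "Quotient" in Section 5 means ``$U$ is open in $Y$ \emph{if and only if} $f^{-1}(U)$ is open,'' so a quotient mapping is in particular continuous; but Definition \ref{D200}$(i)$ does \emph{not} build continuity into ``$\ic\mbox{-}$functional quotient.'' Your argument establishes only the implication ``$f^{-1}(U)$ open $\Rightarrow U$ open'' and never addresses why $f$ should be continuous. It cannot be derived: $U$ open in $X$ gives $f^{-1}(U)$ $\ic\mbox{-}$functional open in the domain, which is open only if the domain is itself an $\ic\mbox{-}$functional space. Indeed, taking any non-$\ic\mbox{-}$functional space $W$ (e.g.\ $[0,\omega_1]$ with the order topology, where $[0,\omega_1)$ is $\ic\mbox{-}$functional closed but not closed for any admissible ideal on a countable $S$), the identity $W\longrightarrow \ic\mbox{-}fW$ is, by your own coreflection lemma, an $\ic\mbox{-}$functional quotient mapping onto the $\ic\mbox{-}$functional space $\ic\mbox{-}fW$, yet it is not continuous and hence not quotient in the paper's sense. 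So either the theorem tacitly restricts to continuous mappings (as Theorem \ref{T503} does explicitly), or ``quotient'' here is meant as the single implication you prove; you should state which reading you adopt, since under the literal definitions the forward implication you assert is false without the continuity hypothesis.
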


\begin{Th}
Let $\ic$ be a maximal ideal. An onto mapping $p:X\longrightarrow Y$ is $\ic\mbox{-}$functional quotient if and only if it has the property that for any space $W$ and a mapping $f:Y\longrightarrow W,~\ic\mbox{-}$functional continuity of $f\circ p$ implies that of $f.$
\end{Th}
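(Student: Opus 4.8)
The plan is to prove the two implications separately: the forward one is a quick unravelling of the definitions, while the converse is where the maximality of $\ic$ enters, through Theorem \ref{TO}.

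For necessity I would argue directly. Suppose $p$ is $\ic\mbox{-}$functional quotient, let $W$ be a space, and let $f\colon Y\longrightarrow W$ be a mapping with $f\circ p$ $\ic\mbox{-}$functional continuous. Given any $\ic\mbox{-}$functional open $V\subseteq W$, the set $(f\circ p)^{-1}(V)=p^{-1}\big(f^{-1}(V)\big)$ is $\ic\mbox{-}$functional open in $X$; since $p$ is $\ic\mbox{-}$functional quotient, $f^{-1}(V)$ is $\ic\mbox{-}$functional open in $Y$, and as $V$ was arbitrary $f$ is $\ic\mbox{-}$functional continuous. This step uses only that $\ic\mbox{-}$functional quotientness of $p$ pulls the property ``$\ic\mbox{-}$functional open'' back from $X$ to $Y$, and needs no maximality.

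For sufficiency I would begin by setting $\jc=\{V\subseteq Y:\ p^{-1}(V)\ \text{is}\ \ic\mbox{-}\text{functional open in}\ X\}$. Since $\ic$ is maximal and $p$ is onto, Theorem \ref{TO} makes $\jc$ a topology on $Y$ — the strongest one turning $p$ into an $\ic\mbox{-}$functional continuous map — and certifies that $p\colon X\longrightarrow(Y,\jc)$ is $\ic\mbox{-}$functional continuous. I would then take $W$ to be the set $Y$ with the topology $\jc$, and let $f\colon Y\longrightarrow W$ be the identity map, the domain carrying the original topology of $Y$. Now $f\circ p$ is literally $p\colon X\longrightarrow(Y,\jc)$, which is $\ic\mbox{-}$functional continuous, so by the hypothesis $f$ must be $\ic\mbox{-}$functional continuous. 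Every $\jc$-open subset of $Y$ is $\ic\mbox{-}$functional open in $W$ (open sets are always $\ic\mbox{-}$functional open), so $\ic\mbox{-}$functional continuity of $f$ forces every member of $\jc$ to be $\ic\mbox{-}$functional open in $Y$; unwinding the definition of $\jc$, this is exactly the statement that $p^{-1}(V)$ being $\ic\mbox{-}$functional open in $X$ implies $V$ is $\ic\mbox{-}$functional open in $Y$, which is the $\ic\mbox{-}$functional quotient condition of Definition \ref{D200}(i). (If one also wants the complementary implication, that $p$ is itself $\ic\mbox{-}$functional continuous, it is immediate once $p$ is assumed continuous, by Proposition \ref{Pr7}.)

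The forward direction is purely formal; the heart of the argument, and the step I expect to be the real obstacle, is the choice of test space in the converse. The ``correct'' $W$ is $Y$ carrying the strongest topology making $p$ $\ic\mbox{-}$functional continuous, and producing this topology — together with the assertion that $p$ really is $\ic\mbox{-}$functional continuous into it — is precisely what Theorem \ref{TO} supplies, and that is the one place where maximality of $\ic$ is genuinely invoked. After that, feeding the identity $Y\to W$ into the assumed property collapses everything to a comparison of two topologies on the underlying set of $Y$, with the trivial ``open $\Rightarrow$ $\ic\mbox{-}$functional open'' fact doing the bookkeeping.
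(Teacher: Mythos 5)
Your proof is correct and takes essentially the same approach as the paper's: a formal unravelling of the definitions for necessity (the paper phrases it with $\ic$-functional closed sets instead of open ones), and, for sufficiency, an appeal to Theorem \ref{TO} to manufacture a test space to which the hypothesis is applied. The only divergence is cosmetic --- the paper tests against the two-point space $W=\{0,1\}$ equipped with the topology induced by the characteristic map of each candidate set $F$, whereas you test once and for all against $Y$ retopologized with $\jc$ via the identity map; like the paper, you verify only the ``$p^{-1}(V)$ $\ic$-functional open $\Rightarrow V$ $\ic$-functional open'' half of Definition \ref{D200}$(i)$, a point you at least flag explicitly.
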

\begin{proof}
First let $p:X\longrightarrow Y$ be $\ic\mbox{-}$functional quotient and let $f$ be a mapping from $Y$ to some space $W.$ Take $f\circ p,$ an $\ic\mbox{-}$functional continuous mapping.
Let $F\subset W$ be $\ic\mbox{-}$functional closed. Then $(f\circ p)^{-1}(F)=p^{-1}(f^{-1}(F))$ is $\ic\mbox{-}$functional closed in $X.$ Since $p:X\longrightarrow Y$ is $\ic\mbox{-}$functional quotient, $f^{-1}(F)$ is $\ic\mbox{-}$functional closed in $X.$

Conversely let the condition hold. Consider $F\subset Y$ such that $p^{-1}(F)$ is $\ic\mbox{-}$functional closed. Let $W=\{0,1\}.$ Define a mapping $f:Y\longrightarrow W$ by $f(y)=1$ if $y\in F$ and $f(y)=0$ if $y\in Y\setminus F.$ So $(f\circ p)(x)=1$ if $x\in p^{-1}(F)$ and $(f\circ p)(x)=0$ if $x\not\in p^{-1}(F).$ As in Theorem \ref{TO}, the topology on $W$ is induced by $f\circ p.$ As $(f\circ p)^{-1}(\{1\})=p^{-1}(F)$ is $\ic\mbox{-}$functional closed in $X,~\{1\}$ is $\ic\mbox{-}$functional closed in $W.$ Therefore by $\ic\mbox{-}$functional continuity of $f$, $F$ is $\ic\mbox{-}$functional closed in $Y.$ Consequently $p:X\longrightarrow Y$ is $\ic\mbox{-}$functional quotient.\\
\end{proof}

If $\ic$ has a pseudounion, then the next theorem readily follows from Proposition \ref{Pr2}.
\begin{Th}
\label{T090}
If $f$ is a mapping from a space $X$ onto $Y$ and if $\ic$ has a pseudounion then following results hold.\\
\noindent $(1)$ $f$ is $\ic\mbox{-}$functional continuous if and only if $f$ is functional continuous.\\
\noindent $(2)$ $f$ is $\ic\mbox{-}$functional quotient if and only if $f$ is functional quotient.
\end{Th}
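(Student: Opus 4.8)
The plan is to reduce both equivalences to Proposition \ref{Pr2}. The first thing I would record is the open-set counterpart of that proposition: since, in any space $Z$, a subset $O$ is $\ic\mbox{-}$functional open (resp.\ functional open) exactly when $Z\setminus O$ is $\ic\mbox{-}$functional closed (resp.\ functional closed), and since Proposition \ref{Pr2} says these two notions of closedness coincide once $\ic$ has a pseudounion, we obtain: whenever $\ic$ has a pseudounion, a subset of any space is $\ic\mbox{-}$functional open if and only if it is functional open. I would state this as a one-line observation (or invoke it inline) and then both parts are purely formal.

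For $(1)$, assume $f$ is $\ic\mbox{-}$functional continuous and let $U\subset Y$ be functional open. By the observation $U$ is $\ic\mbox{-}$functional open in $Y$, hence $f^{-1}(U)$ is $\ic\mbox{-}$functional open in $X$, hence functional open in $X$; thus $f$ is functional continuous. Since the hypothesis ``$\ic$ has a pseudounion'' is symmetric in the two notions of openness, exchanging the roles of ``$\ic\mbox{-}$functional open'' and ``functional open'' gives the converse verbatim.

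For $(2)$, $f$ being $\ic\mbox{-}$functional quotient means that for every $U\subset Y$, $U$ is $\ic\mbox{-}$functional open in $Y$ if and only if $f^{-1}(U)$ is $\ic\mbox{-}$functional open in $X$. Replacing each of the two occurrences of ``$\ic\mbox{-}$functional open'' by the equivalent ``functional open'' (in $Y$ and in $X$ respectively) turns this into: $U$ is functional open in $Y$ if and only if $f^{-1}(U)$ is functional open in $X$, which is precisely the definition of $f$ being functional quotient; because each replacement is an equivalence in both directions, the resulting biconditional is an honest ``if and only if''.

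There is no genuinely hard step here; the whole weight rests on Proposition \ref{Pr2}. The only place that needs a moment's care is the passage from the closed-set formulation of Proposition \ref{Pr2} to the open-set statement used above (via complements), together with the observation that the pseudounion hypothesis is imposed on the fixed ideal $\ic$ and not on $\ic_{fin}$, so that Proposition \ref{Pr2} is exactly the bridge collapsing the $\ic\mbox{-}$functional notions onto their ordinary-convergence (``functional'') analogues.
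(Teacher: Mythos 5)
Your argument is correct and is exactly what the paper intends: the theorem is stated with only the remark that it ``readily follows from Proposition \ref{Pr2}'', and your reduction via complements to the coincidence of $\ic$-functional closed and functional closed sets is precisely that route. No difference in approach and no gap.
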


\begin{Th}
Let $f:X\longrightarrow Y$ be a mapping and let $\ic$ have a pseudounion. Then $f$ is $\ic\mbox{-}$functional quotient
if and only if for any function $g:S\longrightarrow Y,~\ic\mbox{-}$converging to $p$ there exists a function $h:S\longrightarrow X$ which is $\ic\mbox{-}$convergent to some $x\in X$ so that $(f\circ h)(S)\subset g(S)$ and $f(x)=p.$
\end{Th}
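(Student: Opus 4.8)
The plan is to lean on the pseudounion hypothesis exactly as the earlier results do: by Proposition~\ref{Pr2} the $\ic$-functional open sets are precisely the functional open sets, by Theorem~\ref{T090} being $\ic$-functional quotient is the same as being functional quotient, and by Lemma~\ref{Lm3} every function $\ic$-convergent to a point admits an honestly convergent ``modification'' with the same limit and essentially the same range. Throughout I would use Lemma~\ref{Lm5} and Proposition~\ref{Pr4} as the two working descriptions of $\ic$-functional openness.

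For the sufficiency I would argue the contrapositive. Suppose $f$ is not $\ic$-functional quotient, so there is $U\subseteq Y$ with $f^{-1}(U)$ $\ic$-functional open in $X$ while $U$ is not $\ic$-functional open in $Y$. By Proposition~\ref{Pr4}(iii) there is a function $g:S\to Y$, $\ic$-convergent to some $p\in U$, for which $\{s\in S:g(s)\in U\}$ is finite. Since $U\neq Y$ (an equality would force $U$ open, hence $\ic$-functional open), pick a point $y_{0}\in Y\setminus U$ and replace the finitely many offending values of $g$ by $y_{0}$; admissibility of $\ic$ guarantees the new function, still called $g$, remains $\ic$-convergent to $p$, and now $g(S)\cap U=\emptyset$. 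Feeding this $g$ into the hypothesised lifting property yields $h:S\to X$, $\ic$-convergent to some $x$ with $f(x)=p$ and $(f\circ h)(S)\subseteq g(S)\subseteq Y\setminus U$; thus $h$ maps $S$ into $X\setminus f^{-1}(U)$ while its $\ic$-limit $x$ lies in $f^{-1}(U)$, and Lemma~\ref{Lm5} says $f^{-1}(U)$ is not $\ic$-functional open, a contradiction. The one point needing care is the normalisation $g(S)\cap U=\emptyset$; once it is in hand this direction is formal.

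For the necessity, let $f$ be $\ic$-functional quotient and let $g:S\to Y$ be $\ic$-convergent to $p$. If $p\in g(S)$, a constant function at any point of the (nonempty, by surjectivity) fibre $f^{-1}(p)$ is the desired lift, so assume $p\notin g(S)$ and suppose, for contradiction, that no $h:S\to f^{-1}(g(S))$ is $\ic$-convergent to a point of $f^{-1}(p)$. Starting from $f^{-1}(g(S))$, alternately form the functional closure (which by Proposition~\ref{Pr2} coincides with the $\ic$-functional closure) and the $f$-saturation, iterating until the construction stabilises at a set $F$ that is simultaneously $\ic$-functional closed and $f$-saturated and contains $f^{-1}(g(S))$. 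Since $f$ is $\ic$-functional quotient, $f(F)$ is then $\ic$-functional closed in $Y$; as $g(S)=f\big(f^{-1}(g(S))\big)\subseteq f(F)$ and $g$ is $\ic$-convergent to $p$, it follows that $p\in f(F)$, i.e.\ there is $x\in F$ with $f(x)=p$. It remains to manufacture from this a genuine function $h:S\to f^{-1}(g(S))$ that is $\ic$-convergent to $x$, thereby contradicting the ``no lift'' assumption; this is where the pseudounion is used, via Lemma~\ref{Lm3}, to pass from $\ic$-convergence to ordinary convergence and then to splice, along the pseudounion set and its complement, the functions witnessing the successive closure steps into one function valued in $f^{-1}(g(S))$.

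The main obstacle is precisely this last amalgamation step in the necessity proof: converting ``$x$ lies in the iterated functional closure of $f^{-1}(g(S))$'' into ``$x$ is the $\ic$-limit of a single function valued in $f^{-1}(g(S))$''. I expect this to rest entirely on the combinatorics of a pseudounion set $A$ — namely that every $I\in\ic$ meets $S\setminus A$ in a finite set — which should let one diagonalise the tower of witnessing functions down to a single $h$; once one checks that the tower in fact collapses to one step on the $f$-saturated hull, the contradiction is immediate. The sufficiency direction is, by comparison, routine.
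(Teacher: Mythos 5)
Your sufficiency half (the lifting property implies $\ic$-functional quotient) is correct, and arguably cleaner than the paper's: the paper routes through $\ic$-functional closed sets and a reparametrization of $g$ onto an infinite subset of $S$, whereas your finite modification of $g$ off $U$ (legitimate by admissibility of $\ic$) followed by Lemma \ref{Lm5} does the job directly and does not even invoke the pseudounion. Like the paper, you only verify the implication ``$f^{-1}(U)$ $\ic$-functional open $\Rightarrow$ $U$ $\ic$-functional open'' and not the continuity half of Definition \ref{D200}$(i)$, so I will not press that point.

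The necessity half has a genuine gap, and it sits exactly where you place your hopes. The saturated $\ic$-functional closed hull $F\supseteq f^{-1}(g(S))$ does yield a point $x\in F\cap f^{-1}(p)$, but membership in $F$ only records that $x$ is reachable by a (possibly transfinite) alternation of ``adjoin limits of functions into the previous stage'' and ``adjoin whole fibres''. In particular $x$ may enter $F$ purely through a saturation step, as a point of some fibre $f^{-1}(q)$ none of whose elements is the $\ic$-limit of anything; such an $x$ is not the $\ic$-limit of any function valued in $f^{-1}(g(S))$, so no contradiction with your ``no lift'' assumption is produced. The pseudounion cannot repair this: Lemma \ref{Lm3} converts one $\ic$-convergent function into one convergent function with essentially the same range, but it gives no mechanism for collapsing an iterated closure to a single step, and there is no reason the tower ``collapses to one step on the $f$-saturated hull''. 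The paper avoids the tower altogether: it replaces $g$ by the Lemma \ref{Lm3} modification $g'$ (convergent to $p$, with $g'(S)\setminus\{p\}\subseteq g(S)$), applies the quotient property \emph{once} to the single set $g'(S)\setminus\{p\}$ --- which is not functional closed, since a reparametrization of $g'$ puts $p$ in its functional closure --- to obtain $k:S\longrightarrow f^{-1}(g'(S)\setminus\{p\})$ converging to some $a$ outside that preimage, and then identifies $f(a)$ with $p$. If you want a correct argument along your lines, you must apply the quotient hypothesis to $g(S)$ (or to $g'(S)\setminus\{p\}$) itself rather than to a closed saturated hull; the remaining delicate point, in your write-up and in the paper's alike, is pinning the image of the limit down to $p$.
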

\begin{proof}
Let $f$ be a $\ic\mbox{-}$functional quotient mapping. Then $f$ is functional quotient (by Theorem \ref{T090}). Let $g:S\longrightarrow Y$ be $\ic\mbox{-}$convergent to $p.$ Since $\ic$ has pseudounion, as in Lemma \ref{Lm3}, we obtain a function $g^{\prime}:S\longrightarrow Y$ which is convergent to $p.$ Now $f^{-1}(g^{\prime}(S)\setminus \{p\})$ is not functional closed, and so we get a function $k:S\longrightarrow f^{-1}(g^{\prime}(S)\setminus \{p\}),$ converging to $a\not\in f^{-1}(g^{\prime}(S)\setminus \{p\}).$ Now $(f\circ k):S\longrightarrow g^{\prime}(S)$ is convergent to $f(a)=p.$

Conversely let the condition hold and let $F\subset Y$ so that $f^{-1}(F)$ is an $\ic\mbox{-}$functional closed set in $X.$ Let $g:S\longrightarrow F$ be $\ic\mbox{-}$convergent to $y\in Y.$ Then there exists an infinite set $S^{\prime}\subset S$ and $g|_{S^{\prime}}$ is convergent to $y.$ Let $\Phi=g|_{S^{\prime}}$ and $h:S\longrightarrow S^{\prime}$ be an onto mapping satisfying $(\Phi\circ h)(s)=g(s)$ for every $s\in S.$ Then $(\Phi \circ h)$ is convergent to $y.$ So, there is a function $k:S\longrightarrow f^{-1}((\Phi \circ h)(S))$ that is $\ic\mbox{-}$convergent to $x\in f^{-1}(y).$ As $f^{-1}(F)$ is $\ic\mbox{-}$functional closed, $x\in f^{-1}(F)$ so $y\in F.$
\end{proof}

\section{\textbf{$\ic\mbox{-}$functional Fr\'{e}chet-Uryshon space}}
\vskip .3cm

Recall that a space $X$ is \textit{Fr\'{e}chet-Uryshon}\cite{franklin1} (resp., \textit{statistically Fr\'{e}chet-Uryshon}\cite{mk}, \textit{$\ic\mbox{-}$Fr\'{e}chet-Uryshon} \cite{Zhou2}) if for each $A\subset X$ with $x\in cl(A)$ there exists a sequence in $A$ which is convergent (resp. statistically convergent, $\ic\mbox{-}$convergent) to $x.$
We can extend these notions to  \textit{$\ic\mbox{-}$functional Fr\'{e}chet-Uryshon space} in the following way.
\begin{Def}
A space $X$ is called an \textit{$\ic\mbox{-}$functional Fr\'{e}chet-Uryshon space} if for each $A\subset X$ and each $x\in cl(A)$ there exists a function $f:S\longrightarrow A, ~\ic\mbox{-}$convergent to $x.$
\end{Def}
\begin{Prop}
\label{Pr000}
Every $\ic\mbox{-}$functional Fr\'{e}chet-Uryshon space is an $\ic\mbox{-}$functional space.
\end{Prop}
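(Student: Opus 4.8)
The plan is to show the contrapositive-style implication directly from the definitions: to prove that an $\ic\mbox{-}$functional Fr\'{e}chet-Uryshon space $X$ is an $\ic\mbox{-}$functional space, I take an arbitrary $\ic\mbox{-}$functional closed subset $F\subset X$ and show it is closed, equivalently $cl(F)\subset F$. So I would fix $x\in cl(F)$ and aim to produce $x\in F$.

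The key step is to invoke the Fr\'{e}chet-Uryshon hypothesis with $A=F$: since $x\in cl(F)$, there exists a function $f:S\longrightarrow F$ that is $\ic\mbox{-}$convergent to $x$. Now I apply the defining property of $\ic\mbox{-}$functional closedness of $F$ (Definition \ref{D1}$(i)$): any function from $S$ into $F$ that is $\ic\mbox{-}$convergent to a point of $X$ has its limit inside $F$. Hence $x\in F$. This shows $cl(F)\subset F$, so $F$ is closed, and therefore $X$ is an $\ic\mbox{-}$functional space. The argument is essentially a one-line matching of the two definitions, so there is no serious obstacle; the only thing to be careful about is that the witness function $f$ has codomain exactly $F$ (not just a set with $F$ in its closure), which is precisely what the Fr\'{e}chet-Uryshon definition guarantees, so Definition \ref{D1}$(i)$ applies verbatim.

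If one prefers the ``open set'' formulation, the same proof can be phrased via Lemma \ref{Lm5} or Proposition \ref{Pr4}: let $U$ be $\ic\mbox{-}$functional open, put $F=X\setminus U$, and suppose $x\in cl(F)\cap U$; the Fr\'{e}chet-Uryshon property gives a function $f:S\longrightarrow F$ which is $\ic\mbox{-}$convergent to $x\in U$, contradicting Lemma \ref{Lm5}. Either way the proof is immediate, so I would simply write the short direct argument.
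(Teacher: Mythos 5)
Your argument is correct and is exactly the immediate definition-matching proof the paper evidently intends (the Proposition is stated without proof there): the Fr\'{e}chet-Uryshon property supplies a function $f:S\longrightarrow F$ that is $\ic$-convergent to $x\in cl(F)$, and $\ic$-functional closedness of $F$ forces $x\in F$. No gaps; the empty-set case is trivially closed, and your remark that the witness function must land in $F$ itself is the right point to check.
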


We modify \cite[Example 3.1]{tang} to show that the converse of the above Proposition does not hold for a maximal ideal.
\begin{Ex}
Let $\ic$ be a maximal ideal of $S$ and let $X$ be a non-empty set. For every $a\in S,$ let $g_{a}$ be a function from $S$ to $X$ and take $G_{a}=g_{a}(S).$ For fixed $a\in S$ assume that $g_{a}(x)\not=g_{a}(y)$ for $x\not=y\in S.$ Take $G=\{x_{a}:a\in S\}$ with $x_{a}\not=x_{b}$ for $a\not=b\in S.$ Consider a point $\infty$ outside $\bigcup\{G_{a}:a\in S\}\bigcup G$ and take $X=\bigcup\{G_{a}:a\in S\}\bigcup G\bigcup\{\infty\}.$

The topology on $X$ is defined in the following way:\\
\noindent $(1)$ Each point $g_{a}(s)$ is isolated;\\
\noindent $(2)$ For each $a\in S,$ an open neighbourhood of $x_{a}$ is taken as a set of the form $\{x_{a}\}\cup M_{a}$ where $M_{a}=\{g_{a}(s):s\in F\}$ for some $F\in \fc(\ic)$;\\
\noindent $(3)$ Each open neighbourhood of $\infty$ is a set of the form $\{\infty\}\cup M\cup \{M_{a}:a\in M\}$ where $M=\{x_{a}:a\in F\}$ for some $F\in \fc(\ic).$

First to show that $X$ is an $\ic\mbox{-}$functional space, take $Y\subset X$ which is $\ic\mbox{-}$functional closed. Also let $y\in cl(Y).$ Now one of the three cases may occur.\\

\textbf{Case 1:} If $y\in \bigcup\{G_{a}:a\in S\}$ then $\{y\}$ is an open neighbourhood of $y$ and $y\in cl(Y)$ implies that $y\in Y.$

\textbf{Case 2:} If $y\in G,$ then $y=x_{a}$ for some $a\in S.$ If possible let $y\not\in Y.$ Now if $\{s\in S:g_{a}(s)\in Y\}\in \ic$ then $U=(G_{a}\setminus Y)\cup \{x_{a}\}$ is an open neighbourhood of $x_{a}.$ Also $U\cap Y=\emptyset$ contradicts that $y\in cl(Y).$ We define a function $h:S\longrightarrow Y\cap G_{a}$ by $h(s)=g_{a}(s)$ if $g_{a}(s)\in Y$ and $h(s)=g_{a}(s_{0})$ for some fixed $s_{0}\in S.$ Then $h$ is $\ic\mbox{-}$convergent to $x_{a}.$ Because $Y$ is $\ic\mbox{-}$functional closed,
$x_{a}=y\in Y,$ which is a contradiction. Therefore $y\in Y.$

\textbf{Case 3:} Consider the case when $y=\infty.$ If $A=\{s\in S:x_{s}\in Y\}\in \ic$ put $V=X\setminus \bigcup\{G_{a}:a\in A\}.$ Then $V$ is an open neighbourhood of $\infty$ and $V\cap Y=\emptyset,$ which contradicts  $\infty \in cl(Y).$ Hence as in Case 2, we obtain a function $f:S\longrightarrow Y,~\ic\mbox{-}$convergent to $\infty.$ As $Y$ is $\ic\mbox{-}$functional closed, $\infty\in Y.$\\

To prove that $X$ is not an $\ic\mbox{-}$functional Fr\'{e}chet-Uryshon space, take $\infty\in cl(X\setminus (\{x_{a}:a\in S\}\cup \{\infty\})).$ Let $h:S\longrightarrow X\setminus (\{x_a:a\in S\}\cup \{\infty\})$ be $\ic\mbox{-}$convergent to $\infty.$ Define $A_{a}=h(S)\cap G_{a},~a\in S.$ If for each $a\in S,~\{s\in S:g_{a}(s)\in h(S)\}\in \ic$ then $V=\{\infty\}\cup \{G_{a}\setminus A_{a}:a\in S\}$ is an open set containing $\infty.$ But $V\cap h(S)=\emptyset$ and this contradicts that $h$ is $\ic\mbox{-}$convergent to $\infty.$ So there is $a\in S$ such that $\{s\in S:g_{a}(s)\in h(S)\}\in \fc(\ic).$ As in case 2, a function $k:S\longrightarrow h(S)\cap G_a$ can be constructed which is $\ic\mbox{-}$convergent to $x_{a}.$

Next if $\{s\in S:h(s)\in G_{a}\}\in \ic$ then $k:S\longrightarrow h(S)\cap G_{a}$ can be made to $\ic\mbox{-}$converge to a point different from $x_{a}.$ This contradicts Lemma \ref{Lm2}. Now if $\{s\in S:h(s)\in G_{a}\}\in \fc(\ic)$ then $k$ can be made to be $\ic\mbox{-}$convergent to $\infty\not=x_{a},$ which again is a contradiction as $X$ is a Hausdorff space.\\
\end{Ex}

Considering functions over $S$ instead of functions over $\bn,$ we restate \cite[Theorem 6.3]{Zhou2}
as follows.
\begin{Th}
\label{T000}
A space $X$ is an $\ic\mbox{-}$functional Fr\'{e}chet-Uryshon space if and only if each subset of $X$ is an $\ic\mbox{-}$functional space.
\end{Th}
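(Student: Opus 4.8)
The plan is to transcribe the classical equivalence ``Fr\'{e}chet-Uryshon $\Leftrightarrow$ hereditarily sequential'' into the functional setting, replacing sequences by functions $S\longrightarrow X$ and sequential closedness by $\ic\mbox{-}$functional closedness, while keeping careful track of the subspace topology.

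For the forward implication I would assume $X$ is an $\ic\mbox{-}$functional Fr\'{e}chet-Uryshon space and fix an arbitrary subspace $Y\subset X$. To show $Y$ is $\ic\mbox{-}$functional, take $F\subset Y$ that is $\ic\mbox{-}$functional closed in $Y$ and suppose, towards a contradiction, that $F$ is not closed in $Y$. Then there is a point $y\in cl_{Y}(F)\setminus F$, and since $cl_{Y}(F)=cl_{X}(F)\cap Y$ we get $y\in cl_{X}(F)$. Applying the $\ic\mbox{-}$functional Fr\'{e}chet-Uryshon property of $X$ to $F$ and $y$, there is a function $f:S\longrightarrow F$ that is $\ic\mbox{-}$convergent to $y$; as $f$ has values in $F\subset Y$ and $y\in Y$, this is equally $\ic\mbox{-}$convergence in the subspace $Y$ (basic neighbourhoods of $y$ in $Y$ being traces of those in $X$). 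Since $F$ is $\ic\mbox{-}$functional closed in $Y$, this forces $y\in F$, a contradiction. Hence every $\ic\mbox{-}$functional closed subset of $Y$ is closed, so $Y$ is an $\ic\mbox{-}$functional space.

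For the converse I would assume every subset of $X$ is an $\ic\mbox{-}$functional space and fix $A\subset X$ and $x\in cl(A)$. If $x\in A$, the constant function $s\mapsto x$ is $\ic\mbox{-}$convergent to $x$ (the preimage of any neighbourhood of $x$ is all of $S\in\fc(\ic)$), and we are done. If $x\notin A$, I would pass to the subspace $Y=A\cup\{x\}$, which is an $\ic\mbox{-}$functional space by hypothesis; in $Y$ the set $A$ is not closed because $x\in cl_{Y}(A)\setminus A$, hence $A$ is not $\ic\mbox{-}$functional closed in $Y$. Therefore there is a function $f:S\longrightarrow A$ that is $\ic\mbox{-}$convergent to a point of $Y\setminus A=\{x\}$, i.e. to $x$. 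In both cases we have produced a function from $S$ into $A$ that is $\ic\mbox{-}$convergent to $x$, so $X$ is an $\ic\mbox{-}$functional Fr\'{e}chet-Uryshon space.

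I do not expect a genuine obstacle: the argument is essentially bookkeeping with the definitions. The points to be careful about are that $\ic\mbox{-}$convergence of a function with range in a subspace $Y$ to a point of $Y$ does not depend on whether it is computed in $Y$ or in $X$, that ``$\ic\mbox{-}$functional closed in $Y$'' means Definition \ref{D1}$(i)$ read with the ambient space $Y$, and the harmless case split $x\in A$ versus $x\notin A$ in the converse (needed because $A$ need not be a non-closed proper subset of $A\cup\{x\}$ when $x\in A$).
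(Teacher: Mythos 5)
Your proof is correct and complete; the paper itself gives no proof of Theorem \ref{T000}, merely noting that it is the restatement of \cite[Theorem 6.3]{Zhou2} with sequences replaced by functions, and your argument is exactly the standard hereditary-closure argument that this citation implicitly invokes. The two points you flag (independence of $\ic$-convergence from whether it is computed in $Y$ or in $X$ when the range lies in $Y$, and the case split $x\in A$ versus $x\notin A$) are precisely the details that need checking, and you handle both correctly.
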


\begin{Prop}
\label{Pr0000}
Subspaces of an $\ic\mbox{-}$functional Fr\'{e}chet-Uryshon space are $\ic\mbox{-}$functional Fr\'{e}chet-Uryshon space.
\end{Prop}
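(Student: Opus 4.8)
The plan is to prove the statement directly from the definition, by pulling a witnessing function back from the whole space to the subspace; I will also indicate the even shorter route through Theorem \ref{T000}.

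\textbf{Direct approach.} Let $Z$ be a subspace of an $\ic\mbox{-}$functional Fr\'{e}chet-Uryshon space $X$, let $A\subset Z$, and let $x\in cl_{Z}(A)$, the closure taken in $Z$. First I would use the standard identity $cl_{Z}(A)=cl_{X}(A)\cap Z$ to conclude $x\in cl_{X}(A)$. Since $X$ is $\ic\mbox{-}$functional Fr\'{e}chet-Uryshon, there is a function $f:S\longrightarrow A$ which is $\ic\mbox{-}$convergent to $x$ in $X$. Because $A\subset Z$, the range of $f$ lies in $Z$, so it makes sense to view $f$ as a function into $Z$; the remaining step is to check that it is still $\ic\mbox{-}$convergent to $x$ with respect to the subspace topology. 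If $U$ is open in $Z$ with $x\in U$, then $U=V\cap Z$ for some $V$ open in $X$ with $x\in V$, and since $f(S)\subset Z$ we get $f^{-1}(U)=f^{-1}(V)\cap f^{-1}(Z)=f^{-1}(V)\in\fc(\ic)$. Hence $f:S\longrightarrow A$ is $\ic\mbox{-}$convergent to $x$ in $Z$, and $Z$ is an $\ic\mbox{-}$functional Fr\'{e}chet-Uryshon space.

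\textbf{Shorter approach.} Alternatively, one can read the result off Theorem \ref{T000}. By transitivity of the subspace topology, every subset $B$ of $Z$ carries the same topology whether regarded as a subspace of $Z$ or as a subspace of $X$. Since $X$ is $\ic\mbox{-}$functional Fr\'{e}chet-Uryshon, Theorem \ref{T000} applied to $X$ shows that each such $B$ is an $\ic\mbox{-}$functional space; applying Theorem \ref{T000} again, this time to $Z$, yields that $Z$ is an $\ic\mbox{-}$functional Fr\'{e}chet-Uryshon space.

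As for difficulty, there is no genuine obstacle here: the only two facts requiring (routine) verification are the behaviour of the closure operator under passage to a subspace and the compatibility of $\ic\mbox{-}$convergence of functions with the subspace topology, both of which are immediate from the definitions.
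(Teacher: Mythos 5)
Your proposal is correct; note that the paper actually states Proposition \ref{Pr0000} without any proof, so there is nothing to diverge from. Your direct argument is sound: the identity $cl_{Z}(A)=cl_{X}(A)\cap Z$ and the observation that a function into $A\subset Z$ which is $\ic$-convergent to $x$ in $X$ remains $\ic$-convergent to $x$ in the subspace topology (since $f^{-1}(V\cap Z)=f^{-1}(V)\in\fc(\ic)$) are exactly the two routine checks needed. Your alternative route through Theorem \ref{T000} together with transitivity of the subspace topology is equally valid and is presumably the one the authors had in mind, given that the proposition is placed immediately after that theorem.
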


\begin{Th}
\label{T2020}
$\ic\mbox{-}$functional Fr\'{e}chet-Uryshon spaces are preserved by topological sums.
\end{Th}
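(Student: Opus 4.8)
\textbf{Proof proposal for Theorem \ref{T2020}.}

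The plan is to argue directly from the definition of an $\ic\mbox{-}$functional Fr\'{e}chet-Uryshon space, exploiting the fact that in a topological sum $X=\bigoplus_{\lambda\in \Lambda}X_{\lambda}$ each summand $X_{\lambda}$ is clopen and that the closure operator decomposes along the summands: for $A\subset X$ one has $cl_X(A)=\bigcup_{\lambda\in\Lambda}cl_{X_\lambda}(A\cap X_\lambda)$. So I would fix $A\subset X$ and a point $x\in cl_X(A)$, locate the unique index $\lambda$ with $x\in X_{\lambda}$, and observe that then $x\in cl_{X_\lambda}(A\cap X_\lambda)$ with $A\cap X_\lambda$ nonempty. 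Since $X_{\lambda}$ is itself an $\ic\mbox{-}$functional Fr\'{e}chet-Uryshon space (it is one of the summands), there is a function $f:S\longrightarrow A\cap X_\lambda$ that is $\ic\mbox{-}$convergent to $x$ \emph{inside} $X_\lambda$.

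The second step is to promote this to $\ic\mbox{-}$convergence in $X$. Here I would use that $X_{\lambda}$ is open in $X$: every open neighbourhood $U$ of $x$ in $X$ meets $X_{\lambda}$ in an open set $U\cap X_\lambda$ that is a neighbourhood of $x$ in $X_\lambda$, and conversely every open set of $X_\lambda$ is open in $X$; hence the families $\{f^{-1}(U): U$ open in $X$, $x\in U\}$ and $\{f^{-1}(V): V$ open in $X_\lambda$, $x\in V\}$ coincide (as $f$ takes values in $X_\lambda$), so membership of these preimages in $\fc(\ic)$ is unchanged. Therefore $f$, viewed as a function $S\longrightarrow A$, is $\ic\mbox{-}$convergent to $x$ in $X$, which is exactly what the definition of $\ic\mbox{-}$functional Fr\'{e}chet-Uryshon demands. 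One should remark at the outset that $S$ is a fixed infinite set shared by every summand, so no reindexing of the domain is needed; this is the small convenience the functional formulation buys us over the sequential one.

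The only genuinely delicate point is a degenerate case: if $A\cap X_\lambda$ happened to be empty we could not build a function into it. But $x\in cl_X(A)$ together with $X_\lambda$ open forces $X_\lambda\cap A\neq\emptyset$, so this cannot occur; I would spell this out in one line. I do not anticipate any real obstacle beyond this bookkeeping, since the statement is a soft ``local property'' fact, entirely parallel to the corresponding statements for $\ic\mbox{-}$functional spaces (Theorem \ref{TA3}) and for hereditariness (Proposition \ref{Pr0000}); indeed one could alternatively deduce it by combining Proposition \ref{Pr0000}, Theorem \ref{T000} and Theorem \ref{TA3}, but the direct argument above is shorter and more transparent.
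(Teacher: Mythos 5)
Your direct argument is correct, but it is not the route the paper takes. The paper deduces the theorem from three other results: it notes that for any $Y\subset X=\bigoplus_{a}X_{a}$ one has $Y=\bigoplus_{a}(Y\cap X_{a})$, that each $Y\cap X_{a}$ is an $\ic\mbox{-}$functional Fr\'{e}chet-Uryshon subspace of $X_{a}$ by Proposition \ref{Pr0000}, hence an $\ic\mbox{-}$functional space by Proposition \ref{Pr000}, hence $Y$ is an $\ic\mbox{-}$functional space by Theorem \ref{TA3}, and then concludes via the characterization in Theorem \ref{T000} (every subset is an $\ic\mbox{-}$functional space iff the space is $\ic\mbox{-}$functional Fr\'{e}chet-Uryshon). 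That is precisely the ``alternative'' you mention in your last sentence, so you have in effect anticipated the paper's proof while choosing to present the other one. Your direct proof is sound: the decomposition $cl_X(A)=\bigcup_{\lambda}cl_{X_\lambda}(A\cap X_\lambda)$ holds because each summand is clopen, the nondegeneracy of $A\cap X_\lambda$ follows from openness of $X_\lambda$, and the transfer of $\ic\mbox{-}$convergence from $X_\lambda$ to $X$ works exactly as you say since open sets of $X_\lambda$ are open in $X$ and $f^{-1}(U)\supset f^{-1}(U\cap X_\lambda)\in\fc(\ic)$. What your version buys is self-containedness: it uses only the definition and avoids leaning on Theorem \ref{T000}, which the paper merely restates from the literature without proof; what the paper's version buys is brevity and uniformity with its treatment of the sequential case, at the cost of routing a soft fact through a nontrivial characterization theorem. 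Your observation that the fixed domain $S$ spares you the reindexing needed for sequences is also apt.
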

\begin{proof}
Let $\{X_{a}:a\in \Lambda\}$ be a disjoint family of $\ic\mbox{-}$functional Fr\'{e}chet-Uryshon spaces and let $X=\displaystyle{\bigoplus_{a\in \Lambda}X_{a}}$ be its topological sum. From Proposition \ref{Pr000} and Theorem \ref{TA3}, $X$ is an $\ic\mbox{-}$functional space. For every $Y\subset X$ and for every $a\in \Lambda,~Y\cap X_{a}$ is an $\ic\mbox{-}$functional Fr\'{e}chet-Uryshon space in $X_{a},$ and  consequently is an $\ic\mbox{-}$functional space in $X_{a}.$ Hence the topological sum $\displaystyle{\bigoplus_{a\in \Lambda}Y\cap X_{a}}$ becomes an $\ic\mbox{-}$functional space. As $Y$ is an $\ic\mbox{-}$functional space, therefore by Theorem \ref{T000}, $X$ is an $\ic\mbox{-}$functional Fr\'{e}chet-Uryshon space.
\end{proof}
Although the line of proof of the next result is analogous with that of \cite[Lemma 6.9]{Zhou2}, it has its own significance.
\begin{Th}
\label{T701}
Every space is a continuous and $\ic\mbox{-}$functional covering image of an $\ic\mbox{-}$functional Fr\'{e}chet-Uryshon space provided $\ic$ is a maximal ideal of $S.$
\end{Th}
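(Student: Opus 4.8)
The plan is to construct, for an arbitrary space $X$, an $\ic\mbox{-}$functional Fréchet-Uryshon space $Z$ together with a continuous $\ic\mbox{-}$functional covering mapping $\Phi:Z\longrightarrow X$. The natural approach mimics the construction in Theorem \ref{TA5}: for each point $x\in cl(A)$ one wants a ``convergent test space'' mapping onto the relevant part of $X$, and one forms a topological sum of all such pieces. Concretely, since $\ic$ is maximal, for every function $g:S\longrightarrow X$ that is $\ic\mbox{-}$convergent to some point, we can attach a copy of the space $\sum_{S}(\ic)$ (as in Example \ref{Ex1}) and map its isolated points by $g$ and its point $\infty$ to the $\ic\mbox{-}$limit of $g$. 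Let $\cs$ be the set of all pairs $(g,x)$ where $g:S\longrightarrow X$ is $\ic\mbox{-}$convergent to $x$, and for each such pair let $Z_{(g,x)}$ be a homeomorphic copy of $\sum_{S}(\ic)$; put $Z=\bigoplus_{(g,x)\in\cs}Z_{(g,x)}$ and define $\Phi$ on the copy $Z_{(g,x)}$ by $\Phi(s)=g(s)$ for $s\in S$ and $\Phi(\infty)=x$. To make $\Phi$ onto we may also throw in, for each point $x\in X$, a singleton summand mapped to $x$.

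First I would check that $\Phi$ is continuous: on each summand $Z_{(g,x)}$, the isolated points cause no trouble, and for a neighbourhood $U$ of $x=\Phi(\infty)$ in $X$, since $g$ is $\ic\mbox{-}$convergent to $x$ we have $\{s\in S:g(s)\in U\}\in\fc(\ic)$, so $\Phi^{-1}(U)\cap Z_{(g,x)}$ contains $(S\setminus I)\cup\{\infty\}$ for some $I\in\ic$, which is a basic open neighbourhood of $\infty$ in $\sum_{S}(\ic)$. Hence $\Phi$ is continuous. Next I would verify that $\Phi$ is $\ic\mbox{-}$functional covering: given any $g_0:S\longrightarrow X$ that is $\ic\mbox{-}$convergent to $y$, the pair $(g_0,y)$ belongs to $\cs$, so inside the summand $Z_{(g_0,y)}\cong\sum_{S}(\ic)$ the inclusion function $h:S\longrightarrow Z_{(g_0,y)}$, $h(s)=s$, is by Example \ref{Ex1} $\ic\mbox{-}$convergent to $\infty$, and by construction $(\Phi\circ h)(s)=g_0(s)$ for all $s$ and $\Phi(\infty)=y$; this is exactly the lifting demanded by Definition \ref{D200}$(ii)$.

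The main obstacle — and the step that genuinely uses maximality of $\ic$ — is showing that $Z$ is an $\ic\mbox{-}$functional Fréchet-Uryshon space. Since $\ic$-functional Fréchet-Uryshon is preserved by topological sums (Theorem \ref{T2020}), it suffices to show that each summand, i.e.\ each copy of $\sum_{S}(\ic)$, is $\ic\mbox{-}$functional Fréchet-Uryshon. So let $A\subset\sum_{S}(\ic)$ and let $z\in cl(A)$. If $z\in S$ then $z$ is isolated, so $z\in A$ and the constant function works. If $z=\infty$, then every basic neighbourhood $(S\setminus I)\cup\{\infty\}$ with $I\in\ic$ meets $A$, which forces $A\cap S\notin\ic$ (otherwise $(S\setminus(A\cap S))\cup\{\infty\}$ would be a neighbourhood of $\infty$ missing $A\setminus\{\infty\}$); since $\ic$ is maximal, $A\cap S\in\fc(\ic)$. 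Then one defines $f:S\longrightarrow A$ by $f(s)=s$ if $s\in A\cap S$ and $f(s)=s_0$ for a fixed $s_0\in A\cap S$ otherwise (or, if $\infty\in A$, one can even send the complement to $\infty$); since $\{s\in S:f(s)=s\}=A\cap S\in\fc(\ic)$, for any neighbourhood $(S\setminus I)\cup\{\infty\}$ of $\infty$ we get $f^{-1}((S\setminus I)\cup\{\infty\})\supseteq (A\cap S)\setminus I\in\fc(\ic)$, so $f$ is $\ic\mbox{-}$convergent to $\infty$. This establishes that each summand, hence $Z$, is $\ic\mbox{-}$functional Fréchet-Uryshon, completing the proof. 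The only subtlety to be careful about is the set-theoretic size of $\cs$ (it is a legitimate set since each $g$ is a function $S\to X$) and the disjointness of the summands, both of which are routine.
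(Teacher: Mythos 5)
Your proof is correct, and it is essentially the construction the paper intends: the paper itself omits a proof (deferring to the analogue, Lemma 6.9 of \cite{Zhou2}), but the very same device --- a topological sum, over all $\ic$-convergent functions $g:S\longrightarrow X$, of copies of the convergent-function space of Example \ref{Ex1}/Example \ref{E5}, mapped down by $g(s)$ on the isolated points and by the $\ic$-limit at $\infty$ --- is exactly what the paper deploys later in the proof of its theorem characterising $\ic$-functional Fr\'{e}chet-Uryshon spaces via pseudo-open mappings, where maximality is likewise what makes each summand Fr\'{e}chet-Uryshon. Your only deviation is cosmetic: you index disjoint copies of $\sum_{S}(\ic)$ by the pairs $(g,x)$ rather than using the image spaces $g(S)\cup\{y_{g}\}$, which if anything is cleaner since it sidesteps non-injective $g$.
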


In general product of two $\ic\mbox{-}$functional Fr\'{e}chet-Uryshon spaces may not be an $\ic\mbox{-}$functional Fr\'{e}chet-Uryshon space. This follows from a modification of an Example from \cite{renukadevi}. Actually product of two $\ic\mbox{-}$functional Fr\'{e}chet-Uryshon spaces need not be an $\ic\mbox{-}$functional space either.\\

A mapping $f:X\longrightarrow Y$ is called \textit{pseudo-open} \cite{Av} if for each $y\in Y$ and an open subset $U$ in $X$ with $f^{-1}(y)\subset U,~f(U)$ is a neighbourhood of $y$ in $Y.$
\begin{Th}
Let $f$ be a pseudo-open mapping from an $\ic\mbox{-}$functional Fr\'{e}chet-Uryshon space $X$ onto a space $Y.$ If $f$ preserves $\ic\mbox{-}$functional convergence then $Y$ is an $\ic\mbox{-}$functional Fr\'{e}chet-Uryshon space.
\end{Th}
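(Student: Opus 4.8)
The plan is to verify that $Y$ satisfies the defining property of an $\ic\mbox{-}$functional Fr\'{e}chet-Uryshon space directly, using the Fr\'{e}chet-Uryshon property of $X$ together with the pseudo-open and $\ic\mbox{-}$functional convergence-preserving hypotheses. So let $B\subset Y$ and let $y\in cl_Y(B)$; the goal is to produce a function $h\colon S\longrightarrow B$ that is $\ic\mbox{-}$convergent to $y$. First I would note that since $f$ is pseudo-open and onto, $y$ lies in the closure of $B$ forces $f^{-1}(y)$ to meet the closure of $f^{-1}(B)$ in $X$; more precisely, if $f^{-1}(y)\cap cl_X(f^{-1}(B))=\emptyset$, then $U=X\setminus cl_X(f^{-1}(B))$ is an open set containing $f^{-1}(y)$, so by pseudo-openness $f(U)$ is a neighbourhood of $y$ in $Y$, and one checks $f(U)\cap B=\emptyset$, contradicting $y\in cl_Y(B)$. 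Hence there is a point $x\in f^{-1}(y)$ with $x\in cl_X(f^{-1}(B))$.

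Next, since $X$ is an $\ic\mbox{-}$functional Fr\'{e}chet-Uryshon space and $x\in cl_X(f^{-1}(B))$, there exists a function $g\colon S\longrightarrow f^{-1}(B)$ that is $\ic\mbox{-}$convergent to $x$. Now apply the hypothesis that $f$ preserves $\ic\mbox{-}$functional convergence: $f\circ g\colon S\longrightarrow Y$ is $\ic\mbox{-}$convergent to $f(x)=y$. Finally observe that $(f\circ g)(S)\subset f(f^{-1}(B))\subset B$, so setting $h=f\circ g$ we have a function from $S$ into $B$ that is $\ic\mbox{-}$convergent to $y$, which is exactly what was required. Since $B$ and $y\in cl_Y(B)$ were arbitrary, $Y$ is an $\ic\mbox{-}$functional Fr\'{e}chet-Uryshon space.

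I expect the main obstacle to be the first step: correctly extracting from pseudo-openness a single point $x$ of the fiber $f^{-1}(y)$ that sits in the closure of $f^{-1}(B)$. One must be careful that pseudo-openness is applied to an open set containing the \emph{whole} fiber, and that the contradiction is set up with the right set $U$; the argument by contradiction with $U=X\setminus cl_X(f^{-1}(B))$ handles this cleanly, but it is worth writing out that $f(U)\cap B=\emptyset$, which follows because any $b\in B$ has $f^{-1}(b)\subset f^{-1}(B)\subset cl_X(f^{-1}(B))$, so no preimage of $b$ lies in $U$. The remaining two steps are essentially immediate invocations of the Fr\'{e}chet-Uryshon hypothesis on $X$ and of the convergence-preservation hypothesis, together with the trivial set inclusion $(f\circ g)(S)\subset B$; no maximality of $\ic$ or pseudounion assumption is needed.
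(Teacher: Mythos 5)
Your proof is correct and follows essentially the same route as the paper: use pseudo-openness to find a point $x\in f^{-1}(y)\cap cl_X(f^{-1}(B))$, apply the $\ic$-functional Fr\'{e}chet-Uryshon property of $X$ to get $g:S\longrightarrow f^{-1}(B)$ that is $\ic$-convergent to $x$, and push forward by $f$. Your derivation of the contradiction (showing $f(U)\cap B=\emptyset$ directly) is a slightly cleaner rewording of the paper's chain of interior identities, but the argument is the same.
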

\begin{proof}
Let $f$ preserve $\ic\mbox{-}$functional convergence. Let $A\subset Y$ and choose $y\in cl(A).$
If $f^{-1}(\{y\})\cap (cl(f^{-1}(A)))=\emptyset$ then $f^{-1}(y)\in X\setminus (cl(f^{-1}(A))).$ Since $f$
is pseudo-open, $y\in int(f(X\setminus cl(f^{-1}(A)))=int(f(int(X\setminus f^{-1}(A))))\subset int(f(X\setminus f^{-1}(A)))=int(Y\setminus A)=Y\setminus cl(A).$ So, $y\in Y\setminus cl(A),$ a contradiction. Therefore, there exists $x\in f^{-1}(\{y\})\cap (cl(f^{-1}(A))).$ Since $X$ is an $\ic\mbox{-}$functional Fr\'{e}chet-Uryshon space, there is a function $g:S\longrightarrow f^{-1}(A),~\ic\mbox{-}$convergent to $x.$ Hence $f\circ g:S\longrightarrow A$ is $\ic\mbox{-}$convergent to $f(x)=y,$ which consequently implies that $Y$ is an $\ic\mbox{-}$functional Fr\'{e}chet-Uryshon space.
\end{proof}
\begin{Cor}
\label{C1001}
$\ic\mbox{-}$functional Fr\'{e}chet-Uryshon spaces are preserved by continuous pseudo-open mappings.
\end{Cor}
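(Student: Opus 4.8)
The plan is to obtain this corollary as an immediate consequence of the preceding Theorem together with Proposition \ref{Pr7}. First I would fix the reading of the statement: ``preserved by continuous pseudo-open mappings'' means that if $X$ is an $\ic\mbox{-}$functional Fr\'{e}chet-Uryshon space and $f:X\longrightarrow Y$ is a continuous pseudo-open surjection, then $Y$ is again an $\ic\mbox{-}$functional Fr\'{e}chet-Uryshon space. So I would take such an $f$ and aim to deduce the conclusion from the last Theorem, which asserts the same thing under the weaker-looking hypothesis that $f$ be pseudo-open, onto, and \emph{preserve $\ic\mbox{-}$functional convergence}.

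The only point that needs checking is therefore that continuity of $f$ upgrades to preservation of $\ic\mbox{-}$functional convergence. This is exactly Proposition \ref{Pr7}$(i)$: a continuous mapping $f:X\longrightarrow Y$ preserves $\ic\mbox{-}$functional convergence, i.e. whenever $g:S\longrightarrow X$ is $\ic\mbox{-}$convergent to some $x\in X$, the composition $f\circ g:S\longrightarrow Y$ is $\ic\mbox{-}$convergent to $f(x)$. Combining this with the standing assumptions that $f$ is pseudo-open, onto, and that $X$ is an $\ic\mbox{-}$functional Fr\'{e}chet-Uryshon space, all the hypotheses of the preceding Theorem are satisfied.

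Applying that Theorem verbatim then yields that $Y$ is an $\ic\mbox{-}$functional Fr\'{e}chet-Uryshon space, which is precisely the assertion of the corollary. There is no genuine obstacle here: the substantive work — the pseudo-open argument relating $cl(f^{-1}(A))$ to $cl(A)$ through interiors, and the transfer of an $\ic\mbox{-}$convergent function valued in $f^{-1}(A)$ to one valued in $A$ via the composition with $f$ — has already been carried out in the proof of that Theorem. The corollary merely records that continuity furnishes a convenient (and strictly stronger) sufficient condition in place of the ``preserves $\ic\mbox{-}$functional convergence'' hypothesis, paralleling the classical fact that Fr\'{e}chet-Uryshon spaces are preserved by continuous pseudo-open surjections.
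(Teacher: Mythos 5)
Your proposal is correct and is exactly the argument the paper intends: the corollary is stated without proof immediately after the theorem because it follows by combining that theorem with Proposition \ref{Pr7}$(i)$ (continuity implies preservation of $\ic\mbox{-}$functional convergence), which is precisely what you do.
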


\begin{Th}
A space $Y$ is an $\ic\mbox{-}$functional Fr\'{e}chet-Uryshon space if every mapping onto $Y$ that preserves $\ic\mbox{-}$functional convergence is pseudo-open provided $\ic$ is a maximal ideal of $S.$
\end{Th}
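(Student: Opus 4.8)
The plan is to prove the contrapositive of the implication; that is, I assume $Y$ is not an $\ic\mbox{-}$functional Fr\'echet-Uryshon space and produce a mapping onto $Y$ that preserves $\ic\mbox{-}$functional convergence but fails to be pseudo-open. Since $Y$ is not $\ic\mbox{-}$functional Fr\'echet-Uryshon, by definition there is a subset $A\subset Y$ and a point $p\in cl(A)$ such that no function $f:S\longrightarrow A$ is $\ic\mbox{-}$convergent to $p$. Without loss of generality one may assume $p\notin A$ (replace $A$ by $A\setminus\{p\}$; this does not affect $p\in cl(A)$ once we note $p\in cl(A\setminus\{p\})$, which holds here since otherwise $\{p\}$ would be relatively open in $A\cup\{p\}$ and the constant function would witness convergence). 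The idea is to build a domain space $X$ as a kind of ``fan'' or disjoint-sum construction over all $\ic$-convergent functions landing in $A$, together with a copy of $Y$ with its topology, and let the mapping $q:X\longrightarrow Y$ be the obvious projection/identification; one then tweaks the topology near the fiber over $p$ so that convergence is preserved but pseudo-openness fails precisely at $p$.

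More concretely, first I would take $X$ to be $Y$ itself as a set, but retopologize: keep all open sets of $Y$ that do not contain $p$, and around $p$ declare a neighbourhood base consisting of sets of the form $U\setminus A$ where $U$ is an open neighbourhood of $p$ in $Y$. (If this does not already yield a topology one passes to the generated topology.) Let $q:X\longrightarrow Y$ be the identity map on underlying sets; it is onto. Then $q^{-1}(p)=\{p\}\subset X\setminus A$, and $X\setminus A$ is open in $X$ by construction, but $q(X\setminus A)=Y\setminus A$ need not be a neighbourhood of $p$ in $Y$ — indeed it is not, since $p\in cl_Y(A)$ forces every $Y$-neighbourhood of $p$ to meet $A$. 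Hence $q$ is not pseudo-open. The remaining task is to show $q$ preserves $\ic\mbox{-}$functional convergence: if $g:S\longrightarrow X$ is $\ic\mbox{-}$convergent to some $x\in X$, I must show $q\circ g$ is $\ic\mbox{-}$convergent to $q(x)=x$ in $Y$. For $x\neq p$ this is immediate because $X$ and $Y$ agree on neighbourhoods away from $p$. The crux is $x=p$: given a $Y$-open neighbourhood $U$ of $p$, I need $\{s:g(s)\in U\}\in\fc(\ic)$, knowing $\{s:g(s)\in U\setminus A\}\in\fc(\ic)$; but $U\setminus A\subset U$, so $\{s:g(s)\in U\setminus A\}\subset\{s:g(s)\in U\}$ and the latter is a superset of a set in $\fc(\ic)$, hence in $\fc(\ic)$. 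Thus $q\circ g$ is $\ic\mbox{-}$convergent to $p$, and more generally $q$ preserves $\ic\mbox{-}$functional convergence.

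Here I should double-check one subtlety: the construction must genuinely be consistent with $p\notin A$ and with the \emph{non-existence} of an $\ic$-convergent function into $A$ with limit $p$ — but actually my argument above never used that non-existence hypothesis for the ``preserves convergence'' direction, only for ``not pseudo-open'' it used $p\in cl_Y(A)$. So either the non-existence hypothesis is not needed (meaning the theorem holds more simply), or my retopologization is too coarse and $X$ fails to be well-defined as stated, forcing the more elaborate fan construction that genuinely exploits the hypothesis. The main obstacle will be precisely this: verifying that the retopologized $X$ is a legitimate topological space and, if not, replacing it by a sum-over-functions construction $X=\bigoplus_{g}\, (\text{cone on }S)\ \sqcup\ (Y\setminus\{p\})$ glued appropriately, and in that setting re-running the ``preserves $\ic$-convergence'' verification, where the non-existence of an $\ic$-convergent $f:S\to A$ with limit $p$ is exactly what guarantees no function in $X$ sneaks a limit onto the problematic fiber. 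I would also invoke, where convenient, that a maximal ideal $\ic$ lets us pass between ``$\{s:g(s)\in U\}\in\fc(\ic)$'' and ``$\{s:g(s)\in X\setminus U\}\in\ic$'' freely, mirroring the use of maximality in Proposition~\ref{Pr4} and Lemma~\ref{Lm8}. Once $q$ is exhibited, the contrapositive is complete: if every convergence-preserving surjection onto $Y$ is pseudo-open, then no such $q$ can exist, so $Y$ must be $\ic\mbox{-}$functional Fr\'echet-Uryshon.
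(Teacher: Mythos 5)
Your strategy is genuinely different from the paper's: you argue the contrapositive and try to manufacture a single convergence-preserving surjection onto $Y$ that is not pseudo-open, whereas the paper argues directly. It builds the canonical domain $Z=\bigoplus_{g\in\cs}S^{\ic}_{g}$, one copy of the space of Example \ref{E5} for each $\ic\mbox{-}$convergent function $g:S\longrightarrow Y$, shows $Z$ is $\ic\mbox{-}$functional Fr\'{e}chet-Uryshon via Example \ref{E5}, Theorem \ref{TA2} and Theorems \ref{T000} and \ref{T2020} (this is precisely where the maximality of $\ic$ is consumed), maps $Z$ onto $Y$ by the identity on each summand, checks that this map preserves $\ic\mbox{-}$functional convergence, and then applies the hypothesis together with the immediately preceding theorem (a pseudo-open, convergence-preserving image of an $\ic\mbox{-}$functional Fr\'{e}chet-Uryshon space is again such a space). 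The contrapositive route is legitimate in principle, but your write-up does not complete it.

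The concrete gaps are these. First, the proof is left unfinished by your own admission: you do not settle whether the retopologization of $Y$ is a topology, and the fallback ``fan construction'' is only gestured at, not carried out; a proof cannot end on that disjunction. Second, the convergence-preservation step has a hole at the points of $A$: in your space $X$ the only subbasic open sets containing a point $x\in A$ are $Y$-open sets omitting $p$ (the sets $U\setminus A$ never contain $x$), so if some $x\in A$ cannot be separated from $p$ by an open set --- and the paper assumes no separation axioms anywhere --- the neighbourhood filter of $x$ in $X$ degenerates to $\{X\}$, every function becomes $\ic\mbox{-}$convergent to $x$ in $X$, and $q$ fails to preserve convergence. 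Third, your own alarm bell is the right one: your construction uses neither the maximality of $\ic$ nor the defining property of the pair $(A,p)$ (that no function into $A$ is $\ic\mbox{-}$convergent to $p$), only that $p\in cl(A)\setminus A$. If it were fully correct it would show that the theorem's hypothesis already fails for every non-discrete $T_1$ space, a much stronger and rather suspicious assertion that must be confronted head-on rather than left as a parenthetical worry. Until the topology is verified, the non-$T_1$ case is handled, and that trivialization is either embraced or refuted, the argument is not a proof.
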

\begin{proof}
Let $Y$ be a space and let $g:S\longrightarrow Y$ be $\ic\mbox{-}$convergent to $y_{g}.$ Consider
 $S_{g}=g(S)\cup \{y_{g}\}$ and $\cs$ is the family of all functions on $S$ which are $\ic\mbox{-}$convergent in $Y.$ A topology on $S_{g}$ is defined as in Example \ref{E5} and is denoted by $S^{\ic}_{g}.$
Clearly $g$ is $\ic\mbox{-}$convergent to $y_{g}$ in $S^{\ic}_{g}.$ In view of Example \ref{E5}, Theorem \ref{TA2} and Theorem \ref{T000}, one can conclude  that $S^{\ic}_{g}$ is an
$\ic\mbox{-}$functional Fr\'{e}chet-Uryshon space since every subset of $S_{g}$ is open or closed in $S^{\ic}_{g}.$
Let $Z=\displaystyle{\bigoplus_{g\in \cs}S^{\ic}_{g}}$ be the topological sum of $\{S^{\ic}_{g}\}.$
By Theorem \ref{T2020}, $Z$ is an $\ic\mbox{-}$functional Fr\'{e}chet-Uryshon space. We define a mapping $f:Z\longrightarrow Y$ such that $f|_{S_{g}}:S^{\ic}_{g}\longrightarrow (S_{g},\tau_{S_{g}})$ is the identity mapping. Then $f$ preserves $\ic\mbox{-}$functional convergence and therefore by Theorem 6.4 $Y$ is an $\ic\mbox{-}$functional Fr\'{e}chet-Uryshon space.
\end{proof}
But suitably modifying \cite[Theorem 6.7]{Zhou2}, we can obtain the next theorem.
\begin{Th}
\label{T700}
Let $Y$ be an $\ic\mbox{-}$functional Fr\'{e}chet-Uryshon space. Then every $\ic\mbox{-}$functional covering mapping from a space onto $Y$ is pseudo-open.
\end{Th}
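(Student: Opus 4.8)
The plan is to argue by contradiction, playing the defining property of $\ic\mbox{-}$functional covering mappings (which produces a lifted function that is still $\ic\mbox{-}$convergent) against the $\ic\mbox{-}$functional Fr\'{e}chet-Uryshon hypothesis on $Y$ (which produces a witnessing function landing in a prescribed set). The structure of the argument parallels that of \cite[Theorem 6.7]{Zhou2}, but is carried out with functions on $S$ rather than sequences.

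First I would suppose that $f:X\longrightarrow Y$ is an $\ic\mbox{-}$functional covering mapping onto $Y$ which is \emph{not} pseudo-open. Then there exist a point $y\in Y$ and an open set $U$ in $X$ with $f^{-1}(y)\subset U$ such that $f(U)$ is not a neighbourhood of $y$. Since $f$ is onto, $f^{-1}(y)\neq\emptyset$, so $y\in f(U)$; and since $f(U)$ fails to be a neighbourhood of $y$, we have $y\notin int(f(U))$, that is, $y\in cl(Y\setminus f(U))$. This reduction is exactly what lets us feed the set $A=Y\setminus f(U)$ into the Fr\'{e}chet-Uryshon hypothesis.

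Next, because $Y$ is an $\ic\mbox{-}$functional Fr\'{e}chet-Uryshon space, applying the definition to $A=Y\setminus f(U)$ and the point $y\in cl(A)$ yields a function $g:S\longrightarrow Y\setminus f(U)$ that is $\ic\mbox{-}$convergent to $y$. Since $f$ is $\ic\mbox{-}$functional covering, there is a function $h:S\longrightarrow X$ with $(f\circ h)(s)=g(s)$ for every $s\in S$ and a point $x\in f^{-1}(y)$ such that $h$ is $\ic\mbox{-}$convergent to $x$. As $x\in f^{-1}(y)\subset U$ and $U$ is open, $U$ is an open neighbourhood of $x$, so $\{s\in S:h(s)\in U\}\in\fc(\ic)$; in particular this set is non-empty, because $S\notin\ic$. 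Choosing any $s_{0}$ with $h(s_{0})\in U$ gives $g(s_{0})=(f\circ h)(s_{0})=f(h(s_{0}))\in f(U)$, which contradicts $g(s_{0})\in Y\setminus f(U)$. Hence $f$ must be pseudo-open.

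The argument is short, and the only step that requires genuine care is the first one: correctly translating ``$f(U)$ is not a neighbourhood of $y$'' into the membership $y\in cl(Y\setminus f(U))$, so that the Fr\'{e}chet-Uryshon property can be invoked with exactly the target set that will clash with the image of the lift. Everything after that is a direct chaining of the covering property with the elementary observation (already used repeatedly in the paper) that an $\ic\mbox{-}$convergent function meets every open neighbourhood of its limit on a set belonging to $\fc(\ic)$, hence on a non-empty set.
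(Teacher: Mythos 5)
Your argument is correct, and it is precisely the adaptation of \cite[Theorem 6.7]{Zhou2} that the paper invokes without writing out: translate ``$f(U)$ is not a neighbourhood of $y$'' into $y\in cl(Y\setminus f(U))$, apply the Fr\'{e}chet-Uryshon property to get $g:S\longrightarrow Y\setminus f(U)$, lift it via the covering property, and derive a contradiction from the lift meeting $U$ on a set in $\fc(\ic)$ (non-empty since $S\notin\ic$). Nothing is missing.
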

We end the section with the following interesting observation.
\begin{Th}
A space $X$ is an $\ic\mbox{-}$functional Fr\'{e}chet-Uryshon space if and only if every continuous $\ic\mbox{-}$functional covering mapping onto $X$ is pseudo-open provided $\ic$ is a maximal ideal of $S.$
\end{Th}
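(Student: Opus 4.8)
The plan is to establish the two implications separately, leveraging the earlier results that do most of the work. For the forward direction, assume $X$ is an $\ic\mbox{-}$functional Fr\'{e}chet-Uryshon space. By Theorem \ref{T700}, every $\ic\mbox{-}$functional covering mapping onto $X$ is pseudo-open, and in particular every \emph{continuous} $\ic\mbox{-}$functional covering mapping onto $X$ is pseudo-open. Note this direction does not even need maximality of $\ic$; it is just a weakening of Theorem \ref{T700}.

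For the converse, assume that every continuous $\ic\mbox{-}$functional covering mapping onto $X$ is pseudo-open. The strategy is to exhibit a \emph{specific} continuous $\ic\mbox{-}$functional covering mapping onto $X$ and then apply the hypothesis to conclude it is pseudo-open; from pseudo-openness of that particular map we then derive the Fr\'{e}chet-Uryshon property. The natural candidate is the map $\Phi : Z \longrightarrow X$ built in the proof of Theorem \ref{T701} (the continuous $\ic\mbox{-}$functional covering image construction), where $Z = \bigoplus_{g \in \cs} S^{\ic}_{g}$ with $\cs$ the family of functions $S \to X$ that are $\ic\mbox{-}$convergent in $X$, and each $S^{\ic}_{g}$ topologised as in Example \ref{E5}. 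I would first recall that this $\Phi$ is continuous and $\ic\mbox{-}$functional covering (this is precisely the content of Theorem \ref{T701}, valid since $\ic$ is maximal), hence by hypothesis $\Phi$ is pseudo-open. Then, given $A \subset X$ and $x \in cl(A)$, I would transfer the closure relation upstairs: using that $Z$ is an $\ic\mbox{-}$functional Fr\'{e}chet-Uryshon space (shown in the proof of Theorem \ref{T701}, or reconstructed from Example \ref{E5}, Theorem \ref{TA2}, Theorem \ref{T000} and Theorem \ref{T2020}) together with pseudo-openness of $\Phi$, argue as in Theorem 6.4: if $\Phi^{-1}(\{x\}) \cap cl(\Phi^{-1}(A)) = \emptyset$ then pseudo-openness forces $x \notin cl(A)$, a contradiction, so there is $z \in \Phi^{-1}(\{x\}) \cap cl(\Phi^{-1}(A))$; since $Z$ is $\ic\mbox{-}$functional Fr\'{e}chet-Uryshon, there is $h : S \to \Phi^{-1}(A)$ that is $\ic\mbox{-}$convergent to $z$, and then $\Phi \circ h : S \to A$ is $\ic\mbox{-}$convergent to $\Phi(z) = x$ by continuity of $\Phi$ (Proposition \ref{Pr7}). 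Hence $X$ is an $\ic\mbox{-}$functional Fr\'{e}chet-Uryshon space.

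The main obstacle I anticipate is the converse direction, specifically making sure the chosen map $\Phi$ genuinely satisfies \emph{both} ``continuous'' and ``$\ic\mbox{-}$functional covering'' so that the hypothesis actually applies to it — one must check the domain $Z$ is constructed exactly as in Theorem \ref{T701} and that its $\ic\mbox{-}$functional Fr\'{e}chet-Uryshon status is available, which is where maximality of $\ic$ enters (through Example \ref{E5} and Proposition \ref{Pr4}, since the argument that subsets of $S^{\ic}_{g}$ are open or closed relies on $\{s : g(s) \in U\} \in \fc(\ic)$ for $\ic\mbox{-}$functional open $U$). Once that scaffolding is in place, the pseudo-open-implies-Fr\'{e}chet argument is essentially the one already used in the proof of Theorem 6.4 and requires only routine topological bookkeeping with interiors and closures under $\Phi$.
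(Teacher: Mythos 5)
Your proposal is correct and follows essentially the same route as the paper, which simply cites Theorem \ref{T700} for the forward direction and Theorem \ref{T701} together with Corollary \ref{C1001} for the converse; your converse argument just unfolds Corollary \ref{C1001} (pseudo-open continuous images of $\ic$-functional Fr\'{e}chet-Uryshon spaces are $\ic$-functional Fr\'{e}chet-Uryshon) inline rather than citing it.
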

\begin{proof}
This follows from Theorem \ref{T700}, Theorem \ref{T701} and corollary \ref{C1001}.
\end{proof}



\begin{thebibliography}{40}
\bibitem{Av} A.V. Arhangel'ski\v{i}, Mappings and spaces, Russian Math. Surveys, 21 (1966), 115-162.
\bibitem{boone} J.R. Boone. and F. Siwice, Sequentially quotient mappings, Czechoslov. Math.J, 26 (1976), 174-182.

\bibitem{I-QN} L. Bukovsk\'{y}, P. Das and J. \v{S}upina, Ideal quasi-normal
 convergence and related notions, Colloq. Math., 146 (2), (2017), 265 - 281.

\bibitem{lin1} L.X. Cheng, G.C. Lin, Y.Y. Lan and H. Liu, Measure theory of statistical convergence., Sci.China Ser. A., 51 (2008), 2285-2303.
\bibitem{lin2} L.X. Cheng, G.C. Lin. and H.H. Shi, On real-valued measures of statistical type and their applications to statistical convergence., Math. Comput. Model., 50 (2009), 116-122.
\bibitem{das2} P. Das and S. Ghosal, On I-Cauchy nets and completeness, Topology Appl., 157 (2010), 1152 - 1156.
\bibitem{das3} P. Das and S. Ghosal, When $\ic\mbox{-}$Cauchy nets in complete uniform spaces are $\ic\mbox{-}$convergent, Topology Appl., 158 (2011), 1529-1533.
    \bibitem{das4} P. Das, M. Sleziak, V. Toma, On $\mathcal{I}^{\mathcal{K}}$- Cauchy functions, Topology Appl., 173 (2014), 9-27.
\bibitem{das5} P. Das, S. sengupta, S. Glab, M. Bienias, Certain aspects of ideal convergence in topological spaces, Topology Appl., accepted.
\bibitem{mk} G. Di Maio and Lj.D.R. Ko\v cinac, Statistical convergence in
topology, Topology Appl., 156 (2008), 28 - 45.

\bibitem{engelking} R. Engelking, General Topology, Heldermann
Verlag, Berlin, 1989.

\bibitem{fast} H. Fast, Sur la convergence statistique, Colloq. Math., 2 (3-4) (1951), 241- 244.

\bibitem{franklin1} S.P. Franklin, Spaces in which sequences suffice, Fund. Math., 57 (1965), 107-115.
\bibitem{franklin2} S.P. Franklin, Spaces in which sequences suffice II, Fund. Math., 61 (1967), 51-56.
\bibitem{fridy} J.A. Fridy, On statistical convergence, Analysis, 5 (4) (1985), 301 - 313.

 \bibitem{george} D.N. Georgiou, S.D. Iliadais and A.C. Megaritis and G.A. Prinos, Ideal-convergence classes, Topology Appl., 222 (2017), 217-226.
 \bibitem{givant} S. Givant and P. Halmos, Introduction to Boolean Algebra, Undergraduate Texts in Mathematics, Springer, New York, (2009).
\bibitem{gruenhage1} G. Gruenhage, Generalized metric spaces, In: K. Kunen and J.E. Vaughan (eds.), Handbook of Set-theoretic Topology, Amsterdam, North-Holland (1984), 423-501.

\bibitem{Jasinski} J. Jasinski and I.Reclaw, Ideal convergence of continuous functions, Topology Appl., 153,
(2006), 3511-3518.
\bibitem{kostyrko2} P. Kostyrko, T. \v{S}al\'{a}t and W. Wilczynski, $\ic\mbox{-}$convergence, Real Anal. Exch., 26 (2000/2001), 669-686.

\bibitem{Kuratowski} K. Kuratowski, Topologie I. PWN, Warszawa, 1962.

\bibitem{Lahiri1} B.K. Lahiri and P. Das, $\ic$ and
$\ic^*$-convergence in topological spaces, Math. Bohemica, 130 (2)
(2005), 153 - 160.
\bibitem{Lahiri2} B.K. Lahiri, Pratulananda Das, On $\mathcal{I}$ and $\mathcal{I}^*$
convergence of nets, Real Anal.
    Exchange, 33 (2) (2007-2008), 431--442.
\bibitem{li} K. Li, S. Lin and Y. Ge, On statistical convergence on cone metric spaces, Topol. Appl., 196
(2015), 641-651.
\bibitem{liul} L. Liu, Z. Tang and S. Lin, Statistically sequential spaces and statistically sequential
quotient mappings, Appl. Math. J. Chin. Univ., 30 (4) (2015), 485-493 (in Chinese).

\bibitem{Lin2} S. Lin and P. Yan, Sequence-covering maps of metric spaces, Topology Appl., 109 (2001), 301-314.

\bibitem{Lin3} S. Lin, A note on sequence-covering mappings, Acta Math. Hungar., 107 (3) (2005), 187-191.
\bibitem{Lin1} S. Lin and Z.Q. Yun, Generalized Metric Spaces and Mappings, Atlantis Studies in Mathematics, vol. 6, Atlantis Press, Paris (2016).
\bibitem{CS} M. Mac\^ aj, M. Sleziak, $\mathcal{I}^{\mathcal{K}}$-convergence, Real Anal. Exchange, 36 (1) (2010-2011), 177--194.
\bibitem{Michael} A.E. Michael, A quintuple quotient quest, Gen. Topol. Appl., 2 (1972), 91-138.
\bibitem{skpal} S.K. Pal, $\ic\mbox{-}$sequential topological spaces, Appl. Math. E Not. 14 (2014), 236-241.

\bibitem{renukadevi} V. Renukadevi and B. Prakash, $\ic\mbox{-}$Fr\'{e}chet\mbox{-}Uryshon spaces, Math. Morav., 20 (2) (2016), 87-97.


\bibitem{steinhaus} H. Steinhaus, Sur la convergence ordinaire et la convergence asymptotique, Colloq. Math., 2 (1951), 73-74.

\bibitem{tang} Z. Tang and F. Lin, Statistical versions of sequential and Fr\'{e}chet-Urysohn spaces, Adv. Math. (China), 44 (2015), 945-954.

\bibitem{Zhou1} X.G. Zhou and M. Zhang, More about the kernel convergence and the ideal convergence, Acta Math. Sin. Engl. Ser. 29 (2013), 2367-2372.
\bibitem{Zhou2} X. Zhou, L. Liu and S. Lin, On topological spaces defined by $\ic\mbox{-}$convergence, Bull. Iranian Math. Soc., (Accepted), July 2019.

\end{thebibliography}
\end{document}